\let\oldtocsection=\tocsection
\let\oldtocsubsection=\tocsubsection
\let\oldtocsubsubsection=\tocsubsubsection
\renewcommand{\tocsection}[2]{\hspace{0em}\oldtocsection{#1}{#2}}
\renewcommand{\tocsubsection}[2]{\hspace{1em}\oldtocsubsection{#1}{#2}}
\renewcommand{\tocsubsubsection}[2]{\hspace{2em}\oldtocsubsubsection{#1}{#2}}
\newcounter{notes}
\def\a{\alpha}
\def\b{\beta}
\def\g{\gamma}
\def\l{\lambda}
\newcommand{\N}{\mathbb{N}}
\renewcommand{\to}{\longrightarrow}
\def\co{\colon\thinspace}
\newcommand{\cT}{\mathcal{T}}
\newcommand{\C}{\mathbb{C}}
\newcommand{\HH}{\mathbb{H}}
\newcommand{\R}{\mathbb{R}}
\newcommand{\PSL}{\mathrm{PSL}}
\newcommand{\ML}{\mathrm{ML}}
\newcommand{\PGL}{\mathrm{PGL}}
\newcommand{\Sc}{\mathcal{S}}
\newtheorem{Theorem}{Theorem}[section]
\newtheorem{Lemma}[Theorem]{Lemma}
\newtheorem{Proposition}[Theorem]{Proposition}
\newtheorem{Corollary}[Theorem]{Corollary}
\newtheorem{Remark}[Theorem]{Remark}
\newtheorem{introthm}{Theorem}
\begin{document}

\title{On type-preserving representations of the thrice punctured projective plane group}

\author{Sara Maloni}
\address{Department of Mathematics, University of Virginia}
\email{sm4cw@virginia.edu}
\urladdr{www.people.virginia.edu/$\sim$sm4cw}

\author{Fr\'{e}d\'{e}ric Palesi}
\address{Aix Marseille Universit\'{e}, CNRS, Centrale Marseille, I2M, UMR 7373, 13453 Marseille, France}
\email{frederic.palesi@univ-amu.fr}
\urladdr{www.latp.univ-mrs.fr/$\sim$fpalesi}

\author{Tian Yang}
\address{Department of Mathematics, Texas A\&M University}
\email{tianyang@math.tamu.edu}
\urladdr{www.math.tamu.edu/$\sim$tianyang}

\thanks{The first author was partially supported by the National Science Foundation grants DMS 1506920 and DMS 1650811. The third author is supported by the National Science Foundation grants DMS 1405066 and DMS 1812008. Our collaboration was greatly facilitated by support from the GEAR network (U.S. National Science Foundation grants DMS 1107452, 1107263, 1107367 ``RNMS: GEometric structures And Representation varieties'').}

\begin{abstract}
In this paper we consider type-preserving representations of the fundamental group of the three--holed projective plane into $\mathrm{PGL}(2, \R) =\mathrm{Isom}(\HH^2)$ and study the connected components with non-maximal euler class. We show that in euler class zero for all such representations there is a one simple closed curve which is non-hyperbolic, while in euler class $\pm 1$ we show that there are $6$ components where all the simple closed curves are sent to hyperbolic elements and $2$ components where there are simple closed curves sent to non-hyperbolic elements. This answer a question asked by Brian Bowditch. In addition, we show also that in most of these components the action of the mapping class group on these non-maximal component is ergodic. In this work, we use an extension of Kashaev's theory of decorated character varieties to the context of non-orientable surfaces. 
\end{abstract}

\maketitle

\setcounter{tocdepth}{2}
\tableofcontents

\section{Motivations and statements of results} \label{intro}

In this paper we study type-preserving representations of a (possibly non-orientable) punctured surface $S$ into $\rm{Isom} (\HH) = \PGL(2,\R),$ the group of isometries of the hyperbolic space $\HH^2.$ A representation $\rho\co\pi_1(S)\to \PGL(2, \R)$ is said \textit{type-preserving} if peripheral elements are mapped to parabolic isometries and $1$--sided [resp. $2$--sided] elements are mapped to orientation reversing [resp. preserving] isometries. (Recall that an element of $\pi_1 (S)$ is called $2$--sided if it is represented by a curve admitting an orientable regular neighborhood, and $1$--sided otherwise. For an orientable surface, all curves are $2$-sided.) The $\PGL(2,\R)$-character variety of $S$ is the geometric invariant theory quotient $\{\mbox{type-preserving } \rho \co \pi_1 (S) \to \PGL (2 , \R)\} / \PGL (2 , \R),$ and roughly speaking is the space of type-preserving representations, up to conjugation by $\PGL(2, \R)$. In this paper we will work on the space $\mathfrak{X}(S) = \{ \mbox{type-preserving } \rho \co \pi_1 (S) \to \PGL (2 , \R) \} / \PSL (2 , \R)$ of  representation up to conjugation by $\PSL (2 , \R).$  One can obtain the $\PGL(2,\R)$--character variety of $S$ from $\mathfrak{X}(S)$ as a further quotient, which identifies certain connected components, but still maintains their geometrical dichotomy, as discussed in Section \ref{geom}. The reason to consider representations up to $\PSL (2 , \R)$ is because this will allow a unified treatment with the well known case of representations of orientable punctured surfaces in $\PSL(2, \R)$. In particular, the notion of euler classes and the lengths coordinates  extend naturally from $\PSL(2,\R)$--character varieties to $\mathfrak{X}(S).$ We are interested in questions of Kashaev on the number of connected components of $\mathfrak{X}(S)$, of Bowditch on the existence of representations such that the image of all $2$--sided simple closed curves is hyperbolic but which are not discrete and faithful and of Goldman on the ergodicity of the action of the mapping class group $\mathrm{Mod}(S)$ on $\mathfrak{X}.$ We will answer these three questions in the case of the thrice-punctured projective plane $N_{1,3}$ respectively  in Theorem \ref{conncomp}, Theorem \ref{eul_pm1} and Theorem \ref{ergodicity}.

The main tool we use are length coordinates for the decorated character variety. Originally, the length coordinates were defined on the decorated Teichm\"uller space of orientable punctured surfaces by Penner\,\cite{Penner} and generalized by Kashaev\,\cite{kas_coor} to the entire character space of type-preserving representations of orientable surfaces. In this paper, we generalized the coordinates to the case of non-orientable surfaces, and found a formula of computing the trace of curves in terms of this coordinates, following ideas of Roger-Yang\,\cite{roger-yang}. Another main tool in the proofs above is the choice of a good ideal triangulation of the surface, which we call balanced triangulations, and were introduced and used by Huang--Norbury\,\cite{hua_sim} and by the first and the second authors in \cite{mal_ont2}.

In the following paragraphs we will see many instances of a dichotomy between closed and punctured surfaces. On the other hand, orientable and non-orientable surfaces seem to behave similarly, but this is not always the case. As an example we want to discuss the growth of simple closed curves. A famous result of Mirzakhani\,\cite{mir_gro} shows that for an orientable surface $S$ the number of closed geodesics of a given topological type of length less than $L$ is asymptotically equivalent to a positive constant times $L^{\mathrm{dim}(\ML(S))}$, where $\ML(S)$ is the space of measured laminations of $S.$ In \cite{gen_pay, hua_sim, gam_ana, mag_cou} it was proved that this is not true any more for non-orientable surfaces, and the difference comes from the different dynamics of the action of $\mathrm{Mod}(S)$ on $\ML(S)$, as Gendulphe\,\cite{gen_wha} explains. 

\subsection{Number of connected components}

For each type-preserving representation $\rho$, one can define its Euler class $e(\rho)$ as its representation area divided by $2\pi$, see Section \ref{sec:euler} for a more detailed discussion. It is know that the Euler class satisfies the Milnor-Wood inequality $\chi(\Sigma_{g,n})\leqslant e(\rho)\leqslant -\chi(S)$, see \cite{mil_ont, wood_bundles}, and Goldman\,\cite{goldman:topological} (in the orientable case) and Palesi\,\cite{palesi_connected} (in the non-orientable case) proved that the equality holds if and only if $\rho$ is Fuchsian, that is, discrete and faithful. If we consider closed surfaces, Goldman\,\cite{goldman:topological} proved that the Euler class defines a one-to-one correspondence between the connected components of $\mathfrak{X}(\Sigma_g)$ and the integers $e$ with $|e|\leqslant -\chi(\Sigma_g)$, and Palesi\,\cite{palesi_connected} extended the result to the case of closed non-orientable surfaces $N_k.$  

For a punctured orientable surface $\Sigma_{g,n},$ the number of connected components of $\mathfrak{X}(\Sigma_{g,n})$ is more subtle to describe since for an integer $e$ with $|e|\leqslant -\chi(S),$ the spaces $\mathfrak X_e(S)$ of (conjugacy classes of) type-preserving representations of Euler class $e$ can either be empty or non-connected, see \cite{kas_coor}. In the orientable case Kashaev\,\cite{kas_coor} conjectured that it should be determined by the Euler class and an extra invariant which corresponds to the $\PSL(2,\R)$--conjugacy classes of the holonomy representations of the boundary elements. More precisely, a parabolic element in $\PSL(2,\R)$ is, up to $\pm I$, conjugate to an upper-triangular matrix, and its conjugacy class is distinguished by whether the the sign of the nonzero off diagonal element is positive or negative. We respectively call the two conjugacy classes of parabolic elements the positive and the negative conjugacy classes. For a type-preserving $\rho\co\pi_1 (S) \rightarrow \PGL(2,\R),$ we say that the \emph{sign} of a puncture $v$ is  \emph{positive} (resp. \emph{negative}), denoted by $s(v)=+1$ (resp. $s(v)=-1$), if $\rho$ sends a peripheral element around this puncture into a positive (resp. negative) conjugacy class of parabolic elements. For $s\in\{\pm 1\}^n,$ we denote by $\mathfrak{X}^s_e(S)$ the space of conjugacy classes of type-preserving representations with Euler class $e$ and signs of the punctures $s.$ It is conjectured in \cite{kas_coor} that each $\mathfrak{X}^s_e(S)$ is either empty or connected. The third author\,\cite{yan_ont} proved this conjecture in the case of the four holed sphere $\Sigma_{0,4}$ and in this article we prove it in the case of the thrice-punctured projective plane.

\begin{introthm}\label{conncomp}
  Let $s \in \{\pm 1\}^3.$
  \begin{enumerate}
    \item $\mathfrak{X}_0^s(N_{1, 3})$ is nonempty if and only if $s$ contains exactly one or two $+1$'s.
    \item $\mathfrak{X}_{+ 1}^s(N_{1, 3})$ is nonempty if and only if $s$ contains exactly two or three $+1$'s.
    \item $\mathfrak{X}_{- 1}^s(N_{1, 3})$ is nonempty if and only if  $s$ contains exactly two or three $-1$'s.
    \item All the nonempty spaces above are connected.
  \end{enumerate}
\end{introthm}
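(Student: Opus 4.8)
The plan is to push everything through the explicit length coordinates attached to a fixed balanced ideal triangulation $\cT$ of $N_{1,3}$. Since $\chi(N_{1,3}) = -2$, such a triangulation has $4$ ideal triangles and $6$ edges, so (an open dense part of) the decorated character variety is parametrized by a tuple $(t_1,\dots,t_6)\in\R^6$ of real edge coordinates, and $\mathfrak{X}(N_{1,3})$ is recovered as the quotient by the $3$--dimensional decoration (horocycle) action, which changes neither the signs $s(v_i)$ nor the Euler class $e$. First I would use the Roger--Yang-type trace formula adapted to the cross-cap \cite{roger-yang} to write the holonomy of each peripheral curve $c_1,c_2,c_3$ as the ordered product of the elementary matrices attached to the corners of $\cT$ meeting the corresponding puncture. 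Type-preservation makes these holonomies parabolic ($\Tr=\pm2$), and the sign $s(v_i)$ is then the sign of the (nonzero) off-diagonal entry, which is an explicit polynomial $P_i(t_1,\dots,t_6)$. This produces three sign functions $s(v_i)=\operatorname{sign}P_i$, locally constant on the type-preserving locus, whose walls are the degeneration hypersurfaces $\{P_i=0\}$.

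The crux is to compute the Euler class $e(\rho)$ in the same coordinates. I would express $e$ as the representation area divided by $2\pi$ (Section \ref{sec:euler}) and evaluate it as a sum of local contributions, one per ideal triangle, read off from the cyclic configuration of the decorated ideal vertices. The orientation-reversing generator coming from the cross-cap forces a bookkeeping of signs that is absent in the orientable theory, and getting this combinatorial area formula correct is the main obstacle. The expected outcome is the relation
\begin{equation*}
2\,e(\rho) \;=\; s(v_1)+s(v_2)+s(v_3)\;\pm\;1,
\end{equation*}
valid away from the maximal (Fuchsian) locus $e=\pm2$. Writing $p$ for the number of $+1$'s among the $s(v_i)$, so that $s(v_1)+s(v_2)+s(v_3)=2p-3$, this relation is equivalent to $p\in\{e+1,\,e+2\}$, which is precisely the list of nonemptiness conditions in items (1)--(3): the excluded sign patterns violate it outright, while for each admissible pair $(e,s)$ I would exhibit an explicit tuple $(t_1,\dots,t_6)$ realizing the required branch of the $\pm$, thereby proving nonemptiness.

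For the connectedness in item (4), fix an admissible pair $(e,s)$; the corresponding subspace of $\mathfrak{X}(N_{1,3})$ is the union of those chambers of the complement of the singular locus $\bigcup_i\{P_i=0\}$ on which $\operatorname{sign}P_i=s(v_i)$ and the locally constant function $e(\rho)$ takes the prescribed value. I would show this union is connected by producing an explicit connected parametrization: after imposing the sign conditions and solving the area constraint, the balanced property of $\cT$ keeps the polynomials $P_i$ and the triangle contributions to $e$ simple enough that the remaining coordinates sweep out a cell, and since the decoration orbits are connected this descends to connectedness of the stratum in $\mathfrak{X}(N_{1,3})$. The delicate point, alongside the area formula itself, is to verify that $\{e=\mathrm{const}\}$ does not split the chosen sign chamber into several pieces; I would settle this by a direct analysis of how $e(\rho)$ jumps across the walls of the singular locus, showing that within a fixed sign chamber the level sets of $e$ remain connected.
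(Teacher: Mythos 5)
Your nonemptiness argument (items (1)--(3)) is broadly on the right track and parallels the paper: the paper's Lemma \ref{cusp} gives exactly your polynomials $P_i$, which in the projectivized triangle coordinates are signed linear forms $\pm X_1\pm X_2\pm X_3\pm X_4$ with signs given by permutations of $\epsilon\in\{\pm1\}^T$, and your ``expected'' relation $2e(\rho)=\sum_i s(v_i)\pm 1$ is correct and follows from a finite sign check once one knows $e(\rho)=\frac12\sum_{t}\epsilon(t)$, i.e.\ formula \eqref{euler_sign}. But the combinatorial area formula you flag as the main obstacle is not something you can leave as ``expected outcome'': the paper proves it by passing to the orientation double cover and showing the two lifts of each triangle carry the same sign, which is precisely the cross-cap bookkeeping you defer. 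Note also that your chart is wrong as stated: Kashaev's coordinates are $(\lambda,\epsilon)\in(\R_{>0})^E\times\{\pm1\}^T$, not a tuple in $\R^6$; without the discrete sign vector $\epsilon$ the Euler class is invisible in the edge coordinates, and your plan to recover it from per-triangle area contributions is exactly the reconstruction of $\epsilon$.

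The genuine gap is in item (4). In a fixed triangulation chart $\mathfrak{X}_{\cT}(N_{1,3})$, the locus with prescribed $(e,s)$ is a \emph{disjoint} union of chambers labelled by \emph{different} discrete sign vectors: for instance $\mathfrak{X}_0^{s_1^+}\cap\mathfrak{X}_{\cT}$ corresponds to $(\Delta^{1,2,-}\times\{\epsilon_{1,2}\})\sqcup(\Delta^{3,4,-}\times\{\epsilon_{3,4}\})$. Since $\epsilon$ is locally constant on the chart, these chambers cannot be joined inside it, and your proposed wall analysis cannot help: the walls $\{P_i=0\}$ consist of representations that are not type-preserving (a peripheral holonomy degenerates to the identity), so no path in $\mathfrak{X}_e^s$ crosses them, and $e$ does not ``jump'' across them in any usable sense --- within one chart both $e$ and $s$ are constant on each chamber, so the question you propose to settle (whether $\{e=\mathrm{const}\}$ splits a sign chamber) is vacuous. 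Connecting the two chambers forces you to leave the chart, i.e.\ to pass through representations for which $\cT$ is \emph{not} admissible, and this is the mechanism your proposal never invokes. The paper does it with a triangle switch: the explicit coordinate change of Lemma \ref{ch_e0} (projectively $X_l\mapsto (X_i+X_j-X_k)^2/X_l$ with the stated sign rule) maps points of both chambers into the single connected chamber $\Delta^{1,2,-}\times\{\epsilon_{1,2}\}$ for the switched triangulation $\cT'$, a path is drawn there, and its image under the switch connects the original points inside $\mathfrak{X}_0^{s_1^+}(N_{1,3})$. Without this change-of-triangulation step (or an equivalent gluing across charts), your connectedness argument fails for every one of the components in question.
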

As a consequence, $\mathfrak{X}_{0}(N_{1, 3})$ has six connected components, while $\mathfrak{X}_{+ 1}(N_{1, 3})$ and $\mathfrak{X}_{- 1}(N_{1, 3})$ each has four connected components. A surprising fact that we will discuss below is that different connected components will have different geometric properties.

\subsection{Hyperbolicity of simple closed curves}\label{geom}

Bowditch\,\cite{bow_mar} asked the following question: \textit{Given a non-elementary type-preserving representation $\rho\co\pi_1(S)\to \PGL(2, \R)$, is it true that if $\rho$ sends every non-peripheral $2$--sided simple closed curve to an hyperbolic element of $\PSL(2,\R)$, then $\rho$ is Fuchsian?} (Recall that a representation $\rho$ is called {\em non-elementary} if its image is Zariski-dense in $\PGL(2,\R).$) March\'e and Wolff\,\cite{mar-wol} answered affirmatively for the genus $2$ surface $\Sigma_2$, but it was proven in \cite{mal_ont} and \cite{yan_ont} that answer is no, in some cases for the four-times punctured sphere $\Sigma_{0,4}.$ Our next result discusses exactly for which components of $\mathfrak{X}(N_{1, 3})$ the answer is yes, giving a complete answer to Bowditch's question in this case.
\begin{introthm}\label{eul_pm1}\noindent
\begin{enumerate}
\item If $s \in \{\pm 1\}^3$ contains exactly two $+1$'s, then every type-preserving $\rho$ in a full measure subset of $\mathfrak X_1^s(N_{1,3})$ sends every non-peripheral simple closed curve to a hyperbolic element.
\item  If $s \in \{\pm 1\}^3$ contains exactly two $-1$'s, then every type-preserving $\rho$ in a full measure subset of  $\mathfrak X_{-1}^s(N_{1,3})$ sends every non-peripheral simple closed curve to a hyperbolic element.
\item  Let $s_+=(+1,+1,+1)$ and let $ s_-=(-1,-1,-1).$ Then every representation in $\mathfrak{X}_{1}^{s+}(N_{1, 3})$ and $\mathfrak{X}_{-1}^{s_-}(N_{1, 3})$ sends some non-peripheral $2$-sided simple closed curve to a non-hyperbolic element. 
\item Every non-elementary type-preserving representation  $\rho\co \Gamma_{1,3} \to \PGL(2,\R)$ with relative Euler class $e(\rho)= 0$ sends some non-peripheral simple closed curve to a non-hyperbolic element.
\end{enumerate}
 \end{introthm}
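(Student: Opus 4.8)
The plan is to fix a balanced ideal triangulation $\mathcal T$ of $N_{1,3}$ and to work throughout in the associated length coordinates on the decorated character variety, expressing the trace of every relevant curve as an explicit function of these coordinates via the Roger--Yang-type trace formula. A preliminary step, carried out once and for all, is to enumerate the isotopy classes of non-peripheral simple closed curves: up to the action of $\mathrm{Mod}(N_{1,3})$ there are only finitely many topological types (the one-sided core $\mu$ of the crosscap, its two-sided boundary $\mu^2$, and the two-sided curves cutting off a pair of punctures or a Möbius band together with a puncture), and I would record a representative and its trace function for each. I would also exploit the orientation-reversing involution of $\HH^2$, which on $\mathfrak{X}(N_{1,3})$ induces a measure-preserving homeomorphism sending $\mathfrak{X}_e^s$ to $\mathfrak{X}_{-e}^{-s}$; this reduces (2) to (1) and the $s_-$-case of (3) to the $s_+$-case, so it suffices to treat $e\ge 0$ and positive-majority signs.

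For (1), and hence (2), I would show that the all-hyperbolic locus has full measure by combining a positivity estimate with invariance. Set $A=\bigcap_c\{\rho\co |\Tr\rho(c)|>2\}$, the intersection over all non-peripheral simple closed curves $c$; this is a $G_\delta$, hence measurable, and it is invariant under $\mathrm{Mod}(N_{1,3})$ since a mapping class permutes the curves $c$ and preserves hyperbolicity of their images. The substantive input is to exhibit a nonempty open region in the length coordinates of the two-positive-sign component $\mathfrak{X}_1^s(N_{1,3})$ (nonempty and connected by Theorem \ref{conncomp}) on which every trace function from the finite list, and then by the non-negativity of the coefficients in the trace formula every simple closed curve trace, exceeds $2$ in absolute value; this gives $A$ positive measure. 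Ergodicity of the $\mathrm{Mod}(N_{1,3})$-action on this component (Theorem \ref{ergodicity}, which applies to precisely these components) then forces $A$ to have full measure. The main obstacle here is the positivity/monotonicity analysis of the trace formula that promotes control of the finitely many generating curves to control of all curves at once.

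For (3), the key is that when all three signs agree the configuration is invariant under the symmetry permuting the punctures, and I would single out the distinguished two-sided simple closed curve $c_0$ fixed by this symmetry (the boundary $\mu^2$ of the crosscap neighbourhood, or the symmetric separating curve). Writing $\Tr\rho(c_0)$ through the trace formula and imposing the three parabolic (trace $\pm2$) boundary conditions together with $e(\rho)=1$, I would show by a direct identity that $|\Tr\rho(c_0)|\le 2$ for every $\rho$ in $\mathfrak{X}_1^{s_+}(N_{1,3})$, so $c_0$ is elliptic or parabolic throughout the component. Since this is a universal rather than generic statement, no measure-theoretic slack is available and the computation must be valid on the whole connected component.

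For (4), I would set up trace coordinates $x,y,z$ of the basic two-sided simple closed curves and derive the Markoff-type polynomial relation they satisfy, whose lower-order coefficients are pinned down by the parabolic boundary traces. The condition $e(\rho)=0$, which by Theorem \ref{conncomp} forces exactly one or two positive signs, selects a component of the real locus on which I would show the system $|x|,|y|,|z|>2$ is inconsistent for non-elementary $\rho$, so at least one basic curve is elliptic or parabolic; intuitively, the $e=0$ sign data forbid the uniformly large-trace regime that only a Fuchsian representation could realize, and Fuchsian representations occur solely at the extremal Euler class $\pm2$. I expect parts (3) and (4) to be the hardest, precisely because they are universal over the whole component and therefore rest on exact trace identities rather than generic vanishing; the delicate points are the uniform choice of the non-hyperbolic curve and the correct normalization of the boundary parabolics across the entire connected component.
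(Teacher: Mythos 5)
Your plan for parts (1)--(2) rests on a step that is unavailable: you invoke ``Theorem \ref{ergodicity}, which applies to precisely these components,'' but Theorem \ref{ergodicity} is proved only for $\mathfrak{X}_{1}^{s_+}$, $\mathfrak{X}_{-1}^{s_-}$ and the components of $\mathfrak{X}_0$ --- the paper explicitly states that ergodicity on the components of $\mathfrak X_{\pm 1}$ with mixed signs (exactly two $+1$'s, resp.\ two $-1$'s), which are exactly the components in (1)--(2), is \emph{open}. So the ``positive measure $+$ invariance $+$ ergodicity $\Rightarrow$ full measure'' mechanism collapses. Your auxiliary step is also unfounded: the coefficients in the trace formula of Theorem \ref{SunYang} are the signs $\epsilon(t)\in\{\pm 1\}$, and on these non-maximal components some signs are $-1$, so there is no ``non-negativity of the coefficients'' promoting control of finitely many curves to all curves. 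The paper's actual mechanism is different and avoids ergodicity entirely: Proposition \ref{uncountable} shows (via a countable union of zero sets of Laurent polynomials, using that the balanced triangulation graph is a countable tree) that for a full-measure set of $\rho$ \emph{every} balanced triangulation is $\rho$--admissible; the sign data forces the triangle coordinates to violate the generalized triangle inequalities \eqref{GTI}; Lemma \ref{curves_e1} shows violation of \eqref{GTI} makes all six edge-curves of that triangulation hyperbolic; triangle switches preserve $(e,s)$ and hence preserve the violation; and every non-peripheral $2$--sided simple closed curve is the curve associated to an edge of \emph{some} balanced triangulation. Invariance of the coordinate inequality under the moves, not ergodicity of the action, is what globalizes the estimate.

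Parts (3)--(4) contain a second, independent gap: in both you posit a \emph{fixed} finite witness (a single symmetry-distinguished curve $c_0$ in (3); the three basic trace coordinates $x,y,z$ in (4)) that is non-hyperbolic uniformly over the whole component. No such uniform witness exists, and the paper's proofs are structured around exactly this difficulty: the non-hyperbolic curve depends on $\rho$, and is found by a \emph{trace reduction algorithm} --- repeatedly performing the triangle switch along the triangle with maximal coordinate $X_i$ --- which terminates either at a non-admissible triangulation (a parabolic edge-curve) or at coordinates satisfying \eqref{GTI} (case $e=\pm1$, Lemma \ref{curves_e1}) resp.\ \eqref{sqGTI} (case $e=0$, Lemma \ref{curves_e0}). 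The substance of the proof is the finite-termination lemma for this algorithm (a projectivized dynamical argument in the $e=\pm1$ case; a four-step monotonicity/limit argument for the normalized coordinates in the $e=0$ case), and nothing in your outline substitutes for it: your claimed identity $|\Tr\rho(c_0)|\le 2$ on all of $\mathfrak{X}_1^{s_+}$ is false (pushing forward a representation by a mapping class moves the non-hyperbolic curve off $c_0$), and in (4) representations exist for which all three basic curves are hyperbolic, so the inconsistency of $|x|,|y|,|z|>2$ cannot be derived from the Markoff-type relation alone; one must search through infinitely many balanced triangulations and prove the search stops. (Your symmetry reduction of (2) to (1) and of the $s_-$ to the $s_+$ case of (3) is fine and matches the paper's ``the other case is similar.'')
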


In particular, the representations in $\mathfrak{X}_{1}^{s+}(N_{1, 3})$, $\mathfrak{X}_{-1}^{s_-}(N_{1, 3})$ and $\mathfrak{X}_{0}(N_{1, 3})$ are not Fuchsian, so Theorem \ref{eul_pm1} gives a negative answer to Bowditch's question for these eight components. On all the other components, the answer to Bowditch's question is affirmative.

\subsection{Ergodicity of the mapping class group action}

The pure (extended) mapping class group $\mathrm{Mod}(S)$ is the group of isotopy-classes of homeomorphisms of $S$ fixing the boundary components point-wise. It naturally acts on $\mathfrak X(N_{k,n})$ preserving the Euler class $e$ and the sign of the boundary holonomy $s.$ In the case of closed oriented surfaces Goldman\,\cite{goldman-survey} conjectured that this action is ergodic on each non-extremal and non-zero component. March\'e and Wolff\,\cite{mar-wol} proved that a positive answer to Bowditch's question implies Goldman conjecture and used this to prove Goldman conjecture for $\Sigma_2.$ In the case of punctured surfaces, since Bowditch's Conjecture is no longer true for all the connected components, the proof of Goldman's result is more difficult.  The third author\,\cite{yan_ont} proved it that for the four-times punctured sphere and in this article we prove it in most cases of the thrice-punctured projective plane.

\begin{introthm}\label{ergodicity}\noindent
  \begin{enumerate}
    \item The mapping class group $\mathrm{Mod}(N_{1,3})$ acts ergodically on the connected component $\mathfrak{X}_{1}^{s+}(N_{1, 3}).$ 
    \item The mapping class group $\mathrm{Mod}(N_{1,3})$ acts ergodically on the connected component $\mathfrak{X}_{-1}^{s-}(N_{1, 3}).$  
    \item The mapping class group $\mathrm{Mod}(N_{1,3})$ acts ergodically on every connected component of $\mathfrak{X}_{0}(N_{1, 3}).$  
  \end{enumerate}
\end{introthm}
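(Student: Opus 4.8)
The plan is to exploit a natural two-sided curve together with the length coordinates of the previous sections, reducing the non-orientable problem to the orientable four-holed sphere treated by the third author in \cite{yan_ont}. Since the three cases of the statement are exactly the components on which Bowditch's question has a negative answer (Theorem \ref{eul_pm1}), the implication of March\'e--Wolff \cite{mar-wol} (positive Bowditch answer $\Rightarrow$ ergodicity) is unavailable, so a direct argument is required. I would begin by fixing a two-sided simple closed curve $\gamma$ bounding a M\"obius band $M$, namely the boundary of a regular neighborhood of the crosscap, so that $\gamma$ is homotopic to the square of the one-sided core $\mu$ and the complement of $M$ is a four-holed sphere $\Sigma_{0,4}$ whose four boundary components are the three punctures of $N_{1,3}$ together with $\gamma$. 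The length $\ell_\gamma$ (equivalently $\Tr\rho(\gamma)$) then defines a measurable map from each component $\mathfrak{X}_e^s(N_{1,3})$ onto an interval, whose fibers I will call \emph{slices}.

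The first substantive step is to identify the slice $\{\ell_\gamma=L\}$ with a component of the relative $\PGL(2,\R)$--character variety of $\Sigma_{0,4}$ in which the three punctures are parabolic with signs $s$ and the boundary $\gamma$ has the trace prescribed by $L$. Restriction to $\pi_1(\Sigma_{0,4})$ loses no information, because the M\"obius gluing is recovered from $\gamma=\mu^2$: the orientation-reversing square root of the hyperbolic element $\rho(\gamma)$, namely the glide reflection along the same axis with half the translation length, is unique. (Note that $\gamma$, being the square of the orientation-reversing generator, is hyperbolic off a measure-zero set, since an orientation-reversing isometry of $\HH^2$ is a reflection only on a codimension-one locus; hence the identification is valid almost everywhere.) Using the length coordinates and the trace formula to make this dictionary explicit, the subgroup $\mathrm{Mod}(\Sigma_{0,4})<\mathrm{Mod}(N_{1,3})$ of classes supported away from $M$ acts on each slice preserving $\ell_\gamma$, and I would transport the ergodicity theorem for $\Sigma_{0,4}$ of \cite{yan_ont} — extended to allow one boundary of fixed hyperbolic trace — to conclude that this action is ergodic on almost every slice. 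It follows that any $\mathrm{Mod}(N_{1,3})$--invariant measurable function $f$ is almost everywhere constant along slices, so $f=\bar f(\ell_\gamma)$ for some measurable $\bar f$.

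To pass from slice-constancy to global constancy I would introduce a second two-sided curve $\gamma'$, also bounding a M\"obius band containing the crosscap but in a distinct isotopy class, obtained from $\gamma$ by a crosscap slide $Y\in\mathrm{Mod}(N_{1,3})$. Cutting along $\gamma'$ yields another embedded $\Sigma_{0,4}$, and the associated copy of $\mathrm{Mod}(\Sigma_{0,4})$ inside $\mathrm{Mod}(N_{1,3})$ also preserves $f$; the same argument gives $f=\tilde f(\ell_{\gamma'})$ almost everywhere. The final step is to verify that the joint length map $(\ell_\gamma,\ell_{\gamma'})\co\mathfrak{X}_e^s(N_{1,3})\to\R^2$ has image of positive two-dimensional measure, i.e.\ that $\ell_\gamma$ and $\ell_{\gamma'}$ are functionally independent (which is where the non-triviality of the crosscap slide enters, since $\ell_{\gamma'}=\ell_\gamma\circ Y^{-1}\neq\ell_\gamma$). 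Granting this, the relation $\bar f(\ell_\gamma)=\tilde f(\ell_{\gamma'})$, holding on a two-dimensional set of values, forces $\bar f$ and $\tilde f$ to be constant, so $f$ is constant and the action is ergodic. The Euler-class and sign data enter only as bookkeeping, selecting which component of the $\Sigma_{0,4}$ character variety each slice represents and thereby separating cases (1), (2) and the six components of (3).

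The main obstacle I anticipate is the genuinely non-orientable input concentrated in the last two steps: exhibiting the curve $\gamma'$ via an explicit crosscap slide and computing its action on the length coordinates, and then proving that $\ell_{\gamma'}$ is non-constant on almost every $\gamma$-slice (equivalently, the functional independence above). This has no orientable analogue and must be carried out by hand in the coordinates of the earlier sections. A secondary technical point is the promised extension of the $\Sigma_{0,4}$ ergodicity statement of \cite{yan_ont} from four parabolic boundaries to three parabolic boundaries plus one boundary of fixed hyperbolic trace; I expect this to follow from the same cubic-surface Vieta-involution dynamics once the length-coordinate dictionary is in place, but it must be checked, and the zero-Euler-class components will demand the most care since their admissible sign patterns are the most constrained.
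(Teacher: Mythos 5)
Your architecture has a genuine gap at its load-bearing step: the ``extension'' of Yang's ergodicity theorem from four parabolic punctures to three parabolic punctures plus one boundary of fixed hyperbolic trace is not a routine check, and it cannot hold unconditionally. Once the boundary trace of $\gamma$ leaves $\{\pm 2\}$, your slices are relative character varieties of $\Sigma_{0,4}$ of exactly the kind studied in \cite{mal_ont}, where for large ranges of boundary traces there exist open sets of Bowditch-type representations (every essential simple closed curve hyperbolic, with a uniform trace bound) on which the mapping class group acts properly discontinuously --- so ergodicity on a slice can simply fail. To rescue the reduction you would have to show that, for almost every slice arising in the components of Theorem \ref{ergodicity}, the restricted representation of $\Sigma_{0,4}$ sends some essential simple closed curve \emph{of the subsurface} to a non-hyperbolic element; Theorem \ref{eul_pm1} does not give you this, since the non-hyperbolic curve it produces depends on the representation and need not lie in the fixed complement of your M\"obius band, nor is it stable under the slice-preserving subgroup. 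Proving such a relative statement amounts to rerunning the trace-reduction analysis inside the subsurface and then redoing the twist dynamics there --- i.e.\ the hard content of the theorem has been deferred to the unproven lemma, and the detour through slices saves nothing. A secondary but concrete error: the crosscap slide $Y$ does not exist in $\mathrm{Mod}(N_{1,3})$, since $N_{1,3}$ has a single crosscap and crosscap slides require genus at least $2$; this is fixable (take $\gamma'=\phi(\gamma)$ for $\phi$ a Dehn twist along a $2$--sided curve meeting the core $\mu$), but as written the second foliation is not produced. Your local setup (uniqueness of the orientation-reversing square root of the hyperbolic element $\rho(\gamma)$, hence injectivity of restriction off a null set, and the Fubini/density-point argument upgrading two slice-constancies to constancy) is sound, but it is downstream of the missing input.

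For comparison, the paper argues directly on $\mathfrak{X}(N_{1,3})$ with no fibration over $\Tr\rho(\gamma)$: it uses Theorems \ref{eul_pm1} and \ref{thm_eul_0} to locate, through almost every point, a $2$--sided simple closed curve with elliptic image, observes in explicit coordinates (triangle coordinates in the case $e=0$, lifted trace coordinates $(ia,ib,ic,id,2,2,2)$ in the case $e=\pm 1$) that the Dehn twist along such a curve preserves an ellipse on which it acts as a rotation by an angle that is irrational for almost every parameter, and then produces three such invariant ellipses through almost every point whose gradients span the tangent space, forcing any invariant measurable function to be almost everywhere constant. This rotation-on-invariant-conics mechanism is precisely what your proposal would ultimately need inside each slice; the paper's version runs it once, globally, and thereby avoids both the relative-ergodicity lemma and the Bowditch-domain obstruction that block your reduction.
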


The ergodicity of the action of $\mathrm{Mod}(N_{1,3})$ on the components $\mathfrak{X}_{1}^{s}(N_{1, 3})$ with $s \in \{\pm 1\}^3$ containing exactly two $+1$'s,  and on the components $\mathfrak{X}_{-1}^{s_-}(N_{1, 3})$ with $s \in \{\pm 1\}^3$ containing exactly two $-1$'s, is still unknown and deserves further study. We tend to believe that the action is still ergodic.

\subsection{Domination of representations} 

As a side result, we also show the following result about domination of representations, an analog of the results of Gueritaud-Kassel-Wolff\,\cite{G-K-W} and Deroin-Tholozan\,\cite{deroin-tholozan} for orientable closed surfaces and the third author\,\cite{yan_ont} for orientable punctured surfaces. Recall that a representation $\rho$ is said to be \textit{dominated} by another representation $\rho'$ if the traces of the simple closed curves of $\rho$ are less than or equal to those of $\rho'$, in absolute value.

\begin{introthm}\label{domination}
Given a non-orientable punctured surface $N_{k, n}$, every non-Fuchsian type-preserving representation is dominated by a Fuchsian one and almost every Fuchsian representation dominates at least one representation with Euler class $e$ such that $|e|<-\chi(N_{k,n}).$
\end{introthm}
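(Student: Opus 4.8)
The plan is to derive both assertions from the trace formula in length coordinates, together with the fact that the Fuchsian locus is precisely the locus where these coordinates all carry a distinguished sign and where the trace functions are positive. After fixing a (balanced) ideal triangulation $\mathcal{T}$ of $N_{k,n}$, a type-preserving representation $\rho$ is recorded by a tuple of real length coordinates $\{z_e\}_{e\in\mathcal{T}}$, and the trace formula established earlier expresses the trace of any simple closed curve $\gamma$ as a signed sum of monomials
\[
\Tr(\rho(\gamma))=\sum_{\alpha} c_\alpha \prod_{e} z_e^{\alpha_e},
\]
obtained by expanding the product of edge matrices read off along $\gamma$. The structural input I would use is that the coefficients $c_\alpha$ all share a common sign, so that in the Fuchsian case, where every $z_e$ is positive, no cancellation occurs and $|\Tr|$ attains the value $\sum_\alpha |c_\alpha|\prod_e |z_e|^{\alpha_e}$.

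For the first assertion, I would start from a non-Fuchsian type-preserving $\rho$ with coordinates $\{z_e\}$ and let $\rho'$ be the representation with coordinates $\{|z_e|\}$, which is Fuchsian by the sign characterization and type-preserving since that property is built into the coordinate description. Applying the trace formula to an arbitrary simple closed curve $\gamma$ and using the triangle inequality gives
\[
|\Tr(\rho(\gamma))|=\Bigl|\sum_\alpha c_\alpha\prod_e z_e^{\alpha_e}\Bigr|\le\sum_\alpha |c_\alpha|\prod_e |z_e|^{\alpha_e}=|\Tr(\rho'(\gamma))|,
\]
so $\rho$ is dominated by $\rho'$. In essence the inequality is a manifestation of Fock--Goncharov-type positivity of the trace functions, with the absolute-value representation realizing the maximum over all sign choices of the coordinates.

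For the second assertion I would run the same construction in reverse. Given a Fuchsian $\rho'$ with positive coordinates $\{z_e\}$, I would flip the signs of a suitable subset of the $z_e$ to obtain a type-preserving $\rho$ with the same absolute values but with relative Euler class satisfying $|e(\rho)|<-\chi(N_{k,n})$; the relation between the signs of the coordinates and the Euler class is exactly what distinguishes the non-maximal components produced in Theorem \ref{conncomp}, and it is what guarantees that such a sign flip lands in a nonempty non-maximal component. The same triangle-inequality estimate then shows that $\rho'$ dominates $\rho$, proving the claim.

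The main obstacle is twofold. First, one must establish the common-sign (positivity) property of the coefficients in the trace formula uniformly over all simple closed curves, since this is what makes the triangle inequality sharp on the Fuchsian locus; controlling the combinatorics of the edge-matrix product along an arbitrary simple closed curve, including the one-sided ones, is the delicate point, and it is here that the balanced triangulation is helpful. Second, in the second assertion the word \emph{almost} is essential: one must exclude the measure-zero set of Fuchsian representations for which some coordinate degenerates or every admissible sign flip either returns to the Fuchsian locus or fails to produce a nonempty non-maximal component with strictly smaller Euler class, and then check that outside this set a genuinely non-maximal dominated representation always exists.
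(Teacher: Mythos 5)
Your outline reproduces the paper's strategy in spirit (trace formula, triangle inequality, and a Fuchsian partner obtained by making all discrete signs positive while keeping the same continuous coordinates), but the coordinate framework you posit is wrong in a way that matters, and what you flag as your ``main obstacle'' is a sign of that. In Kashaev's coordinates as set up in Section \ref{back_kas}, the $\lambda$--lengths are positive by definition, $\lambda\in\R_{>0}^E$, and the sign data lives on the \emph{triangles}, $\epsilon\in\{\pm 1\}^T$: in Theorem \ref{SunYang} each monomial of $\Tr\left(M(t_{j_1})\cdots M(t_{j_m})\right)$ carries a coefficient which is a product of the triangle signs $\epsilon(t)$ of those triangles in which the curve picks up the off-diagonal entry $\epsilon(t)\lambda(e_3)$. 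Your ``real edge coordinates $z_e$, Fuchsian iff all $z_e>0$'' cannot encode this: an edge is the third side of \emph{two} adjacent triangles $t,t'$, which may satisfy $\epsilon(t)\neq\epsilon(t')$, so a single sign per edge has the wrong shape ($\{\pm 1\}^E$ versus $\{\pm 1\}^T$), and no parametrization by signed edge coordinates with constant-sign trace coefficients is established in the paper or needed. With the correct bookkeeping your obstacle disappears: given a non-Fuchsian $\rho$, the paper defines $\rho'$ by keeping the same positive $\lambda$--lengths and setting every $\epsilon(t)=+1$; then all monomial coefficients are $+1$, $\rho'$ is maximal by Formula \eqref{euler_sign} and hence Fuchsian by Palesi's theorem, and the triangle inequality gives the domination, strictly on curves crossing a triangle with $\epsilon(t)=-1$.

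Two further steps are genuinely missing. First, Theorem \ref{SunYang} applies only to $2$--sided curves, and on $N_{k,n}$ many simple closed curves are $1$--sided; your edge-matrix product along such a curve does not literally make sense, and the paper handles this case via the identity \eqref{square}, comparing the traces of $\rho(\gamma^2)$ and $\rho'(\gamma^2)$ instead. Second, a fixed triangulation need not be $\rho$--admissible for your given non-Fuchsian $\rho$: for the first assertion you must invoke Theorem \ref{thm_kash} to produce a $\rho$--admissible ideal triangulation (any ideal triangulation suffices; ``balanced'' is an $N_{1,3}$--specific notion that plays no role in Theorem \ref{domination}). For the second assertion your reading of ``almost every'' is essentially correct and matches the paper: one chooses $\epsilon$ with $\sum_t\epsilon(t)=2e$ (this requires the hypothesis $\mathfrak X_e(N_{k,n})\neq\emptyset$, which you should state), and the chart $\mathfrak{X}_{\cT}(\epsilon)$ corresponds to a full-measure open subset of $\R_{>0}^E$, so almost every Fuchsian $\lambda$ pairs with $\epsilon$ to give a type-preserving $\rho$ with $e(\rho)=e$ dominated by the Fuchsian $\rho'$ carrying the same $\lambda$--lengths.
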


\subsection{Organization of the paper}

In Section \ref{sec:back} we will discuss type-preserving representations of punctured surfaces and generalize Penner--Kashaev's theory of length coordinates and trace formulas to the case of non-orientable surfaces. At the end of the section we will prove Theorem \ref{domination}. In Section \ref{back_tri} we will discuss balanced triangulations of the thrice punctured projective plane $N_{1,3}$ and the triangle switches. In Section \ref{sec:conn} we will discuss connected components of the character variety $\mathfrak{X}(N_{1, 3})$ and prove Theorem \ref{conncomp}, while in Sections \ref{eul_1} and \ref{eul_0} we will discuss representations with Euler class $\pm 1$ and $0$, respectively, and prove Theorems \ref{eul_pm1}. Finally in Section \ref{sec:ergodicity}, we will discuss the ergodicity of action of the mapping class group on the non-maximal components of $\mathfrak{X}(N_{1, 3})$ and prove Theorem \ref{ergodicity}. 

\section{Type-preserving representations of punctured surfaces} \label{sec:back}

In this section we recall the necessary background on decorated representation spaces for type-preserving representations of punctured hyperbolizable surfaces, which was originally defined for orientable surfaces, and we extend it to the non-orientable setting.

\subsection{Euler class}\label{sec:euler}

We let a punctured hyperbolizable surface $S$ be either  an orientable surface $\Sigma_{g, n}$ of genus $g$ and $n$ punctures with $2-2g-n<0$ or a non-orientable surface $N_{k,n}$ of genus $k$ (meaning that it is the connected sum of $k$ crosscaps) and $n$ punctures with $2-k-n<0.$ Given a type-preserving representation $\rho\co \pi_1 (S) \to\PGL(2,\R)$, a {\em pseudo-developing map} associated to $\rho$ is a piecewise smooth $\rho$--equivariant map $\mathrm{D}_\rho\co\widetilde{S}\to \HH^2.$ 

In the case $S=\Sigma_{g,n}$, we define the {\em (relative) Euler class} as
$$e(\rho) := \frac{1}{2\pi}\int_{\Sigma_{g, n}} (\mathrm{D}_\rho)^\star \omega,$$ 
where $\omega$ is the hyperbolic area form on $\HH^2.$ The Euler class is an integer, and the Milnor--Wood inequality $|e(\rho)|\leqslant -\chi(\Sigma_{g, n})$ bounds the absolute value of $e(\rho)$ by the absolute value of the Euler characteristic of $\Sigma_{g, n}$, see Milnor\,\cite{mil_ont} and Wood\,\cite{wood_bundles}. A classical result of Goldman\,\cite{goldman:topological} shows that the representations with maximal Euler class are discrete and faithful representations, and the set of such representations consists of two connected components corresponding to the Teichm\"uller space of the surface $\Sigma_{g,n}$, one for each orientation of $\Sigma_{g,n}.$

If  $N = N_{k, n}$ is a non-orientable surface and $\rho \co \pi_1 (N_{k,n}) \to \PGL(2,\R)$ is a type-preserving representation, then it defines an orientable circle bundle, and one can define the Euler class of $\rho$, as the Euler class of the associated circle bundle as in \cite{wood_bundles}. We can also use the following alternative and equivalent definition, which uses the orientable double cover $p \co \widehat{N}\to N$ of $N$, and is easier in this context. The surface $\widehat{N}$ is homeomorphic to the orientable surface $\Sigma_{k-1, 2n}$ and the fundamental group $\pi_1 (\widehat{N})$ is an index two subgroup of $\pi_1 (N)$ which corresponds to the subset of elements represented by $2$--sided curves. If $\rho$ is a type-preserving representation, then all elements corresponding to $2$--sided curves are sent to orientation-preserving isometries. Hence, the restriction of $\rho$ to $\pi_1 (\widehat{N}) \subset \pi_1 (N)$ defines a type-preserving representation $\widehat{\rho} \co \pi_1 (\widehat{N}) \to \PSL(2,\R).$ We define the Euler class of the representation $\rho$ as 
$$e(\rho) := \frac{1}{2} e(\widehat{\rho}).$$
In this case, the Euler class is still an integer and the Milnor-Wood inequality still applies so that $|e(\rho)| \leqslant - \chi(N_{k,n})$, see Wood\,\cite{wood_bundles}. As in the orientable case, the representations with maximal Euler class correspond to discrete and faithful representations and the set of such representations consists of two connected components corresponding to the Teichm\"uller space of $N.$ This definition of the Euler class of a type-preserving representation of a non-orientable surface is equivalent to another definition which uses obstruction classes and which has been discussed by the second author in \cite{palesi_connected}.

\subsection{Kashaev's coordinates on decorated character varieties}\label{back_kas}

\subsubsection{Admissible triangulations}

Let $S$ be a punctured hyperbolizable surface. An ideal arc $\gamma$ in $S$ is  called $\rho$--{\em admissible} if the endpoints of $D_\rho(\widetilde{\gamma})$, where $\widetilde{\gamma}$ is a lift of $\gamma$ to the universal cover $\widetilde S$ of $S$,  are distinct. (One can check that this definition depends neither on the choice of the lift $\widetilde{\gamma}$ of $\gamma$, nor on the choice of $D_\gamma.$) An (ideal) triangulation of $S$ is a disjoint union of ideal arcs such that their complement in $S$ are ideal triangles. A triangulation of $S$ is called $\rho$--{\em admissible} if all its arcs are. This property is preserved by conjugation.

We recall the following result of Kashaev: 

\begin{Theorem}[Kashaev\,\cite{kas_coor}] \label{thm_kash}
  For each ideal triangulation $\cT$ of a punctured hyperbolizable $S$, the set $$\mathfrak{X}_{\cT}(S) = \{[\rho] \in \mathfrak{X}(S) \mid \cT\text{ is } \rho-\text{admissible}\}$$ of representations for which $\cT$ is admissible is open and dense in $\mathfrak{X}(S)$ and there exist finitely many triangulations $\cT_1, \cdots, \cT_r$ such that the associated spaces $\mathfrak{X}_{\cT_i}(S)$ cover $\mathfrak{X}(S).$ 
\end{Theorem}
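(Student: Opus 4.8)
The plan is to prove the two assertions of the theorem—density and openness of each $\mathfrak{X}_{\cT}(S)$, and the existence of a finite covering family—somewhat independently, since they rely on different mechanisms.

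First I would establish that $\mathfrak{X}_{\cT}(S)$ is open. Fix a triangulation $\cT$ and note that $\rho$-admissibility of a single arc $\gamma$ is the condition that the two endpoints of $D_\rho(\widetilde{\gamma})$ on $\partial \HH^2$ are distinct. These endpoints are the attracting/fixed points (ideal limit points of the horocyclic/developing data) associated to the two punctures at the ends of $\gamma$, and they vary continuously with $\rho$ in the character variety $\mathfrak{X}(S)$: the pseudo-developing map can be chosen to depend continuously on the representation, so the endpoint map $[\rho] \mapsto (\text{endpoints of } D_\rho(\widetilde{\gamma}))$ is continuous into $(\partial\HH^2)^2$. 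The non-coincidence of two points is an open condition on $(\partial\HH^2)^2 \setminus \Delta$, so admissibility of $\gamma$ is open, and admissibility of the finitely many arcs of $\cT$ is a finite intersection of open sets, hence open. The mild technical point here is to justify the continuous (and conjugation-invariant) choice of developing data so that the endpoint map descends to $\mathfrak{X}(S)$; this is routine but must be stated carefully.

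The harder part is density, and this is where I expect the main obstacle to lie. I would argue that if $\cT$ is not admissible for some $[\rho]$, then some arc $\gamma$ has coinciding developed endpoints, which is a codimension-$\geqslant 1$ (indeed analytic, non-generic) condition: the locus where the two ideal endpoints collide is cut out by a nontrivial real-analytic equation on the (real-analytic) character variety, so its complement is dense. Concretely, one perturbs $\rho$ slightly—for instance by a small deformation supported near the relevant puncture holonomies—to split the two coincident endpoints apart; one must check that such a perturbation stays inside $\mathfrak{X}(S)$, i.e.\ preserves type-preservation (parabolicity of peripherals, and the $1$-sided/$2$-sided orientation-reversing/preserving constraints). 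The main obstacle is precisely to produce perturbations that break the degeneracy of a \emph{given} arc without violating these constraints; the cleanest route is to show the ``bad'' locus is a proper real-analytic subvariety, whence its complement is automatically open and dense. One should also confirm that the bad locus is genuinely proper, i.e.\ that $\mathfrak{X}_\cT(S)$ is nonempty for each fixed $\cT$—this follows by exhibiting at least one admissible representation, e.g.\ a Fuchsian one realizing $\cT$ as a geodesic ideal triangulation.

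Finally, for the finite covering statement, I would appeal to the action of the mapping class group (or the groupoid of flips/triangle switches) on the set of ideal triangulations of $S$, together with a compactness or finiteness argument. The idea is that the open sets $\mathfrak{X}_{\cT}(S)$, as $\cT$ ranges over all ideal triangulations, form an open cover of $\mathfrak{X}(S)$: every $[\rho]$ admits \emph{some} admissible triangulation, since one can always find an ideal triangulation whose arcs all develop to distinct endpoints (e.g.\ by spinning a geometric triangulation or choosing arcs generically relative to the finitely many degenerate directions of $\rho$). To extract a \emph{finite} subcover I would use that there are only finitely many ideal triangulations up to the action of $\mathrm{Mod}(S)$, and that admissibility is a $\mathrm{Mod}(S)$-equivariant notion, combined with the fact that $\mathfrak{X}(S)$ has finitely many connected components each of finite topological type; a direct compactness argument on a suitable quotient then yields the finite list $\cT_1,\dots,\cT_r$. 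This last step is the least delicate, being essentially a finiteness-of-triangulations input fed into the already-established open cover.
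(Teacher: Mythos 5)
You should first be aware that the paper contains no proof of this statement at all: Theorem \ref{thm_kash} is quoted directly from Kashaev \cite{kas_coor}, and the only argument the authors supply is the one-line remark that Kashaev's original proof never uses orientability of $S$, hence applies verbatim to non-orientable punctured surfaces. So the benchmark for your proposal is Kashaev's proof, not anything in this paper. Against that benchmark, your openness argument is sound (finitely many arcs, the developed endpoints are fixed points of conjugates of the peripheral parabolics and depend continuously on $[\rho]$, and distinctness of two boundary points is open). But your density argument has a real gap: deducing density from ``the bad locus is a proper real-analytic subvariety'' requires the defining equation to be not identically zero on \emph{every} connected component of $\mathfrak{X}(S)$, and $\mathfrak{X}(S)$ has many components, indexed by the Euler class and the signs of the punctures (this is exactly Theorem \ref{conncomp} for $N_{1,3}$). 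Your proposed witness --- a Fuchsian representation realizing $\cT$ geodesically --- lives only in the two extremal components, so it establishes properness of the bad locus nowhere else; for the non-maximal components you would need either an admissible representation in each component or a perturbation argument valid at an arbitrary, possibly elementary, type-preserving representation, and you supply neither.

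The finite-cover step is the more serious failure. The space $\mathfrak{X}(S)$ is not compact (the $\lambda$-length coordinates of Theorem \ref{coordin} already identify pieces of it with unbounded open subsets of $(\R_{>0})^E$), so there is no compactness from which to extract a finite subcover; and the quotient $\mathfrak{X}(S)/\mathrm{Mod}(S)$ is not a usable topological space --- the present paper proves $\mathrm{Mod}(S)$ acts ergodically on several components, so that quotient is wildly non-Hausdorff. Equivariance does cut the collection $\{\mathfrak{X}_{\cT}(S)\}$ into finitely many $\mathrm{Mod}(S)$-orbits, but each orbit is infinite, so no finite family falls out of this. Note also that your open cover itself presupposes the hard content: ``choosing arcs generically relative to the degenerate directions of $\rho$'' is not available, since isotopy classes of arcs form a countable discrete set, and showing that \emph{every} type-preserving $[\rho]$ admits some admissible triangulation is precisely what must be proved. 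Kashaev's actual argument is combinatorial and constructive: one analyzes how non-admissible edges of a fixed triangulation can be repaired by a bounded number of diagonal flips (the local flip behavior is the content recorded in Theorem 2.4 of this paper), which produces an explicit finite list $\cT_1,\dots,\cT_r$; nothing in your compactness-plus-equivariance scheme can replace that step.
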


Kashaev only stated this theorem for orientable surfaces, but the original proof does not rely on the orientability of the surface, hence the theorem is still valid for all punctured hyperbolizable surfaces, including non-orientable ones.

\subsubsection{Decorated character varieties and Kashaev's coordinates}

Let $S = \Sigma_{g,n}$ or $N_{k,n}$ and let $\Gamma$ be its fundamental group. Given a type-preserving representation $\rho\co \Gamma \to \PGL(2,\R)$, a {\em decoration} $d$ of $\rho$ is a choice of $\rho(\Gamma)$--invariant horocycles, one for each fixed point of the image under $\rho$ of the peripheral elements of $\Gamma.$ Two decorated representations $(\rho_1, d_1)$ and $(\rho_2, d_2)$ are equivalent if $\rho_2 = g\rho_1 g^{-1}$ and $d_2 = g \cdot d$ for some $g\in \PSL(2, \R).$ We define the {\em decorated character variety} $\mathfrak{X}^d(S)$ to be the space of equivalence classes of decorated representations.

For any $\rho$--admissible ideal triangulation $\cT$ of $S$ with edge set $E$ and triangle set $T$, and for any pseudo-developing map $D_\rho\co\widetilde{S} \to \HH^2$, the lengths coordinates of $(\rho, d)$ consist of two types of coordinates: the $\lambda$--lengths of the edges and the signs of the triangles, defined as follows. For any edge $e$, let $\widetilde{e}$ be a lift of $e$ to $\widetilde{S}$ and let $H_1$ and $H_2$ the two horocycles associated to the two endpoints of $D_\rho(\widetilde{e}).$ We define the $\lambda$--\text{length} of $e$ to be
$$\lambda(e)=\exp\left(\frac{l(e)}{2}\right),$$ 
where $l(e)$ is the signed distance between $H_1$ and $H_2$ (that is, $l(e)>0$ if the horocycles are disjoint and $l(e)\leqslant 0$ otherwise). We can see that this gives a well-defined map $$\lambda\co E \to \R_{>0}.$$
Given a $\rho$--admissible triangulation $\cT$ of an orientable surface $\Sigma_{g,n}$, we assign a sign to each triangle $t \in \cT$ as follows. Let $(v_1, v_2, v_3)$ be the three vertices of $t$ so that the orientation on $t$ given by their cyclic order agrees with the orientation of $t$ given by the orientation on $\Sigma_{g, n}.$ Let $\widetilde{t}$ be a lift of $t$ to the universal cover $\widetilde{\Sigma}_{g, n}$ and let $\widetilde{v}_i$ be a lift of the vertices $v_i.$ Then $D_\rho(\widetilde{v}_1)$, $D_\rho(\widetilde{v}_2)$ and $D_\rho(\widetilde{v}_3)$ determine an ideal triangle $\Delta$ in $\HH^2.$ We define the \textit{sign} $\epsilon(t) = 1$ if the orientation of $\Delta$ given by the cyclic order of $(D_\rho(\widetilde{v}_1), D_\rho(\widetilde{v}_2), D_\rho(\widetilde{v}_3))$ coincides with the one induced by the orientation of $\HH^2.$ Otherwise, we define $\epsilon(t) = -1.$ One can check that this is well-defined and that the relative Euler class can then also be calculated as 
\begin{equation}\label{euler_sign}
  e(\rho) = \frac{1}{2}\sum_{t\in \cT}\epsilon(t).
\end{equation}

For a non-orientable surface $N_{k,n}$, we let $\cT$ be a $\rho$--admissible triangulation  on $N_{k,n}.$ Then this triangulation lifts to a triangulation $\widehat{\cT}$ of its orientable double cover $\widehat{N}_{k,n}$, where each arc has two lifts, and this triangulation is $\widehat{\rho}$--admissible. We fix an orientation on $\widehat{N}_{k,n}$ once and for all. So if $t$ is a triangle in $\cT$, then it lifts to two distinct triangles $\hat{t}$ and $\hat{t}'$ in the double cover. One can assign signs to these two triangles using $\widehat{\rho}$, as explained above. 

\begin{Proposition}
  For a type-preserving representation $\rho$ and a triangle $t$, the signs of the two lifted triangles $\hat{t}$ and $\hat{t}'$ are equal.
\end{Proposition}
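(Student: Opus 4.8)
The plan is to exploit the nontrivial deck transformation $\sigma\co \widehat N_{k,n}\to \widehat N_{k,n}$ of the double cover $p\co \widehat N_{k,n}\to N_{k,n}$. Since $N_{k,n}$ is non-orientable and $\widehat N_{k,n}$ is its orientation cover, $\sigma$ is orientation-reversing, and by construction it interchanges the two lifts of any triangle of $\cT$, so that $\sigma(\hat t)=\hat t'$. The deck group $\Gamma/\pi_1(\widehat N_{k,n})\cong\Z/2$ is the quotient of $\Gamma=\pi_1(N_{k,n})$ by the index-two normal subgroup $\pi_1(\widehat N_{k,n})$ of $2$--sided elements; I would choose any $\gamma\in\Gamma\smallsetminus\pi_1(\widehat N_{k,n})$, so that the action of $\gamma$ on the common universal cover $\widetilde N_{k,n}$ descends to $\sigma$ on $\widehat N_{k,n}=\widetilde N_{k,n}/\pi_1(\widehat N_{k,n})$. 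Because $\gamma$ is represented by a $1$--sided curve, the type-preserving hypothesis forces $\rho(\gamma)\in\PGL(2,\R)$ to be an orientation-reversing isometry of $\HH^2$.

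Next I would track two cancelling orientation reversals. Fix a lift $\widetilde t\subset\widetilde N_{k,n}$ of $t$ projecting to $\hat t$, with vertices $\widetilde v_1,\widetilde v_2,\widetilde v_3$ lying over a cyclic order $(v_1,v_2,v_3)$ of $\hat t$ that is positive for the fixed orientation of $\widehat N_{k,n}$; then $\gamma\cdot\widetilde t$ is a lift of $\hat t'=\sigma(\hat t)$. Since $\sigma$ reverses orientation, the images $(\sigma v_1,\sigma v_2,\sigma v_3)$ form a \emph{negatively} oriented cyclic order on $\hat t'$, so the positively oriented order used to compute $\epsilon(\hat t')$ is the transposition $(\sigma v_1,\sigma v_3,\sigma v_2)$, lifted by $(\gamma\widetilde v_1,\gamma\widetilde v_3,\gamma\widetilde v_2)$. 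Applying $\rho$--equivariance, $D_\rho(\gamma\widetilde v_i)=\rho(\gamma)D_\rho(\widetilde v_i)$, so $\epsilon(\hat t')$ is the orientation relative to $\HH^2$ of the triple $(\rho(\gamma)D_\rho(\widetilde v_1),\rho(\gamma)D_\rho(\widetilde v_3),\rho(\gamma)D_\rho(\widetilde v_2))$. This differs from $\epsilon(\hat t)=\mathrm{sign}\,(D_\rho(\widetilde v_1),D_\rho(\widetilde v_2),D_\rho(\widetilde v_3))$ by exactly two sign changes: the transposition of the last two vertices contributes one factor of $-1$, and the orientation-reversing isometry $\rho(\gamma)$ contributes a second. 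The two cancel, giving $\epsilon(\hat t')=\epsilon(\hat t)$.

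The substance of the argument is this bookkeeping of two cancelling reversals, so the main point to get right is that \emph{both} the geometric reversal of $\sigma$ on $\widehat N_{k,n}$ and the reversal of $\rho(\gamma)$ on $\HH^2$ genuinely occur and are independent. The first rests on the defining property of the orientation cover; the second on the identification of $\Gamma\smallsetminus\pi_1(\widehat N_{k,n})$ with the $1$--sided classes together with the type-preserving condition. I would finish by noting that the statement is independent of the auxiliary choices: a different lift $\widetilde t$ of $\hat t$, or a different coset representative $\gamma$, changes the points $D_\rho(\widetilde v_i)$ by a common element of $\rho(\pi_1(\widehat N_{k,n}))$, which is orientation-preserving and hence leaves each sign unchanged, exactly as in the well-definedness of $\epsilon$ in the orientable case already invoked above.
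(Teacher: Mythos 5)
Your proof is correct and follows essentially the same route as the paper's: both arguments use the orientation-reversing deck transformation $\sigma$ with $\sigma(\hat t)=\hat t'$, note that $\rho(\gamma)$ is orientation-reversing for a $1$--sided $\gamma$ by the type-preserving hypothesis, and observe that these two reversals cancel in the computation of the sign. Your additional verification that the conclusion is independent of the choice of lift and coset representative is a welcome (if minor) refinement the paper leaves implicit.
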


\begin{proof}
  Let $N$ and $\widehat{N}$ be defined as before, then we have $N = \widehat{N}/\sigma,$ where $\sigma$ is an orientation-reversing homeomorphism of $\widehat{N}$, so that $\sigma (\hat{t}) = \hat{t'}.$ If the cyclic order of the vertices of $\hat{t}$ induced by the orientation on $\widehat{N}$ is $(v_1 , v_2 , v_3)$, then for the triangle $\hat{t}' = \sigma (\hat{t})$, the cyclic order becomes $(v'_1, v'_3 , v'_2)$ where $v'_i = \sigma (v_i).$ Without loss of generality, we can assume that $\varepsilon(\hat{t} ) = 1$ so that $(D_{\hat{\rho}}(\widetilde{v}_1), D_{\hat{\rho}}(\widetilde{v}_2), D_{\hat{\rho}} (\widetilde{v}_3))$ is in the cyclic order given by the orientation of $\mathbb{H}^2.$ Now, consider the other triangle $(D_{\hat{\rho}}(\widetilde{v}'_1), D_{\hat{\rho}}(\widetilde{v}'_2), D_{\hat{\rho}} (\widetilde{v}'_3)).$ By $\rho$-equivariance, this triangle is the image of the first triangle by $\rho (\gamma)$ for a certain $1$--sided curve $\gamma.$ As the representation is type-preserving, the isometry $\rho (\gamma)$ is orientation reversing, and hence the second triangle has the opposite cyclic order. This proves that $\varepsilon (\hat{t}') = 1 = \varepsilon (\hat{t} ).$ 
\end{proof}

Using this Proposition, we can define the sign of the triangle $t$ in the original triangulation to be the common sign of both lifted triangles for the lifted representation, so we get a well-defined map $$\epsilon\co T \to \{-1, +1\}.$$ 
The Euler class of the lifted representation is then given by 
$$e(\widehat{\rho}) =  \frac{1}{2}\sum_{\hat{t} \in \hat{\cT}}\epsilon(\hat{t})  = \frac {1}{2} \left (2 \sum_{t\in \cT}\epsilon(t) \right),$$ so we get the same formula as in Equation \eqref{euler_sign} for the Euler class of a type-preserving representation. By using the orientable double cover $p\co\Sigma_{0, 6}\to N_{1, 3}$ and Kashaev's result \cite[Theorem 2]{kas_coor} for orientable surfaces we get the following.

\begin{Theorem}\label{coordin}
There exists a principal $((\R_{>0})^V)$--bundle $p\co \mathfrak{X}^d(S)\to \mathfrak{X}(S)$ such that for any triangulation $\mathcal{T}$ of $S$ we have that $$\mathfrak{X}^d_{\cT} (S) = p^{-1}(\mathfrak{X}_{\cT} (S)) = \bigsqcup_{\epsilon \in \{\pm 1\}^T}\mathfrak{X}^d_{\cT}(\epsilon),$$ and, via the $\l$--lengths, $\mathfrak{X}^d_{\cT}(\epsilon)$ is isomorphic as principal $((\R_{>0})^V)$--bundle to an open subset of $(\R_{>0})^E$ defined as the complement of the zeros of certain rational function coming from the image of the peripheral elements not being the identity matrix.
\end{Theorem}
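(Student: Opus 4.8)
The plan is to reduce everything to Kashaev's theorem applied to the orientable double cover $p\co\widehat N\to N$ and then descend the coordinates using the orientation-reversing deck involution $\sigma.$ First I would make the principal bundle structure explicit and independent of the triangulation: over a fixed class $[\rho]\in\mathfrak{X}(S),$ a decoration is a choice of one horocycle at each of the punctures in $V,$ and the set of horocycles centered at a fixed parabolic fixed point is a torsor under $\R_{>0}$ (rescaling the Busemann parameter). This gives a free and transitive $((\R_{>0})^V)$--action on the fibers of the forgetful map $p\co\mathfrak{X}^d(S)\to\mathfrak{X}(S),$ hence the claimed principal bundle. Since the sign $\epsilon(t)\in\{\pm1\}$ of each triangle is well defined by the Proposition above and is locally constant in $[\rho,d],$ the function $\epsilon\co T\to\{\pm1\}$ partitions $\mathfrak{X}^d_{\cT}(S)=p^{-1}(\mathfrak{X}_{\cT}(S))$ into the open-and-closed pieces $\mathfrak{X}^d_{\cT}(\epsilon).$

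Next I would set up the double-cover reduction. Restricting a type-preserving $\rho$ to the index-two subgroup $\pi_1(\widehat N)\subset\pi_1(N)$ yields a type-preserving $\widehat\rho\co\pi_1(\widehat N)\to\PSL(2,\R),$ and a decoration of $\rho$ lifts to a $\sigma$-invariant decoration of $\widehat\rho.$ Conversely, $\widehat\rho$ together with the orientation-reversing isometry realizing $\sigma$ recovers $\rho,$ so $\rho\mapsto\widehat\rho$ identifies $\mathfrak{X}^d(N)$ with the $\sigma$-invariant part of $\mathfrak{X}^d(\widehat N),$ compatibly with the forgetful maps and with the lift $\widehat\cT$ of $\cT.$ For $\widehat N=\Sigma_{0,6},$ Kashaev's Theorem 2 in \cite{kas_coor} gives that each sign-stratum of $\mathfrak{X}^d_{\widehat\cT}(\widehat N)$ is isomorphic, via the $\lambda$-lengths, to an open subset of $(\R_{>0})^{\widehat E}$ cut out by the nonvanishing of the rational functions expressing that the lifted peripheral holonomies are not the identity.

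I would then descend the coordinates. The involution $\sigma$ permutes $\widehat E$ and $\widehat T$ by interchanging the two lifts $\hat e,\hat e'$ of each edge $e$ and the two lifts $\hat t,\hat t'$ of each triangle $t.$ Because $\sigma$ is realized by an isometry of the developing image carrying the chosen horocycles to themselves, the $\lambda$-lengths of $\hat e$ and $\hat e'$ coincide; and by the Proposition the signs of $\hat t$ and $\hat t'$ coincide. Hence on the $\sigma$-invariant part the $\widehat E$-coordinates lie on the diagonal indexed by $E$ and the sign function is the pullback of some $\epsilon\in\{\pm1\}^T,$ so the $\lambda$-lengths yield a well-defined map $\lambda\co E\to\R_{>0}$ realizing $\mathfrak{X}^d_{\cT}(\epsilon)$ as the slice of Kashaev's open locus by this diagonal. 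This slice is again open, equals the complement of the zeros of the restricted rational functions detecting nontrivial parabolicity of the peripheral holonomies, and is equivariant for the bundle action: rescaling the horocycle at a vertex $v$ multiplies $\lambda(e)$ by a factor depending only on $v$ for each edge $e$ incident to $v,$ which is exactly the induced $((\R_{>0})^V)$--action on $(\R_{>0})^E.$ This produces the desired isomorphism of principal $((\R_{>0})^V)$--bundles onto its image.

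The hard part will be this last identification. Completeness of the descended coordinates requires checking that every $\sigma$-invariant decorated structure on $\widehat N$ with matching paired $\lambda$-lengths and signs actually arises from an honest representation of $\pi_1(N),$ that is, that the reconstruction of $\sigma$ as an orientation-reversing isometry of the developing image is consistent; and the rational-function description requires an explicit horocyclic-length (equivalently, holonomy) formula for a loop around each puncture in terms of the incident $\lambda$-lengths and the signs, whose vanishing is precisely the degeneration of the peripheral parabolic to the identity. Both issues are local near the punctures, so I expect them to follow from the orientable statement together with $\sigma$-equivariance, but this is exactly where the argument needs care.
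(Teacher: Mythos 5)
Your proposal follows essentially the same route as the paper, which derives Theorem \ref{coordin} in a single sentence from the orientable double cover $p\co\Sigma_{0,6}\to N_{1,3}$, Kashaev's Theorem 2 for orientable surfaces, and the preceding Proposition showing the two lifts $\hat t,\hat t'$ of a triangle carry the same sign. Your elaboration of the descent --- equality of the $\lambda$--lengths on the two lifts of each edge, the $((\R_{>0})^V)$--equivariance of the coordinates, and the extension problem for the deck involution $\sigma$ that you correctly flag as the delicate point --- is sound and in fact supplies details that the paper leaves entirely implicit.
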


\subsubsection{Diagonal switches}

Given a triangulation $\cT$ we can define a new triangulation $\cT'$ by replacing an edge $e$ with the other diagonal $e'$ inside the quadrilateral formed by the two triangles adjacent to $e.$ This operation is called a {\em diagonal switch}. Any two triangulations are related to each other by finitely many diagonal switches, see for example Harer\,\cite{harer_virtual} in the orientable case, and Negami\,\cite{neg_diagonal} in the non-orientable case. The following result describes what happens to $\rho$--admissibility and $\lambda$--lengths when we do this move. Let $t_1$ and $t_2$ the triangles in $\cT$, and $t_1'$ and $t_2'$ be the triangles in $\cT'$ and let these triangles have edges (in cyclic order) $t_1= (e, e_1, e_2)$, $t_2=(e, e_3, e_4)$, $t_1'=(e', e_3, e_1)$ and $t_2'=(e', e_2, e_4).$
 
\begin{Theorem}
  Suppose $\cT$ is $\rho$--admissible. Then:
\begin{enumerate}
  \item If $\epsilon(t_1)=\epsilon(t_2)$, then $\cT'$ is $\rho$--admissible and, in this case, the signs and lengths of the common triangles of $\cT$ and $\cT'$ are unchanged, and we have $$\epsilon(t_1')= \epsilon(t'_2)=\epsilon(t_1) \text{ and } \lambda(e') = \frac{\lambda(e_1)\lambda(e_3) + \lambda(e_2)\lambda(e_4)}{\lambda(e)}.$$
  \item If $\epsilon(t_1)\neq\epsilon(t_2)$, then $\cT'$ is $\rho$--admissible if and only if $\lambda(e_1)\lambda(e_3) \neq \lambda(e_2)\lambda(e_4).$ In this case, the signs and lengths of the common triangles of $\cT$ and $\cT'$ are unchanged, and we have
  \begin{enumerate}[(a)]
    \item If $\lambda(e_1)\lambda(e_3) < \lambda(e_2)\lambda(e_4)$, then $$\epsilon(t'_1)=\epsilon(t_1), \;\epsilon(t'_2)=\epsilon(t_2) \text{ and } \lambda(e') = \frac{\lambda(e_2)\lambda(e_4) - \lambda(e_1)\lambda(e_3)}{\lambda(e)}.$$
    \item If $\lambda(e_1)\lambda(e_3) > \lambda(e_2)\lambda(e_4)$, then $$\epsilon(t'_1)=\epsilon(t_2),\; \epsilon(t'_2)=\epsilon(t_1) \text{ and } \lambda(e') = \frac{\lambda(e_1)\lambda(e_3) - \lambda(e_2)\lambda(e_4)}{\lambda(e)}.$$
  \end{enumerate}
\end{enumerate}
\end{Theorem}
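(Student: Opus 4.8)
The plan is to reduce everything to the well-understood orientable situation via the double cover, since by the preceding Proposition the signs of triangles in $N_{k,n}$ are already defined as the common sign of their two lifts, and the $\lambda$--lengths on $N_{k,n}$ pull back to $\lambda$--lengths on $\widehat{N}_{k,n}$. The key observation is that a diagonal switch on an edge $e$ of $\cT$ lifts to \emph{two} disjoint diagonal switches on the orientable double cover $\widehat{N}$, one on each of the two lifts $\widehat{e}, \widehat{e}'$ of $e$. Because the deck transformation $\sigma$ carries one lifted quadrilateral to the other, the two lifted switches are combinatorially identical, and it suffices to analyze a single diagonal switch upstairs, on an oriented surface, and then transport the conclusion back down.

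First I would establish the statement on an oriented surface, i.e.\ prove the analogous diagonal-switch formula for a switch on the quadrilateral with triangles $t_1 = (e, e_1, e_2)$ and $t_2 = (e, e_3, e_4)$ directly in $\HH^2$. I would lift the quadrilateral to the universal cover and let $z_1, z_2, z_3, z_4 \in \partial \HH^2 = \R \cup \{\infty\}$ be the images under $D_\rho$ of its four ideal vertices, decorated by horocycles. The $\lambda$--lengths are then given by Penner's formula $\lambda(e_{ij}) = |z_i - z_j|/\sqrt{h_i h_j}$ in terms of the horocycle heights $h_i$, so the products $\lambda(e_1)\lambda(e_3)$ and $\lambda(e_2)\lambda(e_4)$ become, after the common factors cancel, comparisons of $|z_i - z_j|$-products encoding the two pairs of opposite sides. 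The Ptolemy relation $|z_1 - z_3|\,|z_2 - z_4| = |z_1 - z_2|\,|z_3 - z_4| + |z_2 - z_3|\,|z_1 - z_4|$ holds when the four points are in convex position (cyclic order on the boundary), which is exactly the case $\epsilon(t_1) = \epsilon(t_2)$; this yields case (1). When $\epsilon(t_1) \neq \epsilon(t_2)$ the four points are \emph{not} in convex cyclic position — the developed triangles overlap — and the Ptolemy equality is replaced by one of the two triangle-type identities $|z_1-z_3||z_2-z_4| = \bigl|\,|z_1-z_2||z_3-z_4| - |z_2-z_3||z_1-z_4|\,\bigr|$, with the sign of the difference recording which of the two developed configurations occurs, and hence which signs $\epsilon(t_1'), \epsilon(t_2')$ appear; this produces the dichotomy (2a)/(2b). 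The admissibility condition $\lambda(e_1)\lambda(e_3) \neq \lambda(e_2)\lambda(e_4)$ is precisely the condition that the new developed edge $D_\rho(\widetilde{e}')$ has distinct endpoints, equivalently that $z_1, z_3$ (the new diagonal vertices) are not sent to a degenerate configuration.

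Next I would transport this to $N_{k,n}$. Since the two lifted switches are $\sigma$-conjugate, the hypotheses ($\epsilon(t_1) = \epsilon(t_2)$ or not, and the comparison of $\lambda$-products) are identical on both lifts, so the oriented case applies verbatim to each lift and yields identical conclusions; the common values then descend to the sign $\epsilon(t')$ and $\lambda$-length $\lambda(e')$ on $N_{k,n}$ via the already-established Proposition identifying the sign of $t'$ with the common sign of $\widehat{t}'$ and $\widehat{t}\,''$. That the signs and $\lambda$--lengths of the \emph{other} triangles and edges are unchanged is immediate: the developing map and decoration are untouched away from the switched quadrilateral, and these quantities are defined locally on each triangle or edge.

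The main obstacle I expect is the careful bookkeeping in case (2): one must verify that the combinatorial edge-labelling $t_1' = (e', e_3, e_1)$, $t_2' = (e', e_2, e_4)$ matches the geometric reassignment of the developed triangles when $\epsilon(t_1) \neq \epsilon(t_2)$, and in particular that in subcase (2b) the two new triangles genuinely \emph{swap} signs relative to (2a). This is where the orientation data is subtle, because when the developed images overlap the naive cyclic-order computation of $\epsilon(t_i')$ can flip, and tracking exactly which vertex triple lands in which half-plane of $\HH^2$ requires a genuine case analysis of the position of $z_1, z_3$ relative to the geodesic through $z_2, z_4$. I would handle this by normalizing three of the four boundary points (say $z_2 = 0$, $z_4 = \infty$, $z_1 = 1$) using the $\PSL(2,\R)$-action and then reading off the sign of the remaining cross-ratio-type quantity, which simultaneously determines the comparison of $\lambda$-products and the orientations of the two new developed triangles, thereby closing the argument uniformly across all subcases.
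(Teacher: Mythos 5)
Your proposal is correct in substance, but it does considerably more work than the paper itself, which offers no computation at all for this statement: the paper simply notes that the result was proved for orientable surfaces in \cite{yan_ont} and remarks that, since it expresses a local relation inside an ideal quadrilateral, it applies to non-orientable surfaces directly. Your route differs in two ways. First, you re-prove the orientable case from scratch, via Penner's formula $\lambda(e_{ij}) = |z_i - z_j|/\sqrt{h_i h_j}$ and the dichotomy between the Ptolemy identity (the four developed ideal points in linked cyclic position, i.e.\ $\epsilon(t_1)=\epsilon(t_2)$) and the signed-Ptolemy identity (unlinked position, i.e.\ $\epsilon(t_1)\neq\epsilon(t_2)$), with the degeneracy $\lambda(e_1)\lambda(e_3)=\lambda(e_2)\lambda(e_4)$ correctly identified, after cancelling the common horocyclic factor $(h_1h_2h_3h_4)^{-1/2}$, with the endpoints of $D_\rho(\widetilde{e}')$ coinciding. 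Second, you pass to non-orientable surfaces via the double cover rather than by the paper's locality remark; this is heavier but arguably the more honest justification, since the sign $\epsilon(t)$ on $N_{k,n}$ is \emph{defined} through the two lifts, so verifying the sign-transformation rules strictly speaking requires checking them on the lifts (or fixing a local orientation on the quadrilateral and checking independence of the choice) --- your use of the $\sigma$-equivariance together with the preceding Proposition, applied to both $\cT$ and $\cT'$, closes exactly this point. The one place where your plan still has real work to do is the bookkeeping you yourself flag in case (2): with the stated cyclic-order conventions $t_1=(e,e_1,e_2)$, $t_2=(e,e_3,e_4)$, $t'_1=(e',e_3,e_1)$, $t'_2=(e',e_2,e_4)$, one must fix conventions so that the opposite-side pairing producing $\lambda(e_1)\lambda(e_3)+\lambda(e_2)\lambda(e_4)$ is compatible with the asserted edge-memberships of $t'_1$ and $t'_2$; a careless reading of the cyclic orders makes the Ptolemy pairing come out as $(e_1,e_4),(e_2,e_3)$ instead, so your proposed normalization $z_2=0$, $z_4=\infty$, $z_1=1$ and explicit case analysis is precisely the right instrument to nail down both the pairing and the sign swap in (2b). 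In short: the paper buys brevity by outsourcing to \cite{yan_ont}; your argument buys a self-contained proof and an explicit explanation of why the sign calculus survives non-orientability.
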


  The previous result was only stated for orientable surface in \cite{yan_ont}. However, as this corresponds to local relations inside ideal quadrilateral, one can also apply it to non-orientable surfaces directly.

\subsection{Trace formulas for closed curves}\label{back_trace}

In this section we recall the formula for the trace of the image of a curve in function of the coordinates described above. The formula was stated in \cite{yan_ont} from an idea of Sun and Yang. They discussed the case of orientable surfaces, but their formula can be extended to any $2$--sided simple closed curve on a surface, independent of the orientation of the surface, and we will discuss at the end of this section how to extend the result to $1$--sided simple closed curves.

Let $\gamma$ be an immersed closed $2$--sided curve, and choose an orientation on the orientable regular neighborhood of $\gamma.$ Homotopy it, if necessary, to its {\em standard position}, that is, in such a way that $\gamma$ intersects each triangle tranversely crossing from one edge $e_1$ to another edge $e_2.$ Let $e_3$ be the third edge of $t.$ 

For each triangle $t$ we define the matrix $M(t)$ as follows. If $\gamma$ makes a left turn as in (a) of Figure \ref{fig:lr}, then let $M(t) =   \begin{bmatrix} 
    \lambda(e_1) & \epsilon(t)\lambda(e_3)\\ 
    0 & \lambda(e_2) 
  \end{bmatrix}$, while if $\gamma$ makes a right turn as in (b) of Figure \ref{fig:lr}, then let $M(t) =   \begin{bmatrix} 
      \lambda(e_2) & 0\\ 
      \epsilon(t)\lambda(e_3) & \lambda(e_1) 
    \end{bmatrix}.$

    \begin{figure}
[hbt] \centering
\includegraphics[height=4 cm]{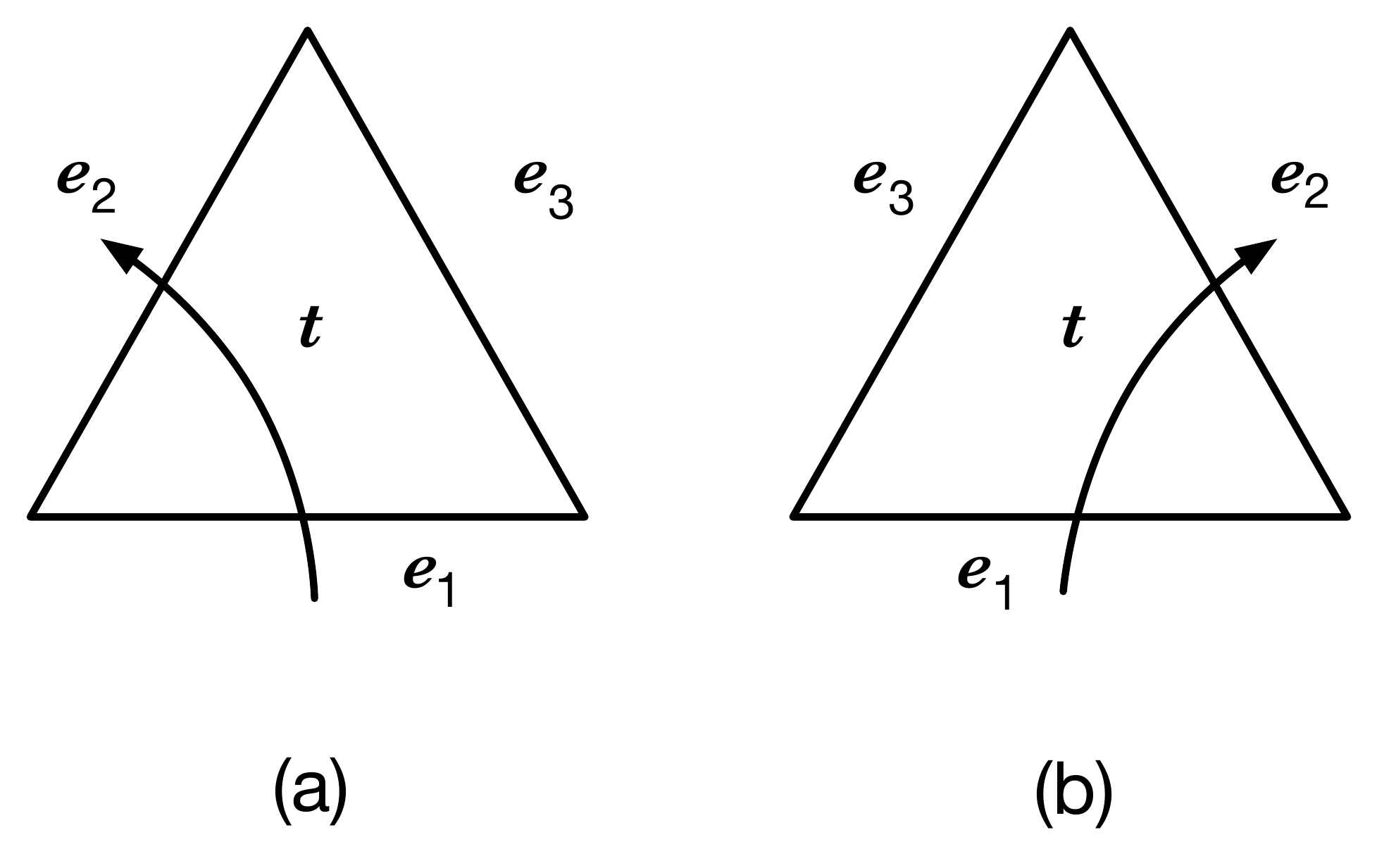}
\caption{}
\label{fig:lr}
\end{figure}
    
    From this, we get the following formula.

\begin{Theorem}[Sun--Yang\,\cite{yan_ont}]\label{SunYang}
  For an immersed $2$--sided closed curve $\gamma$ in standard position, let $e_{i_1}, \ldots, e_{i_m}$ and $t_{j_1}, \ldots, t_{j_m}$ be the edges and triangles, respectively, intersected by $\gamma$ in the cyclic order following the orientation of $\gamma$ so that $e_{i_k}$ is the common edge of $t_{j_{k-1}}$ and $t_{j_{k}}.$ Then
\begin{equation}\label{trace_formula}
  |\mathrm{tr}\rho([\gamma])| = \frac{|\mathrm{tr}\left(M(t_{j_1})\cdots M(t_{j_m})\right)|}{\lambda(e_{i_1})\cdots \lambda(e_{i_m})}.
\end{equation}
\end{Theorem}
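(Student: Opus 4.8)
The plan is to establish the trace formula by assigning to the oriented curve $\gamma$ a developing-type holonomy computation in $\SLtwoR$ (or rather a lift of the $\PSLtwoR$-holonomy), and to show that the product of the matrices $M(t_{j_1})\cdots M(t_{j_m})$, once divided by the product of the $\lambda$-lengths of the crossed edges, recovers the trace of $\rho([\gamma])$ up to sign. The key idea is that as $\gamma$ travels through the triangulation, at each triangle it crosses one can write the holonomy transition in terms of a fixed ideal triangle model. Concretely, I would fix the standard ideal triangle in $\HH^2$ with vertices at $0$, $1$, $\infty$ decorated by the standard horocycles, so that $\lambda$-lengths of its sides are all equal to $1$, and express the motion from entering one edge to exiting the next as a product of an ``edge-matrix'' (gluing two adjacent triangles along a common edge, governed by the $\lambda$-length of that edge and the signs) and a ``turn-matrix'' (moving inside a single triangle from the entry edge to the exit edge, a left or right turn). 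The matrices $M(t)$ in the statement are exactly the composites of these elementary pieces, with the off-diagonal $\epsilon(t)\lambda(e_3)$ encoding both the internal geometry of the triangle and the sign that records the orientation of the developed triangle relative to $\HH^2$.

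First I would set up the $\SLtwoR$ (or $\mathrm{GL}$) normalization carefully, so that the $\lambda$-lengths appear as the entries and the division by $\lambda(e_{i_1})\cdots\lambda(e_{i_m})$ in \eqref{trace_formula} is exactly the normalization needed to pass from a $\mathrm{GL}$-valued cocycle to an $\SLtwoR$-valued one; here I expect the determinant of each $M(t)$ to be $\lambda(e_1)\lambda(e_2)$, so the product of all determinants equals the square of the product of all edge $\lambda$-lengths, and taking the square root in the right way yields the single power of $\lambda(e_{i_1})\cdots\lambda(e_{i_m})$ in the denominator. Second, I would verify the two elementary building blocks: the left-turn and right-turn matrices for a single triangle (matching Figure \ref{fig:lr}) and the passage across an edge. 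Third, I would compose these along $\gamma$ in the cyclic order $e_{i_1},t_{j_1},e_{i_2},\dots$ so that the common edge $e_{i_k}$ of $t_{j_{k-1}}$ and $t_{j_k}$ is accounted for exactly once; telescoping the edge contributions leaves precisely the product $M(t_{j_1})\cdots M(t_{j_m})$ in the numerator and the product of edge lengths in the denominator.

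Because $\rho$ takes values in $\PGL(2,\R)$ and not just $\PSL(2,\R)$, and because $\gamma$ is closed, the holonomy $\rho([\gamma])$ is only defined up to sign as an element of $\SLtwoR$ (the lift is ambiguous by $\pm I$, and conjugacy may further introduce a sign); this is why the formula computes $|\Tr\rho([\gamma])|$ rather than $\Tr\rho([\gamma])$ itself. I would argue that the ambiguity in choosing lifts of the elementary matrices, and in choosing the base edge at which to open up the cyclic product (which only changes the product by cyclic permutation, hence does not change its trace), together account for exactly this sign indeterminacy, so the absolute value is well defined and independent of all choices. Since the curve is assumed $2$-sided, we have fixed an orientation on its orientable regular neighborhood, and every triangle it crosses inherits a consistent notion of left versus right turn; this is what makes the left/right assignment, and hence the product, unambiguous.

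The main obstacle I anticipate is bookkeeping the signs $\epsilon(t)$ correctly and uniformly, especially in the non-orientable setting. On an orientable surface the sign $\epsilon(t)$ records whether the developed triangle agrees with the ambient orientation of $\HH^2$, and the left/right turn convention is globally coherent; but on $N_{1,3}$ a $2$-sided curve may pass through triangles whose two lifts to the double cover $\widehat N$ have a prescribed common sign (by the Proposition already proved), and I must check that crossing such triangles contributes the matrix $M(t)$ with the \emph{intrinsic} sign $\epsilon(t)$ and not some lift-dependent sign. The safest route is to do the entire computation on the orientable double cover $\widehat N$, where the Sun--Yang formula of \cite{yan_ont} applies verbatim to the lift $\widehat\rho$ of $\rho$: a $2$-sided curve $\gamma$ lifts to a closed curve $\widehat\gamma$ in $\widehat N$ (since $\gamma$ is $2$-sided, $[\gamma]$ lies in $\pi_1(\widehat N)$), and $\rho([\gamma])=\widehat\rho([\widehat\gamma])$ as elements of $\PSLtwoR$. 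Applying the orientable trace formula to $\widehat\gamma$, and then using that the signs and $\lambda$-lengths of the lifted triangles equal those downstairs by the Proposition and by the definition of the $\lambda$-length via lifts, would reduce the non-orientable statement to the already-established orientable one, turning the hardest step into a careful but routine matching of the upstairs and downstairs data.
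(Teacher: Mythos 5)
Your proposal is correct, but it takes a genuinely different (and more detailed) route than the paper, which in fact offers no proof of this theorem at all: it cites the orientable case from \cite{yan_ont} and disposes of the non-orientable extension in a single sentence, on the grounds that the entire computation is local to the oriented regular neighborhood of the $2$-sided curve $\gamma$ --- the left/right turns, the signs $\epsilon(t)$, and the $\lambda$-lengths are all data read off along $\gamma$, so orientability of the ambient surface never enters, and the closing remark that the formula is independent of the chosen orientation on the neighborhood absorbs the remaining ambiguity. Your first two paragraphs reconstruct the mechanism behind the orientable case (the edge-matrix/turn-matrix cocycle, with the determinant bookkeeping $\det M(t)=\lambda(e_1)\lambda(e_2)$ correctly explaining why each edge length appears exactly once in the denominator of \eqref{trace_formula}), which is indeed how the result is proved in \cite{yan_ont} but which this paper merely cites. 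For the extension, your double-cover reduction is a different argument from the paper's locality remark: you lift to the orientable double cover $\widehat{N}$, use that $[\gamma]\in\pi_1(\widehat{N})$ gives a closed homeomorphic lift $\widehat{\gamma}$, invoke the Proposition that the two lifts of each triangle carry a common sign (together with equality of lifted $\lambda$-lengths) to match the upstairs and downstairs matrices $M(t)$, and conclude from $\rho([\gamma])=\widehat{\rho}([\widehat{\gamma}])$ in $\PSLtwoR$. This is sound, and it is the approach that actually puts the sign Proposition to work; the one point you must (and implicitly do) verify is that the orientation of $\widehat{N}$, restricted to the annular neighborhood of $\widehat{\gamma}$ and pushed down through the homeomorphic lift, induces the left/right convention used downstairs --- harmless here precisely because the statement is independent of the choice of orientation on the regular neighborhood of $\gamma$. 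In short, the paper's extension is a one-line locality observation while yours is a global reduction to the orientable theorem on $\widehat{N}$; both are valid, and your version trades brevity for a fully explicit verification.
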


Note that the formula giving the trace of $\rho (\gamma)$ does not depend on the choice of orientation on the regular neighborhood of $\gamma.$

\begin{Remark}For an immersed  $1$--sided closed curve $\gamma$, one can compute the trace of the $2$--sided curve $\gamma^2$ using the Theorem above, and then use the formula 
\begin{equation}\label{square}
(\mathrm{tr}\rho([\gamma]))^2 = |\mathrm{tr}\rho([\gamma^2])|-2.
\end{equation}
\end{Remark}

We now have all the tools to prove Theorem \ref{domination}. More precisely, we prove the following.

\begin{Theorem}\label{dominance} 
\begin{enumerate}[(1)]
\item For every non-Fuchsian type-preserving representation $\rho:\Gamma_{g,n}\rightarrow \PGL(2,\mathbb R),$ there exists a Fuchsian type-preserving representation $\rho'$ such that 
$$\big|tr\rho([\gamma])\big|\leqslant\big|tr\rho'([\gamma])\big|$$  
for each $[\gamma]\in\Gamma_{g,n},$ and the strict inequality holds for at least one $\gamma.$

\item Conversely, for almost every Fuchsian type-preserving representation $\rho':\Gamma_{g,n}\rightarrow \PGL(2,\mathbb R)$ and for each $e$ with $|e|<n+k-2$ and $\mathfrak X_e(N_{k,n})\neq\emptyset,$ there exists a type-preserving representation $\rho$ with $e(\rho)=e$ such that
$$\big|tr\rho([\gamma])\big|\leqslant\big|tr\rho'([\gamma])\big|$$  
 for each $[\gamma]\in\Gamma_{g,n},$ and the strict inequality holds for at least one $\gamma.$
\end{enumerate} 
\end{Theorem}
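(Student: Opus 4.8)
The plan is to build both statements on the explicit $\lambda$--length coordinates and the trace formula \eqref{trace_formula}, exploiting the fact that Fuchsian representations are exactly those with all signs $\epsilon(t)$ equal. First I would fix a $\rho$--admissible triangulation $\cT$ (which exists by Theorem \ref{thm_kash}) and record the sign vector $\epsilon\in\{\pm1\}^T$ and the $\lambda$--lengths $\lambda(e)>0$ of $\rho$. The key observation is that in the trace formula the signs $\epsilon(t)$ enter only through the off-diagonal entries of the matrices $M(t)$, and each such entry is $\pm\epsilon(t)\lambda(e_3)$. Passing from $\rho$ to the representation $\rho'$ with the \emph{same} $\lambda$--lengths but with all signs set equal to $+1$ (say) produces matrices $M'(t)$ whose entries are the absolute values of the entries of $M(t)$. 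A word in the $M(t)$ has its trace computed as a signed sum of products of entries; replacing each entry by its absolute value can only increase the sum of the absolute values of these products, and hence $|\Tr(M'(t_{j_1})\cdots M'(t_{j_m}))|\geqslant|\Tr(M(t_{j_1})\cdots M(t_{j_m}))|$ for every closed curve $\gamma$. Since the denominator $\lambda(e_{i_1})\cdots\lambda(e_{i_m})$ is unchanged, \eqref{trace_formula} immediately gives $|\Tr\rho'([\gamma])|\geqslant|\Tr\rho([\gamma])|$ for all $2$--sided $\gamma$, and the relation \eqref{square} extends the inequality to $1$--sided curves. The representation $\rho'$ has all signs equal, hence maximal Euler class by \eqref{euler_sign}, so by Goldman--Palesi it is Fuchsian; this establishes the domination of part (1).

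For the \emph{strictness} in part (1), I would argue that since $\rho$ is non-Fuchsian, it is not of maximal Euler class, so by \eqref{euler_sign} the sign vector $\epsilon$ is not constant: there is at least one triangle $t$ with $\epsilon(t)=-1$ while another has $\epsilon(t)=+1$. I would then exhibit a single simple closed curve $\gamma$ crossing such a sign-changing region for which the entrywise-positivity argument yields a strict inequality $|\Tr\rho'([\gamma])|>|\Tr\rho([\gamma])|$. The cleanest way is to choose $\gamma$ so that its matrix product $M(t_{j_1})\cdots M(t_{j_m})$ contains at least two summands (in the expansion of the trace) that have genuinely different signs in the $\rho$ computation but the same sign after taking absolute values, forcing strict increase; the existence of such a curve on $N_{k,n}$ follows from the connectivity of the dual graph of $\cT$ together with the non-constancy of $\epsilon$.

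For the converse (2), I would run the construction backwards: start from a Fuchsian $\rho'$ with its $\lambda$--lengths on $\cT$, and for a prescribed admissible Euler class $e$ with $|e|<n+k-2$ choose a sign vector $\epsilon$ realizing $e$ via \eqref{euler_sign} that is non-constant, defining $\rho$ to have the same $\lambda$--lengths as $\rho'$ but signs $\epsilon$. The same entrywise comparison gives $|\Tr\rho([\gamma])|\leqslant|\Tr\rho'([\gamma])|$ for all $\gamma$, with strictness for some $\gamma$ exactly as above. The subtlety here, and what the ``almost every'' is for, is that the resulting $\lambda$--lengths must still define a genuine type-preserving representation of Euler class $e$, i.e. they must lie in the open set of Theorem \ref{coordin} cut out by the nondegeneracy of the peripheral holonomies and must be $\rho$--admissible for the chosen $\epsilon$. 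These are finitely many nonvanishing-rational-function conditions, so they fail only on a measure-zero subset of Fuchsian representations; for $\rho'$ in the complementary full-measure set the construction goes through.

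The main obstacle I anticipate is the strictness claim and the admissibility verification in part (2): the entrywise inequality $\sum|\text{products}|\geqslant|\sum\text{products}|$ is automatic, but turning ``$\geqslant$'' into a strict ``$>$'' requires producing an explicit simple (not merely immersed) closed curve whose trace genuinely detects a sign change of $\epsilon$, and simultaneously checking that the sign-flipped $\lambda$--lengths remain admissible and yield the correct Euler class. Controlling this on a \emph{non-orientable} surface $N_{k,n}$, where one must also handle $1$--sided curves through \eqref{square} and keep track of the double-cover signs, is where the argument needs the most care.
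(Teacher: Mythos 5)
Your proposal follows essentially the same route as the paper's proof: fix a $\rho$--admissible triangulation, keep the $\lambda$--lengths and replace the sign vector by the constant positive one (respectively, by a sign vector realizing $e$ for part (2)), compare traces summand-by-summand via the formula \eqref{trace_formula}, invoke \cite{palesi_connected} for Fuchsianness of the maximal representation, handle $1$--sided curves through \eqref{square}, and use the full-measure open subset of $(\R_{>0})^E$ from Theorem \ref{coordin} to justify the ``almost every'' in part (2). Your anticipated obstacle about strictness is lighter than you fear, since the statement quantifies over all $[\gamma]\in\Gamma_{g,n}$ rather than only simple closed curves, so (as in the paper) any closed curve crossing a negatively-signed triangle contributes a nonzero summand with negative coefficient and forces the strict inequality.
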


\begin{proof} For (1), by Section \ref{back_kas}, there exists a $\rho$-admissible ideal triangulation $\mathcal T.$ Choose arbitrarily a decoration $d$ of $\rho,$ and let $(\rho',d')$ be the decorated representation that has the same $\lambda$-lengths of $(\rho,d)$ and positive signs for all the triangles. Since $\rho'$ is maximal, it is Fuchsian by \cite{palesi_connected}. Let $\gamma$ be a $2$-sided curve. Applying Formula (\ref{trace_formula}) to $|tr\rho([\gamma])|$ and $|tr\rho'([\gamma])|,$ we see that they have the same summands with different coefficients $\pm 1$, and the coefficients for the second one are all positive. Since each summand is a product of the $\lambda$-lengths, which is positive, the inequality follows. Since $\rho$ is non-Fuchsian, by (\ref{euler_sign}), there must be an ideal triangle $t$ that has negative sign in $(\rho,d).$ Therefore, if $\gamma$ intersects $t$, then some of the summands in the expression of $|tr\rho([\gamma])|$ have negative coefficients, and the inequality for $\gamma$ is strict. Now for a $1$-sided curve $\gamma,$ the same argument above shows that we have the same comparison for $\gamma^2.$ Then the  results follows from Formula (\ref{square}).

For (2), choose arbitrarily an ideal triangulation $\mathcal T$ of $N_{k,n},$ and let $T$ be the set of ideal triangles of $\mathcal T.$ By Section 2.2, if $\mathfrak X_e(N_{k,n})\neq\emptyset,$ then there exists $\epsilon\in\{\pm 1\}^T$ such that $\sum_{t\in T}\epsilon(t)=2e$ and the subset $\mathfrak{X}_{\mathcal T}(\epsilon) = p(\mathfrak{X}_{\mathcal T}^d(\epsilon))$ is homeomorphic via the lengths coordinate to a full measure open subset of $\mathbb R_{>0}^E.$ For each $\lambda$ in such an open subset, let $(\rho,d)$ be the decorated representation determined by $(\lambda,\epsilon).$ Then $e(\rho)=e.$ On the other hand, $\mathbb R_{>0}^E$ is identified with the decorated Teichm\"uller space via the lengths coordinate, hence $\lambda$ determines a Fuchsian type-preserving representation $\rho'.$ By the same argument in (1), the inequality holds for $\rho$ and $\rho'
,$ and is strict for $\gamma$ intersecting the ideal triangles $t$ with $\epsilon(t)=-1.$
\end{proof}

\section{Balanced triangulations of $N_{1,3}$}\label{back_tri}

Let $N = N_{1, 3}$ be a (topological) thrice-punctured (real) projective plane. We denote by $\Sc = \Sc(N)$ the set of free homotopy classes of essential simple closed curves on $N.$  Recall that a curve is \emph{essential} if it does not bound a disc, a punctured disk, an annulus or a M\"obius strip. Since most of the curve we will consider are essential, we will often omit the word essential. 

\begin{figure}
[ht] \centering
\includegraphics[height=5 cm]{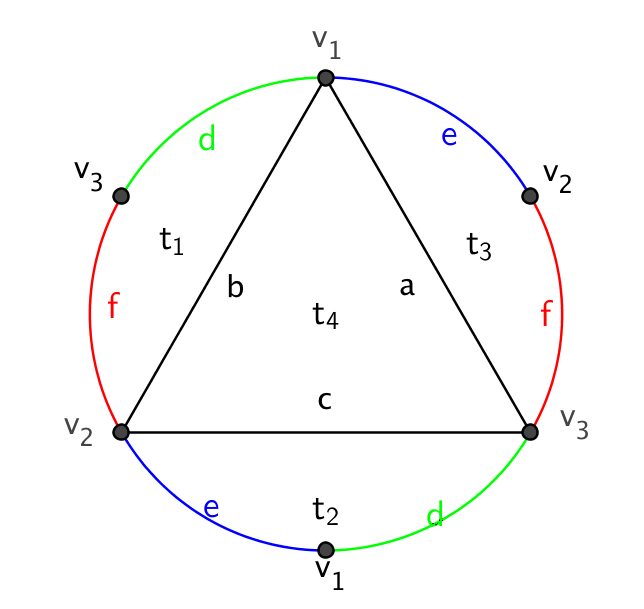}
\includegraphics[height=5 cm]{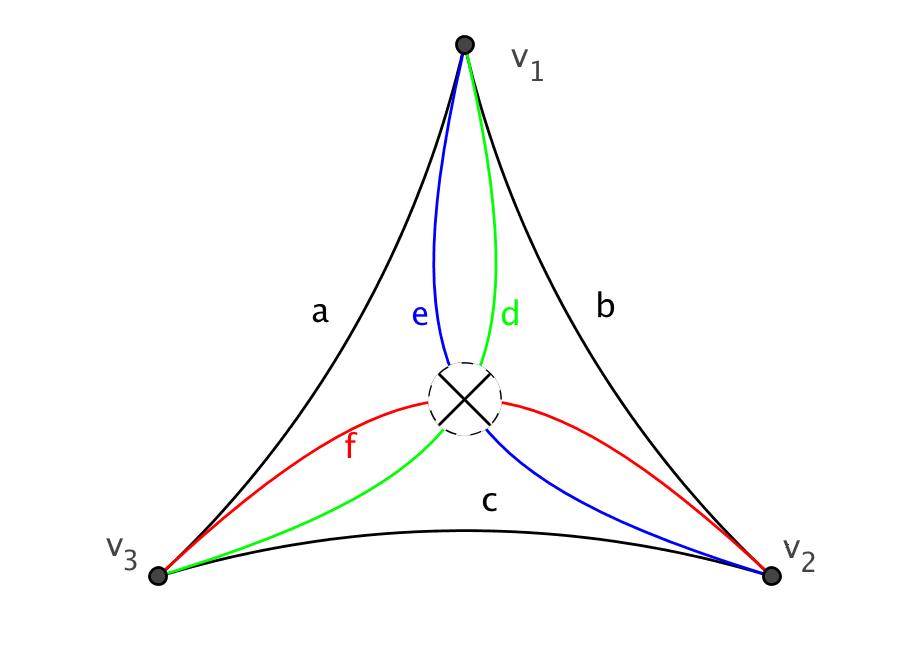}
\caption{Balanced triangulations of $N_{1, 3}$, where $E = \{a, b, c, d, e, f\}$ and $T = \{t_1, t_2, t_3, t_4\}.$}
\label{fig:triang}
\end{figure}

\begin{Proposition}\label{rem1}
  There is a canonical correspondence between:
    \begin{itemize}
   \item (unordered) pairs of (free homotopy classes of) distinct $1$--sided simple closed curves intersecting exactly once;  
  	\item (free homotopy classes of essentials) $2$--sided simple closed curves; 
  	\item (isotopy classes of) arc $e$ joining two different punctures.
  \end{itemize}
\end{Proposition}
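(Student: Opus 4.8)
The plan is to treat the decomposition of $N=N_{1,3}$ induced by an essential $2$--sided curve as the common hub through which all three families are compared, and then to build explicit maps among the families and check they are mutually inverse. The decisive first step is a classification: I claim that \emph{every} essential $2$--sided simple closed curve $\gamma$ on $N$ is separating and cuts $N$ into a twice-punctured disk $D$ (the orientable side) and a once-punctured M\"obius band $M$ (the non-orientable side, which carries the single crosscap). First I would rule out the non-separating case by a parity count: cutting along a $2$--sided non-separating curve preserves $\chi=-2$ but raises the number of ends from $3$ to $5$, and no surface with $5$ ends has $\chi=-2$ (orientable forces $\chi$ odd, non-orientable forces $\chi\le-4$). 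Hence $\gamma$ separates $N$ into pieces with $a$ and $3-a$ punctures, the crosscap lying on exactly one side; running through $a\in\{0,1,2,3\}$ with $\chi$--additivity, essentiality (no disk, punctured disk, annulus, or M\"obius band) eliminates every case except $a=2$, giving precisely the $D\cup M$ splitting.

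With this in hand the bijection between $2$--sided curves and arcs is nearly immediate. Given an arc $e$ joining two distinct punctures, a regular neighborhood of $e$ with its endpoints is an embedded twice-punctured disk, whose boundary $\gamma_e=\partial\nu(e)$ is an essential $2$--sided curve. Conversely, from $\gamma$ I recover the canonical twice-punctured disk $D$ (distinguished as the orientable side) and take its spine. The key local input is that a twice-punctured disk contains a unique isotopy class of embedded arc joining its two punctures, since cutting along any such arc yields an annulus; this pins the arc down up to isotopy. Verifying that $e\mapsto\gamma_e$ and $\gamma\mapsto\mathrm{spine}(D)$ are mutually inverse is then routine.

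For the first family I would proceed as follows. If $\{\mu,\mu'\}$ are distinct $1$--sided simple closed curves meeting once, a regular neighborhood $\nu(\mu\cup\mu')$ retracts to a figure eight, so $\chi=-1$; it is non-orientable and embeds in $N$, which has a single crosscap, so it cannot be the one-holed Klein bottle $N_{2,1}$ (whose orientation double cover $\Sigma_{1,2}$ has positive genus and does not embed in $\Sigma_{0,6}$) and must be the twice-holed projective plane $N_{1,2}$, with two boundary circles. A count then shows the complement of $\nu(\mu\cup\mu')$ in $N$ is a disjoint union of two punctured disks carrying $a$ and $3-a$ punctures (if it were connected one checks $\chi$ would force negative genus). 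Here distinctness is crucial: if $a\in\{0,3\}$ one complementary disk is unpunctured, so $\mu\cup\mu'$ lies in a M\"obius band, forcing $\mu\simeq\mu'$ because every essential curve in a M\"obius band is isotopic to the core --- a contradiction. Hence the split is $1+2$, exactly one boundary circle bounds a twice-punctured disk, and I send $\{\mu,\mu'\}$ to that essential $2$--sided curve $\gamma$, consistently with the previous bijection.

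The hard part will be the reverse direction: given $\gamma$, equivalently the once-punctured M\"obius band $M=N_{1,2}$ on its non-orientable side, I must show $M$ is filled by a \emph{unique} (up to isotopy) pair of distinct $1$--sided curves meeting once. Existence is easy, but uniqueness requires controlling all such pairs, and this is where the real work lies. My plan is to pass to the orientation double cover $\Sigma_{0,4}\to M$, the restriction of $\Sigma_{0,6}\to N_{1,3}$: a $1$--sided curve in $M$ lifts to a single $\sigma$--invariant simple closed curve in $\Sigma_{0,4}$, and a pair meeting once in $M$ lifts to two $\sigma$--invariant curves meeting twice, i.e.\ Farey neighbours in the slope classification of the four-holed sphere (where $i(p/q,r/s)=2|ps-qr|$). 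Determining which slopes are $\sigma$--invariant and showing the $\sigma$--invariant Farey-neighbour filling pair is unique then yields the uniqueness; an alternative is a direct analysis of $\mathrm{Mod}(N_{1,2})$, showing it acts transitively with trivial relevant stabilizer on filling pairs. Once this uniqueness is established, assembling the three mutually inverse assignments completes the proof of the trichotomy.
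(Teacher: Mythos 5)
Your proposal is correct and follows essentially the same route as the paper: both hinge on the fact that an essential $2$--sided simple closed curve splits $N_{1,3}$ into a pair of pants, which contains a unique arc joining the two punctures, and a once-punctured M\"obius band $M$, which contains the distinguished pair of $1$--sided curves, and you build the same three mutually inverse assignments, supplying the Euler-characteristic, parity and embedding arguments (ruling out non-separating curves and the $N_{2,1}$ neighborhood) that the paper leaves implicit. The one step you flag as a plan rather than a proof --- uniqueness of the pair of $1$--sided curves meeting once in $M$ --- is exactly the fact the paper itself asserts without argument (that $N_{1,2}$ contains precisely two $1$--sided simple closed curves, intersecting once), and your strategy via the orientation double cover $\Sigma_{0,4}$ and $\sigma$--invariant Farey-neighbour slopes is sound and would close it (the involution fixes exactly two slopes, which are Farey neighbours).
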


\begin{proof}
The correspondence between the first two sets goes as follows. Given an (unordered) pair $(\a, \b)$ of (free homotopy classes of) $1$--sided simple closed curves intersecting exactly once, there is a unique essential $2$--sided simple closed curve disjoint from $\a \cup \b.$ Conversely, any (free homotopy class of) $2$--sided simple closed curve splits $N$ into two connected components: a pair of pants and a projective plane $M$ with one puncture and one hole which contains exactly two $1$--sided simple closed curves intersecting once. 

For the correspondence between the second and the third sets, we can see that to each arc $e$ joining two different punctures, we can associate the unique $2$--sided simple closed curve $\gamma_e$ going around the two punctures joined by $e$ which does not intersect $e.$ We will say that $\gamma_e$ is {\em associated} with $e.$ Conversely, any (free homotopy class of) $2$--sided simple closed curve splits $N$ into a projective plane with one puncture and one hole and a pair of pants (with two punctures and one hole). This pair of pants contains exactly one arc between the two punctures. See Figure \ref{fig:onesided}.
\end{proof}

\begin{figure}
[hbt] \centering
\includegraphics[height=4 cm]{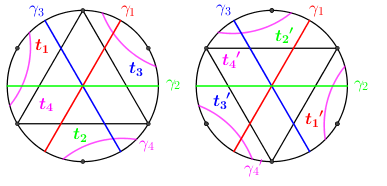}
\caption{The $4$ one sided simple closed curves $\gamma_1, \gamma_2, \gamma_3, \gamma_4$ associated with the triangles in $\cT = \{t_1, t_2, t_3, t_4\}$ (on the left) and the $4$ one sided simple closed curves $\gamma_1, \gamma_2, \gamma_3, \gamma'_4$ associated with the triangles in the new traingulations $\cT' = \{t_1', t_2', t_3', t'_4\}$ obtained from $\cT$ after a triangle switch $T_4.$}
\label{fig:onesided}
\end{figure}

We now define certain triangulations of $N_{1,3}$ which will be important in the rest of the paper. A triangulation $\cT$ of $N_{1,3}$ is called {\em balanced} if each edge of $\cT$ joins two distinct punctures; or, equivalently, if each triangle of $\cT$ has vertices in the three (distinct) punctures $v_1$, $v_2$ and $v_3$ of $N_{1,3}.$ Let $a,b,c,d,e,f$ be the edges of the triangulation described in Figure \ref{fig:triang}. We have four triangles $t_1, t_2, t_3, t_4$ corresponding to triangles with edges $t_1= \{b,d,f\}$ , $t_2 = \{c,d,e\}$, $t_3 = \{a,e,f\}$ and $t_4 = \{a,b,c\}$ respectively. There are two edges between the vertex $v_1$ and $v_2$ (the edges $a$ and $d$), similarly we have two edges between $v_2$ and $v_3$ (the edges $b$ and $e$) and two edges between $v_3$ and $v_1$ (the edges $c$ and $f$).

\begin{figure}
[hbt] \centering
\includegraphics[width=\textwidth]{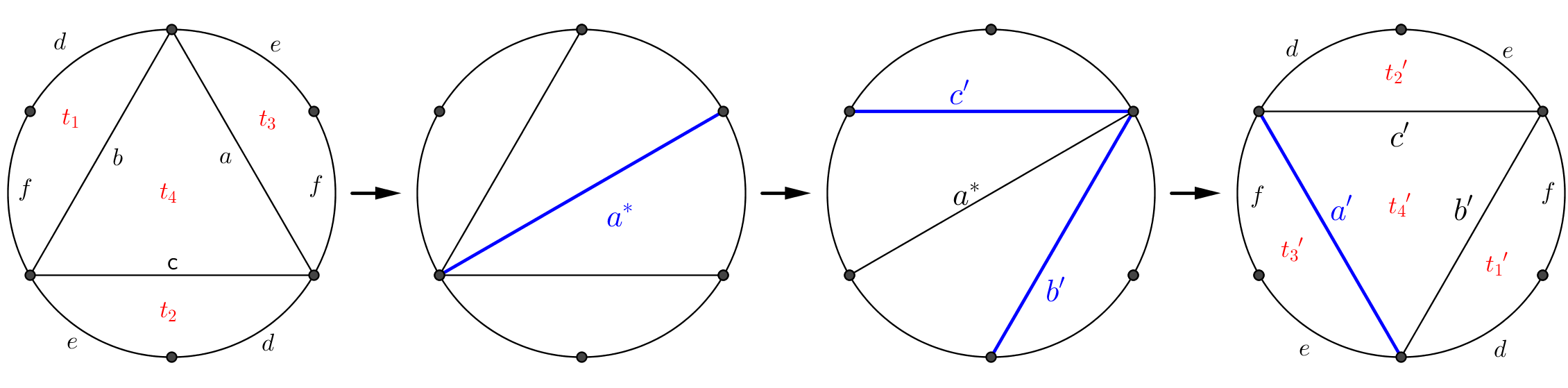}
\caption{The four edge flips that generate a triangle switch.}
\label{tri_switch}
\end{figure}

There are elementary moves between balanced triangulations called \emph{triangle switches}, see Figure \ref{tri_switch}.  Given a triangle $t$ in a balanced triangulation, one can cut the surface along the three edges that do not belong to this triangle. (For example, in the case of Figure \ref{fig:triang}, if $t = t_4 =\{a, b, c\}$, one should cut along the edges $\{d, e, f\}.$) This gives an ideal hexagon with the triangle $t$ in the interior. The action of a triangle switch on the balanced triangulation is given by turning the triangle $t$ `upside down' inside this ideal hexagon to get the triangle $t'.$ Each other triangle $t_i$ of the triangulation for $i = 1, 2, 3$, is sent to the triangle $t'_i$ which is `opposite' to $t_i$, see Figure \ref{tri_switch}. These triangle switches can be expressed as a sequence of elementary flips of the triangulation. Using the notations of Figure \ref{tri_switch}, the switch is equivalent to the following sequence of elementary moves:
\begin{itemize}
  \item Flip the edge $a$ into $a^*;$
  \item Flip edges $b$ into $c'$ and $c$ into $b';$ (those two flips commute)
  \item Flip $a^*$ into $a';$
\end{itemize}
The triangles of the new triangulation $\cT' =\{t'_1, t'_2, t'_3, t'_4\}$ are the following:  $t'_1= \{b',d,f\}$ , $t'_2 = \{c',d,e\}$, $t'_3 = \{a',e,f\}$ and $t'_4 = \{a',b',c'\}.$ By symmetry of the switch in the arcs $a,b,c$, we will also get the same final triangulation when flipping $b$ or $c$ first.

The balanced triangulation graph $\mathfrak{T}$ of $N$ is the abstract simplicial graph whose vertices are isotopy classes of balanced triangulations of $N$ and whose edges are defined by triangle switches. This graph is the one skeleton of the simplicial complex dual to the balanced arc complex.  We will show that $\mathcal{T}(N)$ is isomorphic, as abstract simplicial graph, to the one skeleton of the complex $\Upsilon$ defined in \cite{maloni-palesi}, which is the simplicial complex whose $k$--simplices of $\Upsilon$ are given by subsets of $4-k$ distinct (isotopy classes of) $1$--sided simple closed curves in $N$ that pairwise intersect once. Note that $\Upsilon$ is the simplicial dual to the complex of $1$-sided curves $\mathcal{CC}(N)$ of $N$, see Scharlemann\,\cite{sch_the}. (Recall that an isomorphism $f\co X \to Y$ between abstract simplicial complexe $X$ and $Y$ is a bijection $f\co \mathrm{Vert}(X) \to \mathrm{Vert}(Y)$ such that $x \in X \iff f(x) \in Y.$)  In \cite{maloni-palesi} and \cite{hua_sim}, it was shown that the one-skeleton of the simplicial complex $\Upsilon$ is the Cayley graph of a finite index subgroup of the mapping class group generated by four involutions $\theta_1, \ldots, \theta_4$, as we will discuss and use in Section \ref{ergodicity2}.

\begin{Proposition}\label{equiv_tri}
  There is an isomorphism $f\co \mathfrak{T} \to \Upsilon.$
\end{Proposition}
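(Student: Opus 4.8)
The plan is to define $f$ on vertices via the association of Figure~\ref{fig:onesided}, sending a balanced triangulation $\cT=\{t_1,t_2,t_3,t_4\}$ to the (unordered) set $\{\gamma_1,\gamma_2,\gamma_3,\gamma_4\}$ of the four $1$--sided simple closed curves determined by its triangles, and then to show that this is a bijection onto the vertex set of $\Upsilon$ carrying triangle switches to the edge--moves of $\Upsilon$. The first step is to check that $f$ is well defined and lands in $\mathrm{Vert}(\Upsilon)$. I would first observe that the dual graph of a balanced triangulation of $N_{1,3}$ is the complete graph $K_4$: since all six edges join distinct punctures, each of the four triangles meets each of the other three along exactly one of the six edges, as one reads off directly for the triangulation of Figure~\ref{fig:triang} (there $t_i\cap t_j$ is a single edge for every $i\neq j$). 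Via Proposition~\ref{rem1}, the edge shared by $t_i$ and $t_j$ corresponds to a pair of $1$--sided curves meeting once, and the point is that this pair is exactly $\{\gamma_i,\gamma_j\}$; hence the six edges realise the six pairs of the $4$--element family $\{\gamma_1,\gamma_2,\gamma_3,\gamma_4\}$, so these four curves are distinct and pairwise intersect once. Since $4$ is the maximal size of such a family in $N_{1,3}$ (by the description of $\Upsilon$ in \cite{maloni-palesi} and Scharlemann~\cite{sch_the}), the set $f(\cT)$ is a vertex of $\Upsilon$.

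Next I would prove bijectivity on vertices using Proposition~\ref{rem1} in both directions. For injectivity, the four curves of $f(\cT)$ recover, through their six pairs, the six associated $2$--sided curves and hence the six arcs joining distinct punctures, which are precisely the edges of $\cT$; so $f(\cT)$ determines $\cT$ up to isotopy. For surjectivity, starting from a vertex of $\Upsilon$, that is, four $1$--sided curves pairwise meeting once, I would form the six arcs associated to the six pairs and check that, up to isotopy, they are pairwise disjoint and cut $N_{1,3}$ into four triangles, i.e. assemble into a balanced triangulation mapped to the given vertex. The numerics are consistent, since a balanced triangulation of $N_{1,3}$ has $E=-3\chi(N_{1,3})=6$ edges and $T=-2\chi(N_{1,3})=4$ triangles, matching the $\binom{4}{2}=6$ pairs and the $4$ curves.

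For the edge correspondence, I would invoke the explicit effect of a triangle switch described after Figure~\ref{tri_switch}: the switch $T_i$ replaces $t_i$ by $t_i'$ while leaving the other three triangles unchanged, and by Figure~\ref{fig:onesided} this replaces $\gamma_i$ by a new curve $\gamma_i'$ and fixes $\gamma_j$ for $j\neq i$. Thus $f(\cT)$ and $f(\cT')$ agree in exactly three of their four curves, so they span an edge of $\Upsilon$. Conversely, an edge of $\Upsilon$ is a common $3$--element subset $\{\gamma_1,\gamma_2,\gamma_3\}$; these three curves pin down three of the triangles (equivalently three common arcs), and completing to a balanced triangulation is the choice of the fourth curve, for which there are exactly the two possibilities related by the triangle switch. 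This shows $f$ maps the edges of $\mathfrak{T}$ bijectively onto those of $\Upsilon$, completing the isomorphism.

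I expect the main obstacle to be the genuinely topological content hidden in the first and second steps: proving that the pairs of $1$--sided curves attached to the six edges of a balanced triangulation really assemble into the six pairs of a single $4$--element family (equivalently, that the four triangle--curves pairwise intersect exactly once), and conversely that the six arcs built from such a family are disjoint and triangulate $N_{1,3}$. This requires looking at the local picture near the crosscap of $N_{1,3}$ rather than merely counting; once it and the local model of Figure~\ref{fig:onesided} are established, the switch/edge correspondence and the bijection are essentially combinatorial.
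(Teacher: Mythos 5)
Your proposal is correct and follows essentially the same route as the paper: the paper likewise defines $f$ on vertices by sending each triangle $t_i$ to the unique $1$--sided simple closed curve in the one-holed projective plane $N_{1,3}\setminus t_i$, and establishes the edge correspondence by noting that a triangle switch along $t_i$ changes only $\gamma_i$, while conversely a triple of pairwise once-intersecting $1$--sided curves determines (via Proposition \ref{rem1}) an ideal triangle and hence a switch. The topological verifications you flag as the main obstacle --- that the four triangle-curves pairwise intersect exactly once, and that the six arcs built from such a quadruple are disjoint and assemble into a balanced triangulation --- are exactly the steps the paper discharges by citing Huang--Norbury \cite[Lemma 6]{hua_sim} together with Proposition \ref{rem1}, so your plan requires no ideas beyond that reference.
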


Since $\Upsilon$ is connected, we can see that there is  a unique sequence of triangle switches between any two balanced triangulations.

\begin{proof} 
  First, let's define the bijection between $\mathrm{Vert}(\mathfrak{T})$ and $\mathrm{Vert}(\Upsilon).$ This is already discussed in Huang--Norbury \cite[Lemma 6]{hua_sim}. Let $\cT$ be a balanced triangulation and $t_i$ one of its triangles.  The surface $N_{1,3} \setminus t_i$ is a one-holed projective plane, which has only one non-trivial simple closed curve. Hence there is a unique $1$--sided simple closed curve on the surface that does not intersect the triangle $t_i.$ Using this, we can see that a balanced triangulation gives rise to a quadruple of one-sided curves $\alpha_1 , \dots , \alpha_4$, and that these four curves pairwise intersect exactly once. Reciprocally, let $\alpha_1 , \dots , \alpha_4$ be a quadruple of $1$-sided simple closed curves pairwise intersecting once. For each choice of two curves $\alpha_i$ and $\alpha_j$, the surface $N \setminus (\alpha_i \cup \alpha_j) $ is a disjoint union of a punctured disc and a twice punctured disc. So to each pair of curve, one can associate the arc joining the two punctures in the twice punctured disc. This gives $6$ disjoint arcs that form a balanced triangulation.

Second, let's see that $e \in \mathrm{Edges}(\mathfrak{T})$ if and only if $f(e) \in \mathrm{Edges}(\Upsilon)$
	Let $\cT = (t_1 , \dots , t_4)$ be a balanced triangulation, and $\cT' =(t'_1 , \dots , t'_4)$ be the balanced triangulation obtained from $\cT$ after a triangle switch along $t_i.$ Let $(\alpha_1 , \dots , \alpha_4)$ and $(\alpha'_1 , \dots , \alpha'_4)$ be the corresponding $4$-tuples of $1$--sided simple closed curves associated to these two triangulations. We have that $\alpha_j = \alpha'_j$ for all $j \neq i$ as one can seen in Figure \ref{fig:onesided}, so the two sets of $(\alpha_1 , \dots , \alpha_4)$ and $(\alpha'_1 , \dots , \alpha'_4)$ define a triple of $1$--sided simple closed curves pairwise interecting once. Conversely, using Proposition \ref{rem1} we can see that a triple $(\alpha_i, \alpha_j, \alpha_k)$ of $1$--sided simple closed curves pairwise interecting onceis associated to a triple of arcs joining two different punctures and so an ideal triangle $t$ with vertices in the three punctures. We can then associate to $(\alpha_i, \alpha_j, \alpha_k)$  the triangle switch $S_t.$
\end{proof}

\begin{figure}
[hbt] \centering
\includegraphics[height=5 cm]{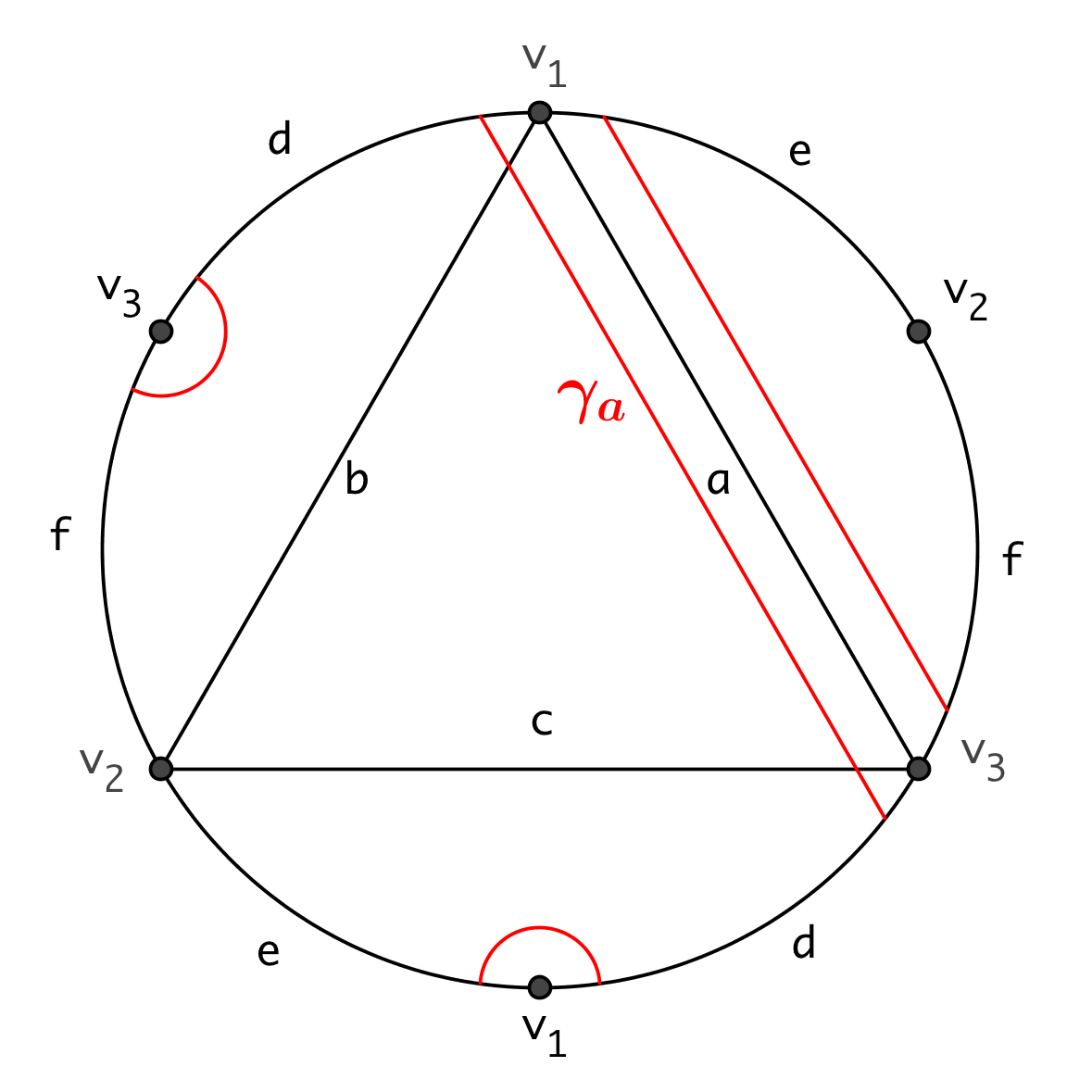}
\caption{The curve $\gamma_a$ associated to the edge $a$ of the triangulation $\cT$ of $N_{1, 3}.$}
\label{curves_asso}
\end{figure}

\section{Connected components of $\mathfrak{X}(N_{1, 3})$}\label{sec:conn}

In this section we will prove Theorem \ref{conncomp}, using a new parametrization of the space $\mathfrak{X}(N_{1, 3})$ by so-called \emph{triangle coordinates}.

\subsection{Triangle coordinates} 

Let $\rho$ be a type-preserving representation of $\pi_1(N_{1,3})$ and let $d$ be a decoration of $\rho.$ Suppose $\cT$ is a $\rho$--admissible balanced triangulation of $N_{1,3}$ and let $V$, $E$ and $T$ be the set of vertices, edges and triangles of $\cT$, respectively. 

Given a decorated representation $[(\rho, d)]\in \mathfrak{X}^d_{+1}(N_{1,3})$, let $(\lambda, \epsilon) = (\lambda ([(\rho, d)]), \epsilon ([(\rho, d)])) \in \R_{>0}^E \times \{\pm 1\}^T$ be the $\lambda$--lengths of the edges and the signs of the triangles. For a triangle $t_i \in \cT$ whose edges are given by $e_1 , e_2 , e_3$, we define the \emph{$i$-th triangle coordinate} as the product $\lambda (e_1) \lambda (e_2) \lambda (e_3).$

More precisely, let $\cT = \{t_1, t_2, t_3, t_4\}$ be the triangulation described in Figure \ref{fig:triang}, where $t_1= \{b,d,f\}$ , $t_2 = \{c,d,e\}$, $t_3 = \{a,e,f\}$ and $t_4 = \{a,b,c\}.$ The triangle coordinates are defined by the following quantities.
\begin{itemize}
  \item $X_1 = \lambda(b)\lambda(d)\lambda(f)$, corresponding to the triangle $t_1;$
  \item $X_2 = \lambda(c)\lambda(d)\lambda(e)$, corresponding to the triangle $t_2;$
  \item $X_3 = \lambda(a)\lambda(e)\lambda(f)$, corresponding to the triangle $t_3;$
  \item $X_4 = \lambda(a)\lambda(b)\lambda(c)$, corresponding to the triangle $t_4.$
\end{itemize}

Theorem \ref{thm_kash} proves that the set $\mathfrak{X}_{\cT}(N_{1, 3})$ of representations $[\rho] \in \mathfrak{X}(N_{1, 3})$ such that $\cT$ is $\rho$--admissible is an open and dense subset of $\mathfrak{X}(N_{1, 3});$ see Section \ref{back_kas}. The following result is an analog of Lemma 6.2 of \cite{yan_ont} in our context, proving that the projectivized coordinates $[X_1 , \dots , X_4]$, together with the signs parameters, parametrize the components of $\mathfrak{X}_{\cT}(N_{1, 3}).$

\begin{Lemma}\label{lem_phi}
  Given $\mu \in (\R_{>0})^V$ and $\lambda \in (\R_{>0})^E$, let $(\R_{>0})^E$ be the principal $\left((\R_{>0})^V\right)$--bundle given by $(\mu \cdot \lambda) (e_{i j}) = \mu(v_i) \lambda(e_{i j}) \mu(v_j)$, and let $(\R_{>0})^4$ be the principal $(\R_{>0})$--bundle given by $r \cdot (X_1, X_2, X_3, X_4) = (r X_1, r X_2, r X_3, r X_4).$

Then the map 
\begin{align*}
	\Phi \co (\R_{>0})^E &\longrightarrow  (\R_{>0})^4 \\ (\lambda(e_{1,2}), \cdots, \lambda(e_{3,4})) &\longmapsto (X_1, \cdots, X_4) \end{align*}
	  induces a diffeomeorphism $$\Phi^\ast \co (\R_{>0})^E / (\R_{>0})^V \to (\R_{>0})^4 /\R_{>0}.$$
\end{Lemma}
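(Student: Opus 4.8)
The plan is to exploit the equivariance of $\Phi$ and reduce the statement to comparing two explicit charts of the two quotients, each identified with $(\R_{>0})^3$. First I would record the equivariance. Writing $\mu=(\mu_1,\mu_2,\mu_3)\in(\R_{>0})^V$ and using that in a \emph{balanced} triangulation the three edges of every triangle $t_i$ join the three distinct pairs of punctures exactly once, each coordinate scales by the \emph{same} factor: since $t_1=\{b,d,f\}$ has its edges joining $\{v_2v_3,\,v_1v_2,\,v_3v_1\}$, one computes $X_1(\mu\cdot\lambda)=(\mu_2\mu_3)(\mu_1\mu_2)(\mu_3\mu_1)X_1(\lambda)=(\mu_1\mu_2\mu_3)^2X_1(\lambda)$, and identically for $X_2,X_3,X_4$. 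Hence $\Phi(\mu\cdot\lambda)=(\mu_1\mu_2\mu_3)^2\,\Phi(\lambda)$, so $\Phi$ intertwines the $(\R_{>0})^V$-action with the scaling action of $\R_{>0}$ through the surjective homomorphism $\mu\mapsto(\mu_1\mu_2\mu_3)^2$, and therefore descends to a well-defined smooth map $\Phi^\ast$. Both quotients have dimension $6-3=4-1=3$, so a diffeomorphism is at least dimensionally possible.

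Next I would build a convenient chart on the source. The edges $a,d$ join $v_1,v_2$; the edges $b,e$ join $v_2,v_3$; the edges $c,f$ join $v_3,v_1$; so the three ratios $\theta_1=\lambda(d)/\lambda(a)$, $\theta_2=\lambda(b)/\lambda(e)$, $\theta_3=\lambda(c)/\lambda(f)$ are $(\R_{>0})^V$-invariant. I claim $(\theta_1,\theta_2,\theta_3)$ induces a diffeomorphism $c_1\co(\R_{>0})^E/(\R_{>0})^V\xrightarrow{\ \sim\ }(\R_{>0})^3$. To see this, observe that the system $\mu_1\mu_2=\lambda(a)^{-1}$, $\mu_2\mu_3=\lambda(b)^{-1}$, $\mu_3\mu_1=\lambda(c)^{-1}$ has a unique positive solution (multiply all three to get $\mu_1\mu_2\mu_3$, then divide), so each orbit has a unique representative with $\lambda(a)=\lambda(b)=\lambda(c)=1$; in this gauge $(\lambda(d),\lambda(e),\lambda(f))\in(\R_{>0})^3$ are free and determine, and are determined by, the $\theta_i$.

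I would then build the matching chart on the target out of the degree-zero combinations of the $X_i$. A direct computation gives $X_1X_2/(X_3X_4)=\lambda(d)^2/\lambda(a)^2$, $X_1X_4/(X_2X_3)=\lambda(b)^2/\lambda(e)^2$, and $X_2X_4/(X_1X_3)=\lambda(c)^2/\lambda(f)^2$ whenever $X=\Phi(\lambda)$; each is scale-invariant, so taking positive square roots defines a smooth map $c_2\co(\R_{>0})^4/\R_{>0}\to(\R_{>0})^3$. To see $c_2$ is a diffeomorphism, pass to logarithms $x_i=\log X_i$: the three components become the linear forms $x_1+x_2-x_3-x_4$, $x_1-x_2-x_3+x_4$, $-x_1+x_2-x_3+x_4$ (up to the harmless factor $2$), whose coefficient rows, together with the quotient direction $(1,1,1,1)$, form a mutually orthogonal $\pm1$ (Hadamard) system and hence a nonsingular matrix; thus the map descends to a linear isomorphism $\R^4/\langle(1,1,1,1)\rangle\to\R^3$, and exponentiating shows $c_2$ is a diffeomorphism.

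Finally, the square-root identities above say precisely $c_1=c_2\circ\Phi^\ast$, so $\Phi^\ast=c_2^{-1}\circ c_1$ is a composition of diffeomorphisms and hence a diffeomorphism, completing the proof. The only genuinely substantive points—where I expect the main obstacle to lie—are verifying that the invariant ratios $\theta_i$ form a \emph{complete} coordinate system on $(\R_{>0})^E/(\R_{>0})^V$ (handled by the explicit gauge-fixing), and checking that the intertwining linear map on the log side is nondegenerate (handled by the Hadamard orthogonality); the monomial bookkeeping relating the $X_iX_j$ products to the $\theta_i^2$ is routine.
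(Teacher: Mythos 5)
Your proof is correct, and I checked the computations it rests on: in a balanced triangulation each triangle has exactly one edge in each of the three puncture--pair classes, so every $X_i$ scales by the same factor $(\mu_1\mu_2\mu_3)^2$; the monomial identities $X_1X_2/(X_3X_4)=\big(\lambda(d)/\lambda(a)\big)^2$, $X_1X_4/(X_2X_3)=\big(\lambda(b)/\lambda(e)\big)^2$, $X_2X_4/(X_1X_3)=\big(\lambda(c)/\lambda(f)\big)^2$ all hold with the paper's labeling $t_1=\{b,d,f\}$, $t_2=\{c,d,e\}$, $t_3=\{a,e,f\}$, $t_4=\{a,b,c\}$; and the rows $(1,1,-1,-1)$, $(1,-1,-1,1)$, $(-1,1,-1,1)$, $(1,1,1,1)$ are indeed pairwise orthogonal, so your log-linear map is nondegenerate. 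However, your route is organized differently from the paper's. The paper proves the same equivariance for well-definedness, then establishes surjectivity by writing down an explicit section (a formula for each $\lambda(e_{i,j})$ as a monomial in the $x_i$'s, modulo some evident typographical slips in the indexing) and injectivity by explicitly solving for the gauge element $\mu\in(\R_{>0})^V$ carrying $\lambda$ to $\lambda'$ whenever $\Phi(\lambda')=r\cdot\Phi(\lambda)$, with smoothness asserted from the formulas. You instead build complete invariant charts on both quotients --- the ratios of the two edges joining each pair of punctures on the source, justified by the gauge slice $\lambda(a)=\lambda(b)=\lambda(c)=1$ (which implicitly uses freeness of the $(\R_{>0})^V$-action, worth stating), and the degree-zero monomials in the $X_i$ on the target, justified by the Hadamard orthogonality --- and then factor $\Phi^\ast=c_2^{-1}\circ c_1$. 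What your approach buys is a fully transparent treatment of the smooth structures on the two quotients and of the smoothness of $\Phi^\ast$ and its inverse, which the paper's proof leaves to a one-line remark; what the paper's approach buys is the explicit inverse and section formulas themselves, which are reused later (the section reappears in the proof of Proposition 5.1 to produce representations with prescribed triangle coordinates). Note also that your gauge-fixing computation is, in substance, the same algebra as the paper's injectivity step, repackaged as a slice chart rather than as an explicit $\mu$.
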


\begin{proof}
	For any $\mu \in (\R_{>0})^V$ and $\lambda \in (\R_{>0})^E$, we have $\Phi(\mu \cdot \lambda) = (\prod_{i = 1}^{3}(\mu(v_i))^2) \Phi(\lambda) = r \cdot \Phi (\lambda)$, and hence  $\Phi^\ast$ is well-defined. 
  
  Moreover, we have that
	 $$\Phi\left(\displaystyle\frac{1}{(x_1 x_2 x_3 x_4)^{\frac 13 } } (x_3 x_4, x_1 x_4, x_2 x_4, x_1 x_2, x_2 x_3, x_1 x_3 )\right) = (x_1, x_2, x_3, x_4)$$ for all $(x_1, x_2, x_3, x_4) \in  (\R_{>0})^4$, so that $\Phi^\ast$ is surjective.
  
  For the injectivity, suppose that $\Phi(\lambda') = r \cdot \Phi(\lambda)$, then let 
  \begin{enumerate}
    \item[] $v_1(\lambda) = (\lambda(\epsilon_{1, 2})\lambda(\epsilon_{3, 1}))^2 \lambda(\epsilon_{2,3})\lambda(\epsilon_{2,1})\lambda(\epsilon_{3,2})\lambda(\epsilon_{1,3});$
    \item[] $v_2(\lambda) = (\lambda(\epsilon_{1, 2})\lambda(\epsilon_{2,3}))^2 \lambda(\epsilon_{3,1})\lambda(\epsilon_{2,1})\lambda(\epsilon_{3,2})\lambda(\epsilon_{1,3});$
    \item[] $v_3(\lambda) = (\lambda(\epsilon_{2,3})\lambda(\epsilon_{3, 1}))^2 \lambda(\epsilon_{1,2})\lambda(\epsilon_{2,1})\lambda(\epsilon_{3,2})\lambda(\epsilon_{1,3}).$
  \end{enumerate}
  and let $\mu(v_i) = r^{-2} \displaystyle\frac{v_i(\lambda')}{v_i(\lambda)}.$ Then $\lambda'(e_{i j}) = \mu(v_i) \lambda(e_{i j}) \mu(v_j)$, and so $\Phi^\ast$ is injective.
  
  Finally, the differentiability of $\Phi^\ast$ and $(\Phi^\ast)^{-1}$ follows from the definition of $\Phi.$
\end{proof}

Let $\cT$ be a balanced triangulation of $N.$ Since we can identify $(\R_{>0})^4/\R_{>0}$ with the simplex 
$$\Delta = \{ (x_1 , x_2 , x_3 , x_4) \in \R_{>0} \, | \, x_1 + x_2 + x_3 + x_4 = 0 \},$$ 
we have that $\mathfrak{X}_{\cT} (N_{1,3}) = \bigsqcup_{\epsilon \in \{\pm 1\}^{T}}\mathfrak{X}_{\cT}(\epsilon)$ and $\Psi_{\cT}$ induces maps
$\mathfrak{X}_{\cT} (N_{1,3}) \rightarrow \Delta$ which are diffeomorphism on their image. These images are open subset of $\Delta$ defined as the complement of the zeros of certain rational functions, and we will discuss them in next section.

\subsection{Holonomy of cusps}

Let $v_1$, $v_2$ and $v_3$ be the three punctures of $N_{1,3}.$ If $\pi_1(N_{1,3}) = \langle \g_1, \g_2, \g_3 \rangle$ with $\g_i$ the generators described in Figure \ref{fig:onesided}, then the peripheral simple closed curves going around $v_1$, $v_2$, and $v_3$ correspond to the elements $[\g_1\g_2]$, $[\g_2\g_3]$ and $[\g_3\g_1]$, respectively. The images of these $2$--sided peripheral simple closed curves are parabolic elements in $\PSL(2, \R)$, and, up to conjugation, they are represented by matrices $\pm \begin{bmatrix} 
  1 & x\\ 
  0 & 1 
\end{bmatrix}$ where $x\in \R$ and where the sign of $x$ is well-defined.

We can express the holonomy of these curves using the triangle coordinates. The following result is a simple computation using the formulas of Section \ref{back_trace}.

\begin{Lemma}[Holonomy of cusps]\label{cusp} Let $\rho$ be a representation with coordinates $(X , \epsilon) \in \Delta \times \{ -1 , 1 \}^4.$
  Up to conjugation, the $\rho$--images of the peripheral elements $ [\alpha\beta],   [\beta\gamma]$ and $ [\alpha\gamma] \in \pi_1(N_{1,3})$ are:
  \begin{equation*}
    \begin{aligned}
      \rho([\g_1\g_2])&= \pm \begin{bmatrix} 
        1 & \epsilon(t_2)X_1+\epsilon(t_1)X_2+\epsilon(t_4)X_3+\epsilon(t_3)X_4\\ 
        0 & 1 
      \end{bmatrix}\\    
      \rho([\g_2\gamma_3])&= \pm \begin{bmatrix} 
        1 & \epsilon(t_3)X_1+\epsilon(t_4)X_2+\epsilon(t_1)X_3+\epsilon(t_2)X_4\\ 
        0 & 1 
      \end{bmatrix}\\  
      \rho([\gamma_3\g_1])&= \pm \begin{bmatrix} 
         1 & \epsilon(t_4)X_1+\epsilon(t_3)X_2+\epsilon(t_2)X_3+\epsilon(t_1)X_4\\ 
         0 & 1 
      \end{bmatrix}  
    \end{aligned}
  \end{equation*}
\end{Lemma}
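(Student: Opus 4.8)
The plan is to feed each of the three peripheral curves $[\gamma_1\gamma_2]$, $[\gamma_2\gamma_3]$, $[\gamma_3\gamma_1]$ into the matrix machinery of Section~\ref{back_trace} and then rewrite the output in the triangle coordinates $X_1,\dots,X_4$. I would begin by fixing one puncture and putting its peripheral curve into standard position as a small loop encircling that puncture once. Because the triangulation $\cT$ is balanced, each of the four triangles $t_1,t_2,t_3,t_4$ has exactly one corner at the chosen puncture, so the loop cuts one corner of every triangle and crosses precisely the four (of the six) edges incident to that puncture; reading these off Figure~\ref{fig:triang} is the only place the combinatorics of the picture enters. Since the loop winds once around a single vertex, it turns the same way at every corner, so after orienting its (orientable) regular neighborhood all four matrices $M(t)$ are simultaneously triangular, the entry $e_3$ in each being the edge of that triangle opposite the chosen puncture.

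Next I would multiply these four triangular matrices. The two diagonal entries of each $M(t)$ are the $\lambda$-lengths of the two edges of $t$ at the puncture, and around the loop the set of ``incoming'' edges equals the set of ``outgoing'' edges; hence the two diagonal entries of the product coincide, both equal to the product $D$ of the $\lambda$-lengths of the four edges met. This already forces the image to be parabolic, as type-preservation requires. The off-diagonal entry is a sum of four monomials, the one contributed by the corner at $t$ carrying the factor $\epsilon(t)\lambda(e_3)$. Normalizing to a unipotent matrix and using the conjugation freedom by $\mathrm{diag}(s,s^{-1})$ --- which multiplies the off-diagonal entry by the \emph{positive} scalar $s^2$ and so changes neither the conjugacy class nor the sign of that entry --- I would rescale to clear all denominators; each corner's monomial then collapses to $\epsilon(t)X_i$ for a single triangle coordinate. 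The one point needing care is the bookkeeping: the corner at $t$ produces the coordinate $X_i$ of the triangle paired with $t$ under the relevant involution of $\{1,2,3,4\}$, which is exactly what yields the ``shifted'' pairing $\epsilon(t_2)X_1+\epsilon(t_1)X_2+\epsilon(t_4)X_3+\epsilon(t_3)X_4$ of the statement. Running the same computation for the other two punctures, whose incident edges and corners are obtained by permuting the roles of the three vertices, gives the remaining two lines, the three pairings being governed by the three nontrivial involutions of the Klein four-group.

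I expect the main obstacle to be entirely elementary and combinatorial rather than analytic: correctly matching the labels, i.e.\ verifying from Figures~\ref{fig:triang} and~\ref{fig:onesided} which four edges and which opposite edges $e_3$ the loop around each $v_i$ actually meets, so that the index permutation --- and hence the precise pairing of each $\epsilon(t)$ with the correct $X_i$ --- comes out right. A mislabeling would merely permute the $X_i$ and produce a wrong statement, whereas the algebra itself is the routine triangular multiplication already packaged in Theorem~\ref{SunYang} and Formula~\eqref{trace_formula}. Once the labels are fixed, the signs $\epsilon(t)$ pass through the product linearly and untouched, since each off-diagonal monomial uses the off-diagonal slot of a single $M(t)$; and because $D>0$ and the residual rescaling is by a positive scalar, the sign of the off-diagonal entry is well defined and equals the sign of the corresponding parabolic, so the formulas simultaneously record the sign $s(v_i)$ of each cusp.
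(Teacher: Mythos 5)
Your proposal is correct and is essentially the computation the paper itself invokes: the paper proves Lemma~\ref{cusp} by exactly this ``simple computation using the formulas of Section~\ref{back_trace},'' i.e.\ multiplying the four simultaneously triangular matrices $M(t)$ met by a small loop around each puncture, observing that the equal diagonal entries force a parabolic, and normalizing by a positive diagonal conjugation so that each corner's monomial becomes $\epsilon(t)X_{\tau(t)}$ for the appropriate involution $\tau$ of $\{1,2,3,4\}$. Your bookkeeping of the pairings (the three nontrivial Klein four-group involutions, one per puncture) and of the well-definedness of the sign under positive rescaling matches the paper's intended argument.
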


Recall that $\epsilon$ determines the Euler class of the representation as $ \displaystyle e(\rho) = \sum_{i=1}^4 \epsilon (t_i).$ So for  $\{i, j, k, l\} = \{1, 2, 3, 4\}$, let $\epsilon_{i}$ and $\epsilon_{i, j}$ in $\{-1, +1\}^T$ be defined as follows
  \begin{itemize}
	\item $\epsilon_{i}(t_i) = - 1$ and $\epsilon_{i}(t_k) = + 1$ for all $k \neq i;$
    \item $\epsilon_{i, j}(t_i) = \epsilon_{i, j}(t_j) = -1$ and $\epsilon_{i, j}(t_k) = \epsilon_{i, j}(t_l) = 1.$
  \end{itemize}

So we get that:
	\begin{itemize}
		\item $e(\rho) = 0$ if and only if $\epsilon = \epsilon_{i,j}$ for some $\{i,j\} \subset \{ 1, 2 , 3 , 4\}$, 
		\item  $e(\rho) = + 1$ if and only if $\epsilon = \epsilon_{i}$ for some $i \in \{1,2,3,4\}$, and
		\item  $e(\rho) = - 1$ if and only if $\epsilon = - \epsilon_{i}$ for some $i \in \{1,2,3,4\}.$
	\end{itemize}

For the representation $\rho$ to be type-preserving, the terms appearing in these matrices need to be non-zero. So we define the following subsets of $\Delta$: 
	\begin{enumerate}
    \item[] $\Delta^{i, j} = \Delta^{k, l} = \{(x_1, x_2, x_3, x_4) \in \Delta \mid x_i + x_j \neq x_k+ x_l\}$, and
		\item[] $\Delta^{\overline{i}} = \{(x_1, x_2, x_3, x_4) \in \Delta \mid x_j \neq x_i+ x_k+ x_l, x_k \neq x_i+ x_j+ x_l, x_l \neq x_i+ x_j+ x_k\}.$
    \end{enumerate}

We now obtain the following result as a direct consequence of Theorem \ref{thm_kash}, and  Lemmas \ref{lem_phi} and \ref{cusp}. 

\begin{Corollary}\label{corol_comp}
Given a balanced triangulation $\cT$ of $N_{1, 3}$ we have: 
  \begin{enumerate}
    \item[] $\mathfrak{X}_{\cT}(N_{1, 3}) \cap \mathfrak{X}_{0}(N_{1, 3}) = \displaystyle \coprod_{i < j = 1}^4 \Delta^{i, j} \times \{ \epsilon_{i,j} \};$
    \item[] $\mathfrak{X}_{\cT}(N_{1, 3}) \cap \mathfrak{X}_{+1}(N_{1, 3}) = \displaystyle \coprod_{i = 1}^4 \Delta^{\overline{i}} \times \{ \epsilon_{i} \};$
    \item[] $\mathfrak{X}_{\cT}(N_{1, 3}) \cap \mathfrak{X}_{-1}(N_{1, 3}) = \displaystyle \coprod_{i = 1}^4 \Delta^{\overline{i}} \times \{ - \epsilon_{i} \}.$
  \end{enumerate}
\end{Corollary}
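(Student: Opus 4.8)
The plan is to obtain all three identities by determining, sign vector by sign vector, which open piece of the simplex $\Delta$ each component $\mathfrak{X}_{\cT}(\epsilon)$ occupies; the three lines of the statement will then differ only in how the admissible sign vectors $\epsilon$ are grouped by Euler class. First I would combine Theorem \ref{thm_kash} and Lemma \ref{lem_phi} to fix the coordinates. Theorem \ref{thm_kash} (with Theorem \ref{coordin}) gives the decomposition $\mathfrak{X}_{\cT}(N_{1,3}) = \bigsqcup_{\epsilon \in \{\pm 1\}^{T}} \mathfrak{X}_{\cT}(\epsilon)$, in which $\mathfrak{X}_{\cT}(\epsilon)$ is cut out inside $(\R_{>0})^E$, modulo the decoration action of $(\R_{>0})^V$, by the $\lambda$--lengths; passing to the quotient and invoking the diffeomorphism $\Phi^\ast$ of Lemma \ref{lem_phi} realizes each $\mathfrak{X}_{\cT}(\epsilon)$ as an open subset of $(\R_{>0})^4/\R_{>0} = \Delta$ through the projectivized triangle coordinates $[X_1, X_2, X_3, X_4]$. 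I would then use the grouping of sign vectors by Euler class recorded just before the statement---$e(\rho)=0$ exactly for $\epsilon=\epsilon_{i,j}$, $e(\rho)=+1$ exactly for $\epsilon=\epsilon_i$, and $e(\rho)=-1$ exactly for $\epsilon=-\epsilon_i$---which accounts for the ranges $i<j$ in the first line and $i$ in the second and third lines of the disjoint unions, and reduces the problem to identifying the open subset of $\Delta$ attached to each such $\epsilon$.

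The crux is to extract that open subset from Lemma \ref{cusp}. A decorated representation with coordinates $(X,\epsilon)$ is type-preserving precisely when the three peripheral holonomies of Lemma \ref{cusp} are nontrivial parabolics, i.e. when their three off-diagonal entries are all nonzero (a vanishing entry would force the corresponding matrix to be $\pm I$). These entries are linear in $(X_1,\dots,X_4)$, so the nonvanishing conditions are homogeneous and descend to $\Delta$. For $\epsilon=\epsilon_{i,j}$ all three entries reduce, up to sign, to $(x_i+x_j)-(x_k+x_l)$, so the three conditions collapse to the single inequality $x_i+x_j \neq x_k+x_l$, which is exactly $\Delta^{i,j}=\Delta^{k,l}$. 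For $\epsilon=\epsilon_i$ the three entries become, up to sign, $x_j-(x_i+x_k+x_l)$, $x_k-(x_i+x_j+x_l)$ and $x_l-(x_i+x_j+x_k)$, giving precisely the three inequalities defining $\Delta^{\overline{i}}$; and since $-\epsilon_i$ merely negates every entry, it cuts out the same region $\Delta^{\overline{i}}$. Assembling these identifications over all admissible $\epsilon$ produces the three disjoint unions in the statement.

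I expect the only delicate point to be the bookkeeping in this last step: checking that, for a fixed $\epsilon$, the three a priori independent nonvanishing conditions coming from the three punctures really do simplify as claimed---to one inequality when $e(\rho)=0$ and to three inequalities when $e(\rho)=\pm 1$. This is a direct substitution into the explicit matrices of Lemma \ref{cusp}, made transparent by the cyclic pattern in which the signs $\epsilon(t_1),\dots,\epsilon(t_4)$ are distributed among the peripheral words $[\gamma_1\gamma_2]$, $[\gamma_2\gamma_3]$ and $[\gamma_3\gamma_1]$.
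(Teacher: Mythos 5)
Your proposal is correct and takes essentially the same route as the paper: the paper presents this corollary as a direct consequence of Theorem \ref{thm_kash} and Lemmas \ref{lem_phi} and \ref{cusp}, which is exactly the combination you assemble, and your sign-vector-by-sign-vector substitution into Lemma \ref{cusp} (all three off-diagonal entries reduce to $\pm\left((x_i+x_j)-(x_k+x_l)\right)$ when $\epsilon=\epsilon_{i,j}$, while for $\epsilon=\pm\epsilon_i$ they yield precisely the three inequalities defining $\Delta^{\overline{i}}$, the fourth condition $x_i\neq x_j+x_k+x_l$ never arising) is precisely the computation the paper leaves implicit. Your bookkeeping is accurate, so nothing further is needed.
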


\subsection{Change of coordinates}\label{change}

In this section, following the notation from the previous section, we will state the formulas for the change of coordinates when we do a triangle switch. We need to distinguish the case of Euler class $e(\rho) = 0$ and of Euler class $e(\rho) = \pm 1.$ The formulas are straightforward computations. 

\subsubsection{Triangle switches in the case $e(\rho) = + 1$}\label{change_e1}

\begin{Lemma}[Change of coordinate when $e(\rho) = 1$]\label{ch_e1}
Suppose $e(\rho) = 1$ and let $ l \in \{ 1 , 2 , 3 , 4 \}.$ We denote by $\cT'$ be the triangulation obtained after a $S_l$-switch, and $(X', \epsilon')$  be the new coordinates. 
		
		The triangulation $\cT'$ is $\rho$-admissible if and only if  we have
		$$\forall \{i,j,k,l\} = \{1 , 2 , 3 , 4 \}, \quad -\epsilon (t_i) X_i + \epsilon (t_j) X_j + \epsilon (t_k) X_k \neq 0.$$
		 In addition we have: 
		$$\begin{dcases}
         X'_i = \displaystyle\frac{|-\epsilon (t_i) X_i + \epsilon (t_j) X_j + \epsilon (t_k) X_k |}{X_l} X_i, \, \mbox{ if } i\neq l\\
         X'_l = \displaystyle \frac{(-\epsilon (t_i) X_i + \epsilon (t_j) X_j + \epsilon  (t_k) X_k)(\epsilon (t_i) X_i - \epsilon (t_j) X_j + \epsilon (t_k) X_k)(\epsilon (t_i) X_i + \epsilon (t_j) X_j - \epsilon (t_k) X_k)}{X_l^2} .
    \end{dcases}$$
	Finally, the signs of the new triangles are given by
		\begin{enumerate}	
			\item If $\epsilon = \epsilon_{l} $, then 
			$\begin{cases} 
				\mbox{ if } \exists i \neq l , X_i > X_j + X_k, \mbox{ then } \epsilon' = \epsilon_i  \\
				\mbox{ otherwise } \epsilon' = \epsilon_l. 
			\end{cases}$
			\item If $\epsilon = \epsilon_k$ with $k \neq l$ then
				$\begin{cases} 
				\mbox{ if }\exists i\neq l,  X_i > X_j + X_k, \mbox{ then } \epsilon' = \epsilon_j  \\
				\mbox{ otherwise } \epsilon' = \epsilon_l.
			\end{cases}$
		\end{enumerate}	 
\end{Lemma}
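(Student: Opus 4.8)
The plan is to reduce the computation of the new triangle coordinates after an $S_l$-switch to the change-of-coordinates rules for the diagonal switches described in the earlier theorem, since the switch is realized by the explicit sequence of four edge flips listed in Section \ref{back_tri}. The key observation is that the triangle coordinates $X_i$ are products of three $\lambda$-lengths, so once I track how each individual $\lambda$-length transforms under the four successive flips, the transformation of each $X_i$ follows by multiplying the appropriate factors. Because the switch $S_l$ fixes the three edges of the triangle $t_l$ and modifies only the three ``outer'' edges, I expect that the $\lambda$-lengths of the edges of $t_l$ stay fixed while the three outer edges get replaced, and the new $X_i'$ for $i \neq l$ will involve one new outer edge-length.

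First I would set up the combinatorics explicitly in the coordinates of Figure \ref{tri_switch}. Taking $l = 4$ for concreteness (so we switch the triangle $t_4 = \{a,b,c\}$, fixing its complementary edges $\{d,e,f\}$), the four flips send $a \mapsto a^*$, then $b \mapsto c'$ and $c \mapsto b'$, then $a^* \mapsto a'$. Using the hypothesis $e(\rho)=1$, the signs $\epsilon(t_i)$ are all $+1$ except exactly one (if $\epsilon = \epsilon_l$ then only $t_l$ is negative; if $\epsilon = \epsilon_k$ with $k\neq l$ then only $t_k$ is negative), which pins down which case of the diagonal-switch theorem applies at each flip. Concretely, I would compute the intermediate $\lambda$-length $\lambda(a^*)$ from the first flip, then $\lambda(b'),\lambda(c')$ from the commuting pair, then $\lambda(a')$ from the last flip, each time reading off whether we are in case (1), (2a), or (2b) of the diagonal-switch theorem according to whether the relevant signs agree and which of the two edge-product comparisons holds.

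The heart of the argument is then bookkeeping: substituting $\lambda(a)\lambda(b)\lambda(c) = X_4$, $\lambda(b)\lambda(d)\lambda(f)=X_1$, etc., and simplifying the products of the new $\lambda$-lengths. The three linear combinations $-\epsilon(t_i)X_i + \epsilon(t_j)X_j + \epsilon(t_k)X_k$ appearing in the statement should emerge precisely as the numerators coming from the diagonal-switch formula $\lambda(e') = \frac{|\lambda(e_1)\lambda(e_3)-\lambda(e_2)\lambda(e_4)|}{\lambda(e)}$ when the two adjacent triangles have opposite signs, because each such numerator is a difference of two $\lambda$-length products each of which equals one of the $X_i$ up to the common edge-length. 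The formula for $X_l'$ as a product of all three linear combinations divided by $X_l^2$ should appear because $X_l'$ collects contributions from all three outer edges being replaced. The admissibility condition is then exactly the requirement that all three of these linear combinations be nonzero, which is where the diagonal-switch theorem's admissibility clause (case (2), $\lambda(e_1)\lambda(e_3)\neq\lambda(e_2)\lambda(e_4)$) feeds in.

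The sign rules in the conclusion come from determining which case (1), (2a), (2b) of the diagonal-switch theorem governs the final flip, and this is where I expect the main obstacle to lie: one must correctly determine the signs of the intermediate triangles created by the first three flips (which are not triangles of a balanced triangulation) and then apply case (2a) versus (2b) according to whether $X_i > X_j + X_k$ for some $i\neq l$, tracking carefully how the negative sign migrates among the four triangles. The dichotomy in the statement — $\epsilon' = \epsilon_i$ versus $\epsilon_l$ in case (1), and $\epsilon' = \epsilon_j$ versus $\epsilon_l$ in case (2) — should correspond exactly to whether some outer inequality $X_i > X_j + X_k$ forces a sign flip under case (2b) of the diagonal move. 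The bulk of the work is therefore a disciplined case analysis keyed to which $\epsilon$ we start from and which of the three linear combinations is negative; since the paper describes these as ``straightforward computations,'' I would organize the proof by first handling $\epsilon = \epsilon_l$ and then $\epsilon = \epsilon_k$ for $k\neq l$, and invoke the symmetry of the switch in the arcs $a,b,c$ to reduce the number of sub-cases that need explicit verification.
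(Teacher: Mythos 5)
Your plan coincides with the paper's own treatment: the paper offers no written proof of Lemma \ref{ch_e1}, stating in Section \ref{change} only that the formulas are straightforward computations, and the intended computation is exactly the one you outline --- decompose the $S_l$-switch into the four elementary flips of Section \ref{back_tri}, apply the diagonal-switch theorem at each flip to propagate $\lambda$-lengths, signs and admissibility (with $e(\rho)=1$ forcing exactly one negative triangle, which pins down the relevant case of the flip theorem at each step), and use the symmetry in the arcs $a$, $b$, $c$ to cut down the case analysis. Your identification of the quantities $-\epsilon(t_i)X_i+\epsilon(t_j)X_j+\epsilon(t_k)X_k$ as the rescaled flip numerators, and of the admissibility clause as the nonvanishing of these three combinations, is precisely the bookkeeping the paper has in mind.
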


\subsubsection{Triangle switches in the case $e(\rho) = 0$}\label{change_e0}

\begin{Lemma}[Change of coordinates when $e(\rho) = 0$]\label{ch_e0}
Suppose $e(\rho) = 0$ and let $\{ i,j,k,l \} = \{ 1 , 2 , 3 , 4 \}$ such that $\epsilon_i = \epsilon_j$ and $\epsilon_k = \epsilon_l.$ We denote by $\cT'$ be the triangulation obtained after a $S_l$-switch, and $(X', \epsilon')$  be the new coordinates.

	 The triangulation $\cT'$ is $\rho$-admissible if and only if $ X_k \neq  X_i +  X_j .$ In addition we have   
		$$\begin{dcases}
         X'_i = \displaystyle\frac{|X_i +  X_j -  X_k |}{X_l} X_i, \, \mbox{ if } i\neq l;\\
         X'_l = \displaystyle \frac{( X_i + X_j -  X_k)^3}{X_l^2}. 
    \end{dcases}$$
Finally, if $X_k > X_i + X_j$ then $\epsilon' = \epsilon$, and else $\epsilon' = - \epsilon.$ 
\end{Lemma}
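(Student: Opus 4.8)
The plan is to realise the $S_l$--switch as the explicit sequence of four elementary diagonal flips described in Section~\ref{back_tri} (flip $a$ into $a^*$, then the two commuting flips of $b$ and of $c$, then flip $a^*$ into $a'$; see Figure~\ref{tri_switch}), and to feed each flip into the diagonal switch theorem of Section~\ref{back_kas}, keeping track of both the $\lambda$--lengths and the triangle signs. The argument runs parallel to that of Lemma~\ref{ch_e1}; the only difference is that the sign hypothesis $\epsilon_i=\epsilon_j$, $\epsilon_k=\epsilon_l$ changes which flips fall under case~(1) and which under case~(2). Since the switch $S_4$ treats the arcs $a,b,c$ — and hence the triangles $t_3\ni a$, $t_1\ni b$, $t_2\ni c$, each glued to $t_4$ along one of these arcs — symmetrically, I would first reduce to $l=4$ and, by flipping first the edge shared by $t_4$ and its sign--partner $t_k$, relabel so that $k=3$. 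Thus $\epsilon(t_3)=\epsilon(t_4)$ while $\epsilon(t_1)=\epsilon(t_2)=-\epsilon(t_3)$.

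With this normalisation the first flip, of the edge $a$ shared by $t_3$ and $t_4$, is between two triangles of equal sign, so case~(1) applies: both new triangles keep the sign $\epsilon(t_4)$, and the Ptolemy relation reads $\lambda(a)\lambda(a^*)=\lambda(f)\lambda(b)+\lambda(c)\lambda(e)$, which in triangle coordinates becomes $\lambda(a^*)=(X_i+X_j)/(\lambda(a)\lambda(d))$ (here $\{i,j\}=\{1,2\}$). I would then flip $b$ and $c$. After the first flip each of these edges separates a triangle of one sign (namely $t_1$, resp.\ $t_2$) from one of the opposite sign (a triangle created by the first flip), so both fall under case~(2). This is exactly where the admissibility condition and the sign rule originate: the two competing $\lambda$--products in case~(2) simplify, using the value of $\lambda(a^*)$, to $(X_i+X_j)/\lambda(a)$ and $X_k/\lambda(a)$, so that $\cT'$ is admissible precisely when these differ, i.e.\ when $X_k\neq X_i+X_j$; moreover the dichotomy between subcases (2a) and (2b) is governed by the sign of $X_i+X_j-X_k$, which is what yields $\epsilon'=\epsilon$ when $X_k>X_i+X_j$ and $\epsilon'=-\epsilon$ otherwise. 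The final flip of $a^*$ into $a'$ is treated the same way and introduces no new condition.

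To conclude I would reassemble the triangle coordinates from the updated $\lambda$--lengths via $t_1'=\{b',d,f\}$, $t_2'=\{c',d,e\}$, $t_3'=\{a',e,f\}$ and $t_4'=\{a',b',c'\}$. Each of the three flipped edges $a',b',c'$ carries exactly one factor $|X_i+X_j-X_k|$ divided by the product of two of the unchanged lengths among $\lambda(a),\lambda(b),\lambda(c)$, so for $m\neq l$ the changed edge of $t_m'$ is scaled by the common factor $|X_i+X_j-X_k|/X_l$, giving $X_m'=\frac{|X_i+X_j-X_k|}{X_l}X_m$; while $X_l'=\lambda(a')\lambda(b')\lambda(c')$ accumulates all three factors into $(X_i+X_j-X_k)^3/X_l^2$, the cube reflecting the three flipped edges. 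A final sanity check with Lemma~\ref{cusp} that the three cusp holonomies stay parabolic confirms that the formulas are consistent.

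I expect the main obstacle to be the sign and absolute--value bookkeeping through the intermediate, non--balanced triangulations. After the first flip the edge $a^*$ is a loop based at the puncture shared by $t_3$ and $t_4$, so the quadrilaterals hosting the two middle flips look degenerate and one must identify with care the signs of the transient triangles produced at each step. The delicate point is to verify that the two case--(2) flips and the flip of $a^*$ interact so that all four independent absolute--value choices collapse into the single quantity $|X_i+X_j-X_k|$ and the single admissibility inequality $X_k\neq X_i+X_j$, and that the subcase choices do not produce conflicting sign changes; once this is checked, the remaining computation is the routine Ptolemy algebra that the authors describe as ``straightforward.''
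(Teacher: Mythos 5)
Your proof is correct and is essentially the paper's own argument: the paper offers no proof of Lemma \ref{ch_e0} beyond the remark that ``the formulas are straightforward computations'', and the intended computation is exactly your decomposition of the $S_l$--switch into the four elementary flips of Section \ref{back_tri} processed through the diagonal-switch theorem, with the first and last flips falling under case (1) (equal signs, hence unconditional) and the two middle flips under case (2), whose competing products $(X_i+X_j)/\lambda(a)$ and $X_k/\lambda(a)$ produce both the admissibility condition $X_k\neq X_i+X_j$ and the sign dichotomy. Your reassembly $\lambda(a')=|X_i+X_j-X_k|/(\lambda(b)\lambda(c))$, $\lambda(b')=|X_i+X_j-X_k|/(\lambda(a)\lambda(c))$, $\lambda(c')=|X_i+X_j-X_k|/(\lambda(a)\lambda(b))$ reproduces the stated formulas, including the paper's own slight abuse in writing $(X_i+X_j-X_k)^3$ where $|X_i+X_j-X_k|^3$ is meant in the expression for $X_l'$.
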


Note that in projective coordinates we get:
$$[X_i', X_j', X_k',X_l'] = \left[ X_i, X_j, X_k, \frac{(X_i+X_j-X_k)^2}{X_l} \right]. $$

\subsection{Proof of Theorem \ref{conncomp} }

In order to describe the connected components of $\mathfrak{X}_{0}(N_{1, 3})$ and $\mathfrak{X}_{\pm 1}(N_{1, 3})$, we need to study the sign of the quantities appearing in the matrices of Lemma \ref{cusp}.  To simplify notations, we let $s_i^{\pm}, s^{\pm} \in \{\pm 1\}^3$ be defined by 
    \begin{itemize}
	\item $s_i^+(v_i) =  +1$, and $s_i^+ (v_j) = -1$ if $j\neq i;$
	\item $s^+(v_i)  = +1 $ for all $i = 1, \ldots, 3;$
	\item $s_i^- = - s_i^+$, and $s^- = - s^+.$
    \end{itemize}

By doing a counting of all the possibilities, we will prove the following result, which is a more precise version of Theorem \ref{conncomp}.

\begin{Theorem}\label{comp} We have the following decompositions:
  
\begin{enumerate}
    \item $\displaystyle \mathfrak{X}_0(N_{1, 3}) = \coprod_{i = 1}^{3} \mathfrak{X}_0^{s_i^{+}}(N_{1, 3}) \coprod_{i = 1}^{3} \mathfrak{X}_0^{s_i^{-}}(N_{1, 3}).$
    \item $\displaystyle \mathfrak{X}_{+1}(N_{1, 3}) = \mathfrak{X}_{+1}^{s^+}(N_{1, 3}) \coprod_{i = 1}^{3} \mathfrak{X}_{+ 1}^{s_i^-}(N_{1, 3}).$
    \item $\displaystyle \mathfrak{X}_{- 1}(N_{1, 3}) = \mathfrak{X}_{- 1}^{s^-}(N_{1, 3}) \coprod_{i = 1}^{3} \mathfrak{X}_{- 1}^{s_i^+}(N_{1, 3}).$
    \item The spaces $\mathfrak{X}_0^{s_i^{+}}(N_{1, 3})$, $\mathfrak{X}_0^{s_i^{-}}(N_{1, 3})$, $\mathfrak{X}_{+ 1}^{s_i^{-}}(N_{1, 3})$, $\mathfrak{X}_{+ 1}^{s^{+}}(N_{1, 3})$, $\mathfrak{X}_{- 1}^{s_i^{+}}(N_{1, 3})$  and $\mathfrak{X}_{- 1}^{s^{-}}(N_{1, 3})$ are connected.
  \end{enumerate}
\end{Theorem}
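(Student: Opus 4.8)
The plan is to treat parts (1)--(3) (which sign strata are nonempty) and part (4) (connectivity) separately, in both cases working in the triangle coordinates of Corollary~\ref{corol_comp} and reading the cusp signs off Lemma~\ref{cusp}. First I would record a structural observation that makes the decompositions in (1)--(3) essentially tautological: for a type-preserving $\rho$ the sign $s(v_m)$ is the sign of the (nonzero) off-diagonal entry of the peripheral parabolic at $v_m$, which by Lemma~\ref{cusp} is the sign of one of the three affine-linear expressions $A,B,C$ appearing in the cusp matrices. Since ``parabolic and $\neq \pm I$'' is an open condition and the sign of a nowhere-vanishing continuous function is locally constant, each $\mathfrak{X}^s_e(N_{1,3})$ is open and closed in $\mathfrak{X}_e(N_{1,3})$. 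Hence $\mathfrak{X}_e(N_{1,3}) = \coprod_s \mathfrak{X}^s_e(N_{1,3})$ holds for free and every nonempty stratum is a union of connected components; the real content is (a) identifying the nonempty strata and (b) showing each is connected.

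For (a) I would normalize the projective simplex by $x_1+x_2+x_3+x_4=1$, so that $A,B,C$ become affine-linear on $\Delta$. For $e=\pm1$ one has $\epsilon=\pm\epsilon_l$, and a direct substitution gives, after normalization, expressions of the form $1-2x_m$ (for a permutation of the indices $m$); since $x_i>0$ and $\sum x_i=1$, at most one of $A,B,C$ can be negative, so the only sign vectors realized are $s^+$ (all positive) and the three with a single negative entry, i.e.\ exactly those with two or three $+1$'s, and dually for $e=-1$. For $e=0$ one has $\epsilon=\epsilon_{i,j}$, and the same substitution shows that $A,B,C$ coincide up to sign in a pattern determined by the pair $\{i,j\}$ (say $A=-B=-C$); a single wall then splits $\Delta^{i,j}$ into the two chambers realizing $s^+_m$ and $s^-_m$ for the index $m$ attached to $\{i,j\}$, with complementary pairs giving the same $m$. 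Running over all admissible $\epsilon$ in Corollary~\ref{corol_comp} and collecting the sign vectors yields exactly the lists in (1)--(3), pinning down the nonemptiness conditions of Theorem~\ref{conncomp}.

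For part (4), I would first describe a single chart. Fixing $e$, $s$ and a balanced triangulation $\cT$, the set $\mathfrak{X}^s_e \cap \mathfrak{X}_\cT$ is, by the analysis above, a disjoint union over the admissible $\epsilon$ and sign-chambers of regions cut out of $\Delta$ by sign conditions on $A$, $B$, $C$; each region is an intersection of the simplex with half-spaces, hence convex and connected. The crucial point is to distinguish two kinds of boundary walls: the \emph{holonomy walls} $\{A=0\}$, $\{B=0\}$, $\{C=0\}$, on which some peripheral holonomy degenerates to $\pm I$ and which therefore lie genuinely outside $\mathfrak{X}(N_{1,3})$; and the \emph{degeneration walls} on $\partial\Delta$, where some triangle coordinate $X_i\to 0$, along which $\cT$ ceases to be admissible while $\rho$ stays type-preserving. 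Thus a single chart may already be disconnected (e.g.\ $\mathfrak{X}^{s^+}_{+1}\cap\mathfrak{X}_\cT$ has one convex piece for each of the four $\epsilon_l$), and distinct pieces can only be joined across a degeneration wall.

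Finally I would glue across charts using triangle switches. Across a degeneration wall one passes to the triangulation $\cT'$ obtained by the corresponding switch; the change-of-coordinate Lemmas~\ref{ch_e1} (for $e=\pm1$) and~\ref{ch_e0} (for $e=0$) show that the region continues into a region of $\mathfrak{X}^s_e\cap\mathfrak{X}_{\cT'}$, with $s$ automatically preserved (being a conjugacy invariant) and $\epsilon'$ determined by the explicit sign rules stated there. This exhibits $\mathfrak{X}^s_e$ as a union of convex pieces glued along degeneration walls, and reduces connectivity to that of the dual graph whose nodes are the pieces and whose edges are shared degeneration walls. To prove that graph connected I would combine connectivity of the balanced triangulation graph $\mathfrak{T}\cong\Upsilon$ (Proposition~\ref{equiv_tri}) with the sign rules of Lemmas~\ref{ch_e1}/\ref{ch_e0}, checking that suitable switches move between the several $\epsilon$-pieces carrying the same $s$ inside each chart. \emph{The hard part will be exactly this last bookkeeping}: showing that the switches genuinely connect the a priori disjoint pieces (for instance the four $\epsilon_l$-pieces comprising $\mathfrak{X}^{s^+}_{+1}$) rather than merely permuting triangulations, which amounts to a finite but delicate case analysis tracking how the chamber and the triangle-sign vector $\epsilon$ evolve along a switch while $s$ remains fixed.
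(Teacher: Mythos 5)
Your treatment of parts (1)--(3) is correct and essentially identical to the paper's: restrict to the open dense chart of a balanced triangulation, read the three cusp entries of Lemma~\ref{cusp} for each admissible sign vector $\epsilon$ of Corollary~\ref{corol_comp}, and enumerate the sign chambers. Your normalized expressions $1-2x_m$ for $\epsilon=\pm\epsilon_l$, and the single wall $x_i+x_j=x_k+x_l$ with $A=-B=-C$ for $\epsilon=\epsilon_{i,j}$, reproduce exactly the paper's $\Delta^{i,+}$/generalized-triangle-inequality and $\Delta^{i,j,\pm}$ bookkeeping, and your clopen observation is a clean way to phrase what the paper leaves implicit.

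Part (4) is where there is a genuine gap, in two respects. First, the structural claim on which your gluing scheme rests is incorrect: the chambers of a fixed stratum inside one chart are not joined across ``degeneration walls'' at $\partial\Delta$ where some $X_i\to 0$. The locus where an adjacent triangulation $\cT'$ fails to be $\rho$-admissible is an \emph{interior} affine wall of the chart (the condition $X_k=X_i+X_j$ of Lemma~\ref{ch_e0}, resp.\ the vanishing conditions of Lemma~\ref{ch_e1}), not the simplex boundary; and, more to the point, no wall-crossing or limiting argument is needed at all. The mechanism that actually connects the pieces is chart overlap: in the paper's proof of connectivity of $\mathfrak{X}_0^{s_1^+}(N_{1,3})$ one applies a \emph{single} triangle switch $S_4$ and computes, via Lemma~\ref{ch_e0}, that both chambers $\Delta^{1,2,-}\times\{\epsilon_{1,2}\}$ and $\Delta^{3,4,-}\times\{\epsilon_{3,4}\}$ of $\mathfrak{X}_0^{s_1^+}\cap\mathfrak{X}_{\cT}$ are carried into the one connected chamber $\Delta^{1,2,-}\times\{\epsilon_{1,2}\}$ of the chart of $\cT'$: for a point $q\in\Delta^{3,4,-}$ the required inequality $q_1+q_2<q_3+(q_1+q_2-q_3)^2/q_4$ is equivalent to $q_3+q_4<q_1+q_2$, i.e.\ precisely to membership in $\Delta^{3,4,-}$ (for $p\in\Delta^{1,2,-}$ one first moves $p$ within its connected chamber so that $p_3>p_1+p_2$). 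A path between the two images inside that single $\cT'$-chamber is already a path in $\mathfrak{X}_0^{s_1^+}(N_{1,3})$, and density of the chart finishes the argument; the cases $\mathfrak{X}_{+1}^{s_1^-}$ and $\mathfrak{X}_{+1}^{s^+}$ are handled by the same one-switch computation using Lemma~\ref{ch_e1}.

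Second, even granting your framework, you defer exactly the content of the theorem: the connectivity of your dual graph of pieces over the infinite tree $\Upsilon$ is the whole of part (4), and you give no argument for it beyond announcing a ``finite but delicate case analysis.'' The paper shows that no such global bookkeeping is needed --- one explicit switch and one inequality suffice, with no induction over triangulations. A smaller inaccuracy in the same part: for $e=\pm1$ the piece carrying $s^{+}$ in a chart is not convex, being the union of $\Delta^{i,+}$ and the generalized-triangle-inequality region glued along the unremoved wall $x_i=x_j+x_k+x_l$ inside $\Delta^{\overline{i}}$; it is still connected, which is all that is used, but the ``intersection of half-spaces, hence convex'' description should be corrected.
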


\begin{proof}
Let $\cT$ be a balanced triangulation of $N_{1,3}$ with set of vertices $V$, and let $s \in \{\pm 1\}^{V}$ and $k \in \{-1, 0, +1\}.$ Since $\mathfrak{X}_{\cT}(N_{1, 3})$ is open and dense in $\mathfrak{X}(N_{1, 3})$, then $\mathfrak{X}_{k}^s(N_{1, 3})\neq \emptyset$ if and only if $\mathfrak{X}_{k}^s(N_{1, 3}) \cap \mathfrak{X}_{\cT}(N_{1, 3}) \neq \emptyset.$ 
\begin{itemize}
  \item For $(1)$, we note that all representations in $\mathfrak{X}_{k} (N_{1, 3}) \cap \mathfrak{X}_{\cT}(N_{1, 3})$ are such that $\epsilon = \epsilon_{i,j}$ for a certain $\{ i , j \} \in \{ 1,2,3,4 \}.$ In that case, there exists $p \in \{1 ,2 , 3 \}$ such that $s = s_p^\pm.$ If we define $$\Delta^{i,j,+} = \Delta^{k, l, -}= \{(x_1, x_2, x_3, x_4) \in \Delta \mid x_i + x_j > x_k+ x_l\},$$ then we get for example that $ (\Delta^{1,2,-} \times \{ \epsilon_{1,2} \}) \cup (\Delta^{3,4,-} \times \{ \epsilon_{3,4} \} ) $ is diffeomorphic to an open dense subset of $\mathfrak{X}_0^{s_1^+} (N_{1,3} ) .$ By symmetry we obtain that 
  $$\displaystyle \mathfrak{X}_0(N_{1, 3}) = \coprod_{i = 1}^{3} \mathfrak{X}_0^{s_i^{+}}(N_{1, 3}) \coprod_{i = 1}^{3} \mathfrak{X}_0^{s_i^{-}}(N_{1, 3}).$$
  \item We use a similar reasoning for $(2).$  All representations in $\mathfrak{X}_{+1} (N_{1, 3}) \cap \mathfrak{X}_{\cT}(N_{1, 3})$ are such that $\epsilon = \epsilon_{i}$ for a certain $ i  \in \{ 1,2,3,4 \}.$ We let $\Delta^{i, +} = \{(x_1, x_2, x_3, x_4) \in \Delta \mid x_i > x_j + x_k+ x_l\}$, and we say that $(x_1 , x_2 , x_3 , x_4)$ satify the \emph{Generalized Triangle Inequality} (GTI) if $(x_1 , x_2 , x_3 , x_4)$ does not belong to any $\Delta^{i,+}.$ In other words, 
  	\begin{equation}\tag{GTI}\label{GTI}  (x_1 , x_2 , x_3 , x_4) \mbox{ satisfies (GTI) } \Leftrightarrow \forall i,j,k,l \in \{1 , 2, 3, 4 \}, \,  x_i \leqslant x_j + x_k + x_l. \end{equation}
Now, for a representation $\rho$ whose coordinates are in $\Delta^{i} \times \{ \epsilon_i \}$ we get the following possibilities~:
  	\begin{itemize}
  		\item $(X_1 , X_2 , X_3  X_4) \in \Delta^{j,+}$ for some $j \neq i$, if and only if $s = s_p^-$ for a certain $p \in \{ 1,2,3 \}.$
  		\item $(X_1 , X_2 , X_3 , X_4) \in \Delta^{i,+}$ or $(X_1 , X_2 , X_3, X_4)$ satisfies \eqref{GTI}, if and only if $s = s^+.$
  	\end{itemize}
Hence we get that $\displaystyle \mathfrak{X}_{+1}(N_{1, 3}) = \mathfrak{X}_{+1}^{s^+}(N_{1, 3}) \coprod_{i = 1}^{3} \mathfrak{X}_{+ 1}^{s_i^-}(N_{1, 3}).$ Part $(3)$ of the theorem is done in the same way as part $(2).$
\item In order to prove $(4)$, we need to prove that all the sets $\mathfrak{X}_{k}^{s}(N_{1, 3})$ appearing in $(1), (2)$ and $(3)$ are connected. By symmetry it is sufficient to prove that $\mathfrak{X}_{0}^{s_1^+}(N_{1, 3})$, $\mathfrak{X}_{+1}^{s_1^-}(N_{1, 3})$ and $\mathfrak{X}_{+1}^{s^+}(N_{1, 3})$ are connected, and we will focus on $\mathfrak{X}_{0}^{s_1^+}(N_{1, 3}).$ 

By Theorem \ref{thm_kash}, Lemma \ref{cusp} and Corollary \ref{corol_comp}, we have that $\Delta^{1,2, -}_{\cT, \epsilon_{1, 2}}\cup \Delta^{3,4, -}_{\cT, \epsilon_{3, 4}}$ is diffeomorphic to a dense subset of $\mathfrak{X}_{0}^{s_1^+}(N_{1, 3}).$ Both $\Delta^{1,2, -} \times \{\epsilon_{1, 2}\}$ and $\Delta^{3,4, -} \times \{ \epsilon_{3, 4} \}$ are connected, so to prove that $\mathfrak{X}_{0}^{s_1^+}(N_{1, 3})$ is connected,  it suffices to choose two points $(p, \epsilon_{1,2})$ with $p= (p_1, p_2 , p_3 , p_4)  \in \Delta^{1,2, -}$ and $(q , \epsilon_{3,4}) $ with $q = (q_1 , q_2 , q_3 , q_4) \in \Delta^{3,4, -}$ and show that they are connected by a path in $\mathfrak{X}_{0}^{s_1^+}(N_{1, 3}).$ Without loss of generality, we can assume that  $q_3 > q_1+q_2.$ By Section \ref{change_e0}, after doing a triangular switch $S_4$, the new coordinates (for the new triangulation $\cT '$) are given by the points 
$$p' = \left[ p_1,p_2,p_3, \displaystyle\frac{(-p_1-p_2+p_3)^2}{p_4} \right] \in \Delta^{1,2, -} \mbox{ and } \epsilon_p' =  \epsilon_{1, 2}$$
 and 
$$q' = \left[q_1, q_2, q_3, \displaystyle\frac{(q_1+q_2-q_3)^2}{q_4} \right] \in \Delta^{1,2, -} \mbox{ and } \epsilon_q' =  \{\epsilon_{1, 2}\}.$$ Since $\Delta^{1,2, -}_{\cT', \epsilon_{1, 2}}$ is connected, there is a path in $\mathfrak{X}_{\cT'} (N_{1,3} ) \cap \mathfrak{X}_{0}^{s_1^+}(N_{1, 3})$ between $p'$ and $q'.$ The image of this path by $S_4$ gives a path in $\mathfrak{X}_{0}^{s_1^+}(N_{1, 3})$ between $p$ and $q.$ So $\mathfrak{X}_{0}^{s_1^+}(N_{1, 3})$ is connected. Since the connectedness of $\mathfrak{X}_{+1}^{s_1^-}(N_{1, 3})$ and $\mathfrak{X}_{+1}^{s^+}(N_{1, 3})$ is proven in a very similar way, we will omit a detailed discussion.
\end{itemize}
\end{proof}

\section{Euler class $\mathrm{e}(\rho) = \pm 1$}\label{eul_1}

In this section we will discuss the case of $e(\rho) = \pm 1$ and prove Theorem \ref{eul_pm1} (1), (2) and (3).

We start with the following useful Proposition.

\begin{Proposition}\label{uncountable}
 The set of representations $[\rho] \in \mathfrak{X}_{\pm 1}(N_{1,3})$ such that all the balanced triangulations of $N_{1,3}$ are $\rho$--admissibile is of full-measure in $\mathfrak{X}_{\pm 1}(N_{1,3}).$
\end{Proposition}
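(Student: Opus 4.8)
The plan is to show that the complement of the admissible locus, for each individual balanced triangulation, is a measure-zero subset of $\mathfrak{X}_{\pm1}(N_{1,3})$, and that the set of balanced triangulations is countable, so a countable union of null sets is null. First I would fix a reference balanced triangulation $\cT$. By Theorem \ref{thm_kash} and Corollary \ref{corol_comp}, the set $\mathfrak{X}_{\cT}(N_{1,3})\cap\mathfrak{X}_{\pm1}(N_{1,3})$ is parametrized by $\coprod_i \Delta^{\overline{i}}\times\{\pm\epsilon_i\}$, and its complement inside $\mathfrak{X}_{\pm1}(N_{1,3})$ is exactly where $\cT$ fails to be $\rho$-admissible. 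By the admissibility criterion in Lemma \ref{ch_e1} (equivalently, by the description of the diagonal/triangle switch moves), non-admissibility of a neighbouring triangulation is detected by vanishing of a finite collection of the real-analytic functions $-\epsilon(t_i)X_i+\epsilon(t_j)X_j+\epsilon(t_k)X_k$ on the simplex $\Delta$. Each such function is a nonzero affine (hence real-analytic, non-identically-zero) function of the triangle coordinates, so its zero set is a codimension-one subset of $\Delta$, and therefore has measure zero in each component $\Delta^{\overline{i}}$.

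Next I would assemble these local statements into the global one. For a \emph{single} triangulation $\cT$, the locus where $\cT$ is non-admissible is open and dense's complement, but more importantly it is contained in a finite union of zero sets of nonvanishing real-analytic functions, hence null. The key structural input is then Proposition \ref{equiv_tri}: the balanced triangulation graph $\mathfrak{T}$ is isomorphic to $\Upsilon$, the one-skeleton of an explicit locally finite simplicial complex whose vertices are quadruples of $1$-sided simple closed curves pairwise intersecting once. Since $\Upsilon$ has countably many vertices, there are only countably many balanced triangulations of $N_{1,3}$. Label them $\cT_1,\cT_2,\ldots$ and let $B_m\subset\mathfrak{X}_{\pm1}(N_{1,3})$ be the set of $[\rho]$ for which $\cT_m$ is \emph{not} $\rho$-admissible. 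By the previous paragraph each $B_m$ is a null set, so $B=\bigcup_{m\geq1}B_m$ is null. The complement $\mathfrak{X}_{\pm1}(N_{1,3})\setminus B$ is precisely the set of representations for which \emph{every} balanced triangulation is admissible, and it has full measure.

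A point requiring care is the choice of a measure class on $\mathfrak{X}_{\pm1}(N_{1,3})$ for which ``full measure'' and ``null'' are well-defined and for which the diffeomorphisms $\Phi^\ast$ of Lemma \ref{lem_phi} are measurable, so that nullity computed in the chart $\Delta$ transports back to $\mathfrak{X}_{\pm1}$. I would use the natural smooth (Lebesgue) measure class coming from the real-analytic manifold structure on each component $\Delta^{\overline{i}}$; since each $\mathfrak{X}_{\cT}(N_{1,3})$ is open and dense and the transition maps between overlapping charts are the real-analytic triangle-switch formulas of Section \ref{change}, this measure class is globally well-defined and the switch maps preserve null sets. One must also confirm that the finitely many functions cutting out $B_m$ in the chart associated to $\cT_m$ really are not identically zero on any component; this follows because for any fixed admissible $\cT_m$ one can exhibit an explicit $[\rho]$ (e.g.\ a Fuchsian point, or any point with all triangle coordinates generic) at which the relevant affine expressions are nonzero.

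The main obstacle I anticipate is not the measure-zero estimate for an individual triangulation, which is essentially the affine-function argument above, but rather packaging the \emph{countability} cleanly and verifying that nullity is preserved under the change-of-coordinate maps across the whole graph $\mathfrak{T}$. The countability is handed to us by Proposition \ref{equiv_tri}, but one must make sure that the identification $\mathfrak{T}\cong\Upsilon$ indeed gives a countable vertex set (it does, since simple closed curves on a fixed surface form a countable set), and that each $B_m$, although defined via a different chart, is genuinely a subset of the \emph{same} space $\mathfrak{X}_{\pm1}(N_{1,3})$ measured in a consistent way. Once this bookkeeping is in place, the conclusion is immediate from countable subadditivity of the measure.
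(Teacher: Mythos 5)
Your global skeleton coincides with the paper's: countably many balanced triangulations (via the tree $\Upsilon$ and Proposition \ref{equiv_tri}), a null failure locus for each, and countable subadditivity. But the mechanism you give for nullity of an individual $B_m$ has a genuine flaw. You propose to cut out $B_m$ by finitely many functions ``in the chart associated to $\cT_m$''; that chart is, by definition, $\mathfrak{X}_{\cT_m}(N_{1,3})$, the locus where $\cT_m$ \emph{is} $\rho$--admissible, so $B_m$ is exactly the complement of the chart's domain and is invisible to its coordinates. Relatedly, the affine conditions $-\epsilon(t_i)X_i+\epsilon(t_j)X_j+\epsilon(t_k)X_k\neq 0$ of Lemma \ref{ch_e1} only detect non-admissibility of a triangulation \emph{adjacent} to one already known to be admissible; they do not directly describe $B_m$ for a triangulation at distance $n$ from a base triangulation. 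The paper's proof resolves exactly this point: it fixes one base chart $\cT$, uses the uniqueness of the path $\cT=\cT_0,\cT_1,\dots,\cT_n=\cT'$ in the tree, and argues by induction on $i$ that $\cT_{i+1}$ is admissible if and only if certain expressions in the coordinates of $\cT_i$ are nonzero, so that non-admissibility of any $\cT'$ is pulled back to the vanishing of finitely many \emph{Laurent polynomials} (not affine functions --- and only piecewise so, because of the sign branches and absolute values in the switch formulas of Section \ref{change_e1}) in the single set of base coordinates $X^{(0)}_1,\dots,X^{(0)}_4$. Each zero set $\mathcal{Z}_{\cT'}$ is then a proper Zariski-closed subset of $(\R_{>0})^4$, hence Lebesgue-null, and the countable union over the tree is null \emph{in one chart}, which also dissolves your bookkeeping concern about transporting null sets across infinitely many charts.

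A second omission: having produced the full-measure set $\mathcal{C}\subset(\R_{>0})^4$, the paper verifies the converse realization step --- every $(x_1,x_2,x_3,x_4)\in\mathcal{C}$, together with a sign vector $\epsilon$ with $\sum_{i}\epsilon(t_i)=\pm 2$, actually arises from a type-preserving representation of Euler class $\pm 1$, constructed through the explicit section of the map $\Phi$ of Lemma \ref{lem_phi} (namely $\lambda(e_{i,j})=x_ix_j/(x_1x_2x_3x_4)^{1/3}$) and the cusp computation of Lemma \ref{cusp}. Without this step, nullity computed in coordinates only says something about the part of $\mathfrak{X}_{\pm 1}(N_{1,3})$ covered by the base chart, and does not by itself yield ``full measure in $\mathfrak{X}_{\pm 1}(N_{1,3})$''; your discussion of the measure class gestures at the issue but does not supply the realization. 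Both gaps are fixable, and the fix is precisely the paper's single-chart, path-induction argument.
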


\begin{proof}
  The proof is very close to the one of Proposition 5.5 of \cite{yan_ont}. We will discuss the case $[\rho] \in \mathfrak{X}_{+ 1}(N_{1,3})$, but the other case is very similar. Suppose $\rho$ is a type-preserving representation of $\pi_1(N_{1,3})$ with $e(\rho) = 1$ and $d$ is a decoration of $\rho.$ Let $\cT$ be a $\rho$--admissible balanced triangulation of $N_{1,3}$ with set of edges $E$ and set of triangles $T$, and let $(\lambda, \epsilon) \in \R_{>0}^E \times \{\pm 1\}^T$ be the coordinates of $[(\rho, d)]\in \mathfrak{X}^d_{1}(N_{1,3})$, that is, the $\lambda$--lengths of the edges and the signs of the triangles. Recall from Section \ref{back_tri} that given any balanced triangulation $\cT'$ there is a unique path in $\Upsilon$ connecting $\cT$ to $\cT'$, which corresponds to a sequence of triangle switches $\{S_i\}_{i =1}^n$ such that $\cT_0 = \cT$, $\cT_n = \cT'$ and $\cT_i$ is obtained from $\cT_{i-1}$ by doing the triangle switch $S_i.$ If $\cT_i$ is $\rho$--admissible and has $\lambda$--lengths $\lambda_i\in\R_{>0}^E$, then $\cT_{i+1}$ is $\rho$--admissible if and only if the Laurent polynomials describing the quantities $X^{(i+1)}_1$, $X^{(i+1)}_2$, $X^{(i+1)}_3$ and $X^{(i+1)}_4$ in terms of the quantities $X^{(i)}_1$, $X^{(i)}_2$, $X^{(i)}_3$ and $X^{(i)}_4$ (see Corollary \ref{change_e1}) are all non-zero. Using induction on $i$, we can see that $\cT'= \cT_n$ is $\rho$--admissible if and only if the quantities $X^{(n)}_1$, $X^{(n)}_2$, $X^{(n)}_3$ and $X^{(n)}_4$ are all non-zero, but these quantities are Laurent polynomials in terms of the variables $X^{(0)}_1$, $X^{(0)}_2$, $X^{(0)}_3$ and $X^{(0)}_4$, see formulas in Section \ref{change_e1}. The set of zeros $\mathcal{Z}_{\cT'}$ of these Laurent polynomials has zero Lebesgue measure because it is a Zariski-closed proper subset of $(\R_{>0})^4.$ Now since $\Upsilon$ is a countably infinite tree, there are countably many balanced triangulations $\cT'$ of $N_{1,3}.$ So the set $\cup_{\cT'}\mathcal{Z}_{\cT'}$ has zero Lebesgue measure as well. Hence the set $\mathcal{C} = (\R_{>0})^4\setminus \{\cup_{\cT}\mathcal{Z}_{\cT}\}$ has full measure. 
  
  Now, every $(x_1, x_2, x_3, x_4) \in \mathcal{C}$ gives a type-preserving representation in the following way. Consider a balanced triangulation $\cT$ of $N_{1,3}$ with set of edges $E$ and set of triangles $T$ and let $\epsilon \in \{\pm 1\}^T$ be a choice of signs for each triangles such that $\sum_{i= 1}^4\epsilon(t_i) = 2.$ Using the map $\Phi$ of Lemma \ref{lem_phi}, we get the length $\lambda \in (\R_{>0})^E$ using the formula:
 $$\lambda(e_{i,j}) = \displaystyle\frac{x_i x_j}{(x_1 x_2 x_3 x_4)^{\frac{1}{3}}}.$$
Therefore, $(\lambda, \epsilon)$ determines a decorated representation $(\rho, d)$ up to conjugation. By the calculation in Theorem \ref{cusp} one can see that this representation is type-preserving, and by (\ref{euler_sign}), $e(\rho) = 1.$ Since $(x_1, x_2, x_3, x_4) \in \mathcal{C},$ all the Laurent polynomials are non zero for all the balanced triangulations, and hence all balanced triangulations are $\rho$--admissible.
\end{proof}

In order to prove Theorem \ref{eul_pm1} (1), (2) and (3), we need to determine if the images of $2$-sided simple closed curves are sent to hyperbolic elements. Let $\rho \in \mathfrak{X}_{1} (N_{1,3} )$ and let $\cT$ be a balanced triangulation with triangles $\{t_1, t_2, t_3, t_4\}.$ We denote by $\gamma_{i,j}$ the $2$-sided curve associated to the edge $e_{i,j}$, which is the edge adjacent to triangles $t_i$ and $t_j.$ In what follows, we need to distinguish the curves corresponding to adjacent triangles with opposite signs, and the ones corresponding to adjacent triangles with the same sign. The following result, is a direct computation using Theorem \ref{SunYang}.

\begin{Lemma}[Trace of main curves]\label{curves_e1}
	Let $\rho$ and $\cT$ be as above, and let $\{i, j, k, l\} = \{1, 2, 3, 4\}.$ 
  \begin{enumerate}
    \item If $\epsilon(t_i)\neq \epsilon(t_j)$, then $|\mathrm{tr} \rho (\gamma_{i,j})| = \frac{|X_k^2 + X_l^2 - (X_i + X_j)^2|}{X_k X_l}.$
    \item If $\epsilon(t_i) = \epsilon(t_j)$, then $|\mathrm{tr} \rho (\gamma_{i,j})| = \frac{|X_k^2 + X_l^2 - (X_i - X_j)^2|}{X_k X_l}.$
  \end{enumerate}
  
Moreover, the right-hand sides of the six equations above are strictly greater than $2$ if and only if $(X_1, X_2, X_3, X_4)$ do not satisfy the generalized triangle inequalities \eqref{GTI}. 
\end{Lemma}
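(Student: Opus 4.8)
The plan is to establish the two trace formulas by direct application of the Sun--Yang trace formula (Theorem \ref{SunYang}), and then to analyze the resulting rational expressions to prove the ``greater than $2$'' characterization.

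\textbf{Setting up the trace computation.} First I would fix the balanced triangulation $\cT=\{t_1,t_2,t_3,t_4\}$ of Figure \ref{fig:triang} and identify, for the curve $\gamma_{i,j}$ associated to the edge $e_{i,j}$ shared by $t_i$ and $t_j$, exactly which edges and triangles it crosses when put in standard position. Since $\gamma_{i,j}$ goes around the two punctures joined by $e_{i,j}$ and is disjoint from $e_{i,j}$ (by Proposition \ref{rem1}), it should cross the four other edges in a controlled way, passing alternately through the triangles. I would then write down the matrices $M(t_i)$ and $M(t_j)$ (and the relevant left/right turn matrices) from Section \ref{back_trace}, with the appropriate $\lambda$--lengths and the sign factors $\epsilon(t_i)$, $\epsilon(t_j)$ inserted in the off-diagonal entries. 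Multiplying these matrices in the cyclic order dictated by $\gamma_{i,j}$ and dividing by the product of the $\lambda$--lengths of the crossed edges, per Formula \eqref{trace_formula}, yields $|\mathrm{tr}\,\rho(\gamma_{i,j})|$. Finally I would rewrite the answer in terms of the triangle coordinates $X_1,\dots,X_4$ using $X_m=\lambda(e_1)\lambda(e_2)\lambda(e_3)$ for the edges of $t_m$; the key algebraic simplification is that the $\lambda$--length denominators and numerators regroup into products of three $X$'s, collapsing to the stated expressions. The sign case dichotomy $\epsilon(t_i)\ne\epsilon(t_j)$ versus $\epsilon(t_i)=\epsilon(t_j)$ enters precisely through the signs on the off-diagonal matrix entries, which is why case (1) gives $(X_i+X_j)^2$ and case (2) gives $(X_i-X_j)^2$.

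\textbf{Proving the strict inequality characterization.} For the ``moreover'' statement, writing $f=\frac{|X_k^2+X_l^2-(X_i\pm X_j)^2|}{X_kX_l}$, I would show $f>2$ if and only if \eqref{GTI} fails, i.e. if and only if some $X_m$ exceeds the sum of the other three. Clearing denominators, $f>2$ is equivalent to $|X_k^2+X_l^2-(X_i\pm X_j)^2|>2X_kX_l$, which splits into $X_k^2+X_l^2-(X_i\pm X_j)^2>2X_kX_l$ or $X_k^2+X_l^2-(X_i\pm X_j)^2<-2X_kX_l$. The first reads $(X_k-X_l)^2>(X_i\pm X_j)^2$ and the second reads $(X_i\pm X_j)^2>(X_k+X_l)^2$; factoring each as a difference of squares produces linear factors in the $X_m$ which, given $X_m>0$, pin down exactly the conditions ``$X_m>$ sum of the other three.'' I would carry this out in each of the two sign cases and verify that the union over all six curves $\gamma_{i,j}$ of the inequality conditions matches the complement of \eqref{GTI}.

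\textbf{Anticipated obstacle.} The main difficulty is not the inequality algebra, which is routine once the formulas are in hand, but rather the bookkeeping in the trace computation itself: correctly determining the standard position of each $\gamma_{i,j}$, the sequence of left/right turns, and the precise edges crossed, so that the matrix product simplifies to the clean symmetric expression claimed. A careful choice of reference curve (say $\gamma_a=\gamma_{3,4}$ as in Figure \ref{curves_asso}) together with the symmetry of the balanced triangulation under permutations of $\{t_1,t_2,t_3,t_4\}$ should let me compute one representative case explicitly and deduce the remaining five by symmetry, which I expect to be the cleanest route to the full list of six formulas.
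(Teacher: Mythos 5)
Your plan for parts (1) and (2) matches the paper's (essentially unwritten) argument: the paper disposes of these formulas with the single remark that they are ``a direct computation using Theorem \ref{SunYang}'', and your bookkeeping is the right way to carry it out. Indeed, $\gamma_{i,j}$ crosses the second edge joining the same pair of punctures as $e_{i,j}$ twice and the remaining four edges once each, so the Sun--Yang denominator regroups exactly as $X_kX_l$, and computing one representative case (say $\gamma_a=\gamma_{3,4}$) and invoking the symmetry of the balanced triangulation is sound; one should note that the clean forms in (1) and (2) use that in Euler class $\pm 1$ exactly one triangle carries the odd sign, which kills the cross terms $X_iX_k+X_jX_l$ that survive for other sign vectors (compare Lemma \ref{curves_e0}).

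The genuine gap is in your treatment of the ``moreover'' statement. Clearing denominators does split each condition into two disjuncts, but these do \emph{not} all ``pin down exactly the conditions $X_m>$ sum of the other three.'' In case (1), $f>2$ is equivalent to $|X_k-X_l|>X_i+X_j$ \emph{or} $X_i+X_j>X_k+X_l$: only the first disjunct factors into GTI-type conditions, while the second is a pairwise-sum comparison that holds on a large part of the region where \eqref{GTI} is satisfied. Symmetrically, in case (2) the disjuncts are $|X_k-X_l|>|X_i-X_j|$ or $|X_i-X_j|>X_k+X_l$, and the first is again not of GTI type. Moreover, the lemma asserts that \emph{all six} right-hand sides exceed $2$ (an intersection of conditions), whereas you propose to check that the \emph{union} of the conditions matches the complement of \eqref{GTI}; neither reading comes out of the algebra. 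Concretely, take $\epsilon=\epsilon_4$ and $(X_1,X_2,X_3,X_4)=(1,1,1,\tfrac 32)$ (realizable, since the three cusp entries of Lemma \ref{cusp} equal $\tfrac 52\neq 0$): the three mixed-sign curves give $\bigl|1+1-(\tfrac 52)^2\bigr|=\tfrac{17}{4}>2$ and the three same-sign curves give $\frac{1+\frac 94-0}{3/2}=\tfrac{13}{6}>2$, yet $\tfrac 32<1+1+1$, so \eqref{GTI} holds. So the honest version of your computation yields only the implication ``\eqref{GTI} fails $\Rightarrow$ all six values $>2$'' (which is the direction actually used in the proof of Theorem \ref{eul_pm1} (1) and (2)); the converse equivalence cannot be extracted from the stated formulas, and your verification step would stall precisely here. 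Any complete write-up must either confront this explicitly or restrict to the one provable implication, rather than treating the equivalence as routine difference-of-squares algebra.
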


In other words, the six $2$-sided curves associated to a balanced triangulations are all sent to hyperbolic elements if and only if one of the coordinates $X_i$ is strictly greater than the sum of the other three coordinates, i.e.  $X_i > X_j + X_k + X_l.$

Now, using the formulas for the change of coordinates explained in Section \ref{change}, we can prove Theorem \ref{eul_pm1} (1), (2) and (3).

\begin{proof}[Proof of Theorem \ref{eul_pm1} (1) and (2)]

Let $\cT$ be a balanced triangulation of $N_{1,3}$ and let $\mathcal{C}$ be the full-measure set in $(\R_{>0})^4$ defined in the proof of Proposition \ref{uncountable}. Let $s = s_i^-$ and consider the set $\mathcal{R} \subset \mathfrak{X}_1^s (N_{1,3})$ of decorated type-preserving representations $(\rho , d)$ whose triangle coordinates are in $\mathcal{C}.$ This is a full-measure subset of $\mathfrak{X}_1^s (N_{1,3})$, and any $[\rho] \in \mathcal{R}$  is such that all balanced triangulations $\cT$ of $N_{1,3}$ are $\rho$--admissible. 

Let $(x_1, x_2, x_3, x_4) \in \mathcal{C}$ be the triangle coordinates of such a representation and $\varepsilon(t_i)$ the signs of the triangle. Using the proof of Lemma \ref{comp}, we get that $x_1, x_2, x_3, x_4$ does not satisfy inequalities \eqref{GTI}.  Note that moreover, if $x_i$ is the largest coordinate, then we have $\epsilon (t_i) = 1.$

In that case, Lemma \ref{curves_e1} shows that the absolute values of the traces of the six curves associated to the $6$ edges of $\cT'$ are strictly greater than $2.$ Every other balanced triangulation $\cT'$ of $N_{1,3}$ is obtained from $\cT$ by a sequence of triangle switches. Such triangle switches do not modify the sign $s$ of the punctures or the Euler class of the representation, and hence the new triangle invariants still do not satisfy inequalities \eqref{GTI}.

 Theorem \ref{eul_pm1} (1) and (2) is now proved because every $2$--sided simple closed curve in $N_{1,3}$ is the curve associated to some edge of some balanced triangulation $\cT'$ of $N_{1,3}.$ And hence, all $2$--sided simple closed curves are sent to hyperbolic elements.
\end{proof}

\begin{proof}[Proof of Theorem \ref{eul_pm1} (3)] 

We want to prove that for all type-preserving representations $\rho$ in $\mathfrak{X}_1^{s^+} (N_{1,3})$ or   $\mathfrak{X}_{-1}^{s^-} (N_{1,3})$, there exists a $2$-sided simple closed curve that is not sent to a hyperbolic element. It is sufficient to assume that $\rho \in \mathfrak{X}_1^{s^+} (N_{1,3})$, the other case being similar. Choose an arbitrary decoration of $\rho$, and let $\cT$ be a balanced triangulation of $N_{1,3}.$ To prove the theorem it suffices to find a triangulation $\cT'$, such that one of the $2$-sided curves associated to one of the edge of the triangulation, is sent to a non-hyperbolic element. The strategy for finding $\cT'$ is to use a \emph{trace reduction algorithm}, that is a rule which defines a sequence of balanced triangulations until we obtain a triangulation satisfying the above conditions.
	
	\emph{Trace Reduction Algorithm: }
	Let $\cT_0 = \cT$ and suppose that $\cT_n$ is obtained. 
	\begin{itemize}
		\item If $\cT_n$ is not $\rho$-admissible, then there is an edge $e$ of $\cT_n$ that is not $\rho$-admissible, and the element of $\pi_1 (N_{1, 3} )$ represented by the 2-sided simple closed curve in $\cT_n$ associated to $e$ is sent by $\rho$ to a parabolic element of $\PGL (2 , \R).$ So in that case the algorithm stops. 
		\item If $\cT_n$ is $\rho$-admissible, then we let $(\lambda , \epsilon)$ be the coordinates of $(\rho , d)$ in $\cT_n.$ If the quantities $X_1 , X_2 , X_3 , X_4$ satisfy the generalized triangle inequalities \eqref{GTI}, then by Lemma \ref{curves_e1}, an element of $\pi_1 (N_{1,3})$ is sent to a parabolic or elliptic element. So in that case the algorithm stops.
		\item Otherwise, there is a unique maximum among $X_1^{(n)}, X_2^{(n)},  X_3^{(n)}, X_4^{(n)}$, and in this case we let $\cT_{n+1}$ be the balanced triangulation obtained by doing a triangle switch along the triangle realizing the maximum. 
	\end{itemize}
	The result follows from the Lemma below.
\end{proof}

\begin{Lemma}
	The trace reduction algorithm stops in finitely many steps
\end{Lemma}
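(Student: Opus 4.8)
The plan is to show that the trace reduction algorithm produces a strictly decreasing quantity that cannot decrease indefinitely, forcing termination. The natural candidate to track is the sum of the triangle coordinates, or rather a projectively-normalized version of it. Working in the projective simplex $\Delta$, I would first note that at each non-terminating step the algorithm performs a triangle switch $S_l$ along the triangle $t_l$ realizing the \emph{unique} maximum among $X_1^{(n)}, X_2^{(n)}, X_3^{(n)}, X_4^{(n)}$. Since we are in the case $e(\rho)=1$ with $s=s^+$, the sign vector is $\epsilon=\epsilon_l$ (so $\epsilon(t_l)=+1$ and $\epsilon(t_i)=+1$ for the maximal index as well), and crucially the coordinates do \emph{not} satisfy \eqref{GTI} precisely when there is a strict maximum exceeding the sum of the other three. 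I would use the change-of-coordinate formulas from Lemma \ref{ch_e1}: for $e(\rho)=1$ with $\epsilon=\epsilon_l$, the new $l$-th coordinate is
$$X'_l = \frac{(-X_i+X_j+X_k)(X_i-X_j+X_k)(X_i+X_j-X_k)}{X_l^2},$$
while $X'_i = \frac{|-X_i+X_j+X_k|}{X_l}X_i$ for $i\neq l$ (here $\{i,j,k\}=\{1,2,3,4\}\setminus\{l\}$ in the appropriate pairing).

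The key computation is to track $\max_i X_i$ after normalizing projectively. Since $X_l$ is the strict maximum and $X_l > X_i+X_j+X_k$ fails exactly at termination, at every active step we have that $t_l$ realizes the maximum but \eqref{GTI} is violated by \emph{some} index; after the switch I would verify that the new maximum coordinate, measured in the projective normalization (say rescaling so that $\sum X'_i$ is fixed, or equivalently comparing ratios $X'_l/X'_i$), strictly decreases. Concretely, the plan is to show that the ratio of the largest to the smallest coordinate, or the quantity $\max_i X_i - \sum_{j\neq i}X_j$ suitably normalized, strictly decreases at each step and is bounded below, so it cannot pass through infinitely many values while the coordinates remain in $\mathcal C$. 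An alternative cleaner monotone quantity worth trying first is the projective coordinate of the maximal triangle: after an $S_l$-switch the triangle $t_l$ is ``turned upside down,'' and one expects its associated coordinate to drop below the sum of the others, which is exactly the condition \eqref{GTI} that triggers termination.

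The main obstacle is establishing genuine strict monotonicity of whatever quantity I choose, and ruling out cycling or infinitely slow convergence. Because the coordinates live in the open simplex and the switch map is a rational (not contraction) map, the decrease need not be by a uniform additive amount, so a naive ``strictly decreasing sequence in a bounded set must converge'' argument does not immediately give termination in \emph{finitely} many steps. I would address this by arguing that the triangle switches realize a sequence of distinct balanced triangulations in the tree $\Upsilon$ (which is embedded via Proposition \ref{equiv_tri}), and that the monotone quantity, being a coordinate ratio, must eventually drop below the threshold where \eqref{GTI} holds; once \eqref{GTI} is satisfied the algorithm stops by its second bullet, and $\rho$-inadmissibility (first bullet) can only help. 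The cleanest route is likely to exhibit an integer-valued or combinatorial monotone invariant — for instance the combinatorial distance in $\Upsilon$ toward the ``balanced'' region, or the number of coordinates strictly exceeded — rather than a purely analytic one, since that directly yields finiteness. I expect verifying that the chosen switch always strictly moves us toward the \eqref{GTI}-region (and never away) to be the crux of the argument.
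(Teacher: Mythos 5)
Your plan stalls exactly where you flag the crux, and the gap is real. Two of your guiding expectations are in fact false or unestablished: first, after the switch along the maximal triangle it is \emph{not} true that the new coordinates must satisfy \eqref{GTI} or move monotonically toward it --- the paper's proof shows that after switching along $t_4$ (with $X_4\geq X_3\geq X_2\geq X_1$) one always has $X_1'+X_2'+X_3'\geq X_4'$, but a \emph{different} coordinate can now violate \eqref{GTI}, namely $X_3'>X_1'+X_2'+X_4'$ is possible, and this situation (their Subcase 3) can recur for arbitrarily many steps, with the orbit alternating between two regions $\mathcal L$ and $\mathcal R$ of the projected simplex. So no normalized maximum, max/min ratio, or deficit $\max_i X_i-\sum_{j\neq i}X_j$ is known (or claimed in the paper) to decrease step by step. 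Second, your proposed rescue --- an integer-valued combinatorial invariant such as distance in $\Upsilon$ toward the \eqref{GTI} region --- does not exist in any evident form: $\Upsilon$ is an infinite tree, a hypothetical non-terminating run visits infinitely many distinct triangulations, and there is no combinatorially defined ``balanced region'' to measure distance to. What actually closes the argument in the paper is a \emph{uniform multiplicative} estimate in the recurrent subcase: setting $a_n=(X_1^{(n)}+X_2^{(n)})/\sum_i X_i^{(n)}$, one shows the ratio $(X_1^{(n)}+X_2^{(n)})/X_3^{(n)}$ is non-decreasing and that
$$\frac{a_{n+1}}{a_n}\;\geq\;\frac{1}{1-2a_n}\;\geq\;\frac{1}{1-2a_0}\;>\;1,$$
so $a_n$ grows geometrically and must exceed $\tfrac14$ after finitely many steps, which is incompatible with remaining in $\mathcal L\cup\mathcal R$; together with the case analysis showing that outside Subcase 3 the algorithm halts within two further steps, this yields finiteness. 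You correctly diagnosed that strict monotonicity without a uniform rate proves nothing about finite termination, but you never supply the rate, and that estimate is the entire content of the lemma.

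A smaller but substantive slip: for $e(\rho)=1$ with $s=s^+$ the maximal coordinate belongs to the triangle of sign $-1$ (in the paper's notation $\epsilon=\epsilon_l$ means $\epsilon(t_l)=-1$, and the proof of Theorem \ref{comp} gives $\epsilon(t_4^{(n)})=-1$ when $X_4^{(n)}$ is maximal), not $+1$ as you assert; the change-of-coordinate formulas you quote from Lemma \ref{ch_e1} take a specific form precisely because the switch is performed along this negative triangle, and the signs enter the three factors $(-X_1+X_2+X_3)$, $(X_1-X_2+X_3)$, $(X_1+X_2-X_3)$ whose positivity/negativity drives the whole case analysis.
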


\begin{proof} Without loss of generality, we assume that the trace reduction algorithm didn't stop at step $n \in \N$ and that in $\mathcal T_n$ we have $X_4^{(n)}\geq X_3^{(n)}\geq X_2^{(n)}\geq X_1^{(n)}.$ Since $\rho$ is in $\mathfrak{X}_1^{s^+} (N_{1,3} ),$ by the proof of Theorem \ref{comp} we have that $\epsilon(t_4^{(n)})=-1$ and $\mathcal T_{n+1}$ is obtained from $\mathcal T_n$ by a switch along $t_4^{(n)}.$ In order to simplfy the notation, we will denote $X_i^{(n)} = X_i$ and $X_i^{(n+1)} = X_i'$, when there is no ambiguity.

By Lemma \ref{ch_e1}, we have the following expressions:

$$\begin{dcases}
		X_1' = \frac{-X_1+X_2+X_3}{X_4} X_1;\\  
		X_2'= \frac{X_1-X_2+X_3 }{X_4} X_2;\\
		X_3' = \frac{|X_1+X_2-X_3 |}{X_4} X_3; \\
		X_4'=  \frac{(-X_1+X_2+X_3)(X_1-X_2+X_3)|X_1+X_2-X_3|}{(X_4)^2}.
      \end{dcases}$$

We get that
\begin{equation*}
\begin{split}
X_1'+X_2'+X_3'\geq & \frac{-X_1+X_2+X_3}{X_4} X_1 + \frac{X_1-X_2+X_3 }{X_4} X_2 + \frac{-X_1-X_2+X_3 }{X_4} X_3\\
	=&\frac{(-X_1+X_2+X_3)(X_1-X_2+X_3)}{X_4}\\
= &\frac{(-X_1+X_2+X_3)(X_1-X_2+X_3)X_4}{(X_4)^2}\\
\geq &\frac{(-X_1+X_2+X_3)(X_1-X_2+X_3)|X_1+X_2-X_3|}{(X_4)^2} = X_4',\\
\end{split}
\end{equation*}
where the last inequality comes from $X_4 \geq X_1+X_2+X_3 \geq |X_1+X_2-X_3|.$

We now distinguish two cases:

\underline{Case I}: $X_1+X_2 \geq X_3.$

In that case, the computation above gives 
$$X_1'+X_2'-X_3' \geq X_4'.$$
By the symmetry of these formulas in $X_1,X_2,X_3$, we also have 
$$X_2'+X_3'+X_4' \geq X_1'$$
and 
$$X_1'+X_3'+X_4' \geq X_2'.$$

Hence the new coordinates $(X_1', X_2', X_3', X_4')$ satisfy inequalities \eqref{GTI} and the algorithm stops.

\underline{Case II}: $X_1+X_2 < X_3.$

By a direct computation, we have
$$X_1'+X_3'-X_2'=\frac{(X_3-X_2)^2-(X_1)^2}{X_4}>0,$$
where the inequality comes from $X_3-X_2>X_1.$ As a consequence, 
$$X_1'+X_3'+X_4'>X_1'+X_3' \geq X_2'.$$

Similarly we get $X_2'+X_3'-X_1' >0$, and hence $X_2'+X_3'+X_4' >X_1'.$

We have now three subcases:

\underline{Subcase 1}: $X_1'+X_2'+X_4' \geq X_3'.$

In this subcase, the new coordinates $(X_1', X_2', X_3', X_4')$ satisfy inequalities \eqref{GTI} and the algorithm stops.

\underline{Subcase 2}:  $X_1'+X_2'+X_4' < X_3'$ and $X_1'+X_2' \geq X_4'.$

In this subcase, the new coordinates fall in Case I above (exchanging the role of $X_3$ and $X_4$) and the trace reduction will finish at step $n+2.$

\underline{Subcase 3}: $X_1'+X_2'+X_4' < X_3'$ and $X_1'+X_2' < X_4'.$

In this subcase, the new coordinates will fall again in case II, (exchanging the role of $X_3$ and $X_4$). As a consequence, we see that the Trace reduction does not finish in a finite number of step if and only the coordinates are always in this subcase for each $n.$ Assuming this, we will now get a contradiction.

We note that:
\begin{align*}
	\frac{X_1^{(n+1)}+X_2^{(n+1)} }{X_3^{(n+1)}} =& \dfrac{\frac{-X_1^{(n)} + X_2^{(n)}+ X_3^{(n)} }{X_4^{(n)}} X_1 + \frac{X_1^{(n)}-X_2^{(n)}+X_3^{(n)} }{X_4^{(n)}} X_2 }{\frac{-X_1^{(n)}-X_2^{(n)}+X_3^{(n)} }{X_4^{(n)}} X_3} \\
			 \geq  &    \dfrac{(X_1^{(n)}-X_2^{(n)}+X_3^{(n)})}{(-X_1^{(n)}-X_2^{(n)}+X_3^{(n)}) } \frac{X_1^{(n)}+X_2^{(n)} }{X_3^{(n)} } \\
			\geq & \frac{X_1^{(n)}+X_2^{(n)} }{X_3^{(n)}},
\end{align*}
where the two inequalities follow from $X_3^{(n)} > X_1^{(n)} + X_2^{(n)} $ and $X_1^{(n)} \geq X_2^{(n)}.$

We project the sequence $X^{(n)} = (X_1^{(n)},X_2^{(n)},X_3^{(n)},X_4^{(n)})$ in the triangle $$\mathcal{D} =\{  (a,b,c) \in (0,1)^3 \, | \, a+b+c = 1 \}$$ by the following map:

$$\phi (X^{(n)}) = \dfrac{1}{(X_1^{(n)}+X_2^{(n)}+X_3^{(n)}+X_4^{(n)})} ((X_1^{(n)}+X_2^{(n)},X_3^{(n)},X_4^{(n)})) = (a_n, b_n, c_n)$$

The regions corresponding to the subcase 3 are the two regions:
$$\mathcal{L} = \{ (a,b,c) | c> a+b , b> a \} \mbox{   and   } \mathcal{R} = \{ (a,b,c) | b> a+c , c>a \} $$
These regions correpond to the grey regions on Figure \ref{fig:tracered1}. The trace reduction does not finish if $\phi(X^{(n)})$ alternates between these two regions.

    \begin{figure}
[hbt] \centering
\includegraphics[height=6 cm]{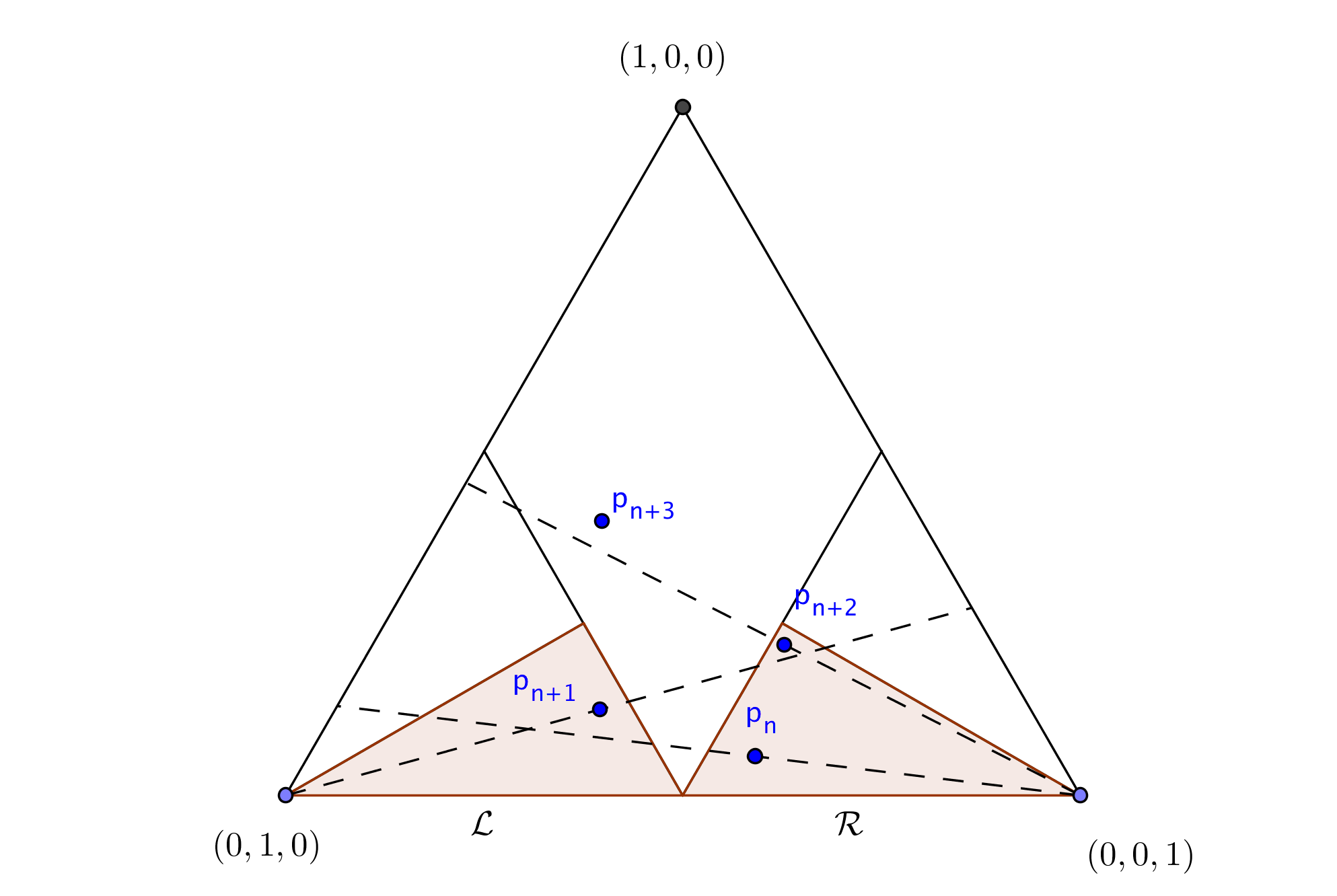}
\caption{Example of a sequence exiting $\mathcal{L}$ and $\mathcal{R}$ in three steps}
\label{fig:tracered1}
\end{figure}

Assume that $(a_n , b_n , c_n)$ is in $\mathcal{R}.$ The previous computation shows that $\frac{a_{n+1}}{b_{n+1}} > \frac{a_n}{b_n}.$ So the new point $(a_{n+1},b_{n+1},c_{n+1})$ is above the line joining the vertex $(0,0,1)$ of $\mathcal{D}$ to $(a_n, b_n, c_n)$, see Figure \ref{fig:tracered1}, but it's also in $\mathcal{L}.$ A simple geometric computation shows that the sequence $(a_n)$ is increasing and also that $\frac{a_{n+1}}{a_n} \geq \frac{1}{1-2a_n}  \geq \frac{1}{1-2a_0} > 1.$ Hence we get that $a_{n} > \left( \frac{1}{1-2a_0} \right)^n a_0$ for all $n \in \N.$ So we can see that there exists $n$ such that $a_n \geq \frac{1}{4} $ and hence $(a_n, b_n, c_n) \notin \mathcal{L} \cup \mathcal{R}$, which is a contradiction. This ends the proof of the Lemma.   
\end{proof}

\section{Euler class $\mathrm{e}(\rho) = 0$} \label{eul_0}

In this section we will discuss the case of $e(\rho) = 0$ and prove Theorem \ref{eul_pm1} (4). 

Let $\rho$ be a type-preserving representation of $\Gamma_{1,3}$ and let $d$ be a decoration of $\rho.$  Let $\cT$ be a $\rho$--admissible balanced triangulation of $N_{1,3}$ with set of edges $E$ and set of triangles $T$, and let $(\lambda, \epsilon) \in \R_{>0}^E \times \{\pm 1\}^T$ be the coordinates of $[(\rho, d)]\in \mathfrak{X}^d_{1}(N_{1,3})$, that is the $\lambda$--lengths of the edges and the signs of the triangles. We will follow the notation of Section \ref{eul_1}. 

If $\mathrm{e}(\rho) = 0$, by the formula \eqref{euler_sign} there are two triangles with positive sign and two triangles with negative sign. The relative signs of the triangles are determined by the relative signs of the punctures, according to Section \ref{sec:back}. Namely, if $\rho \in \mathfrak{X}_0^{s_i^{\pm}} (N_{1,3} )$, then there are two arcs in the triangulation that are not adjacent to the puncture $v_i$ and that do not belong to a common triangle. For each of these two arcs, the pair of triangles adjacent to the arc have the same sign, and the two pairs of triangles have opposite sign. All choices are equivalent for the computations and proofs.

Using the Trace Formulas (\ref{trace_formula}),  we obtain the following expressions for the (absolute value of the) trace of the six 2-sided curves associated with each arc in the triangulation $\cT,$ in terms of the triangle parameters.

\begin{Lemma}[Trace of main curves]\label{curves_e0}
  Let $\gamma_{i,j}$ be the distinguished simple closed curves corresponding to the arc  adjacent to the triangles $t_i$ and $t_j.$ Let $\{i, j, k, l\} = \{1, 2, 3, 4\}.$ 
  \begin{enumerate}
    \item If $\epsilon(t_i)\neq \epsilon(t_j),$ then $|\mathrm{tr} \rho (\gamma_{i,j})| =  \displaystyle\frac{\left(\epsilon(t_1)X_1+\epsilon(t_2)X_2+\epsilon(t_3)X_3+\epsilon(t_4)X_4 \right)^2 + 2 X_k X_l}{X_k X_l}.$
Moreover, we have $|\mathrm{tr} \rho (\gamma_{i,j})|>2.$
    \item If $\epsilon(t_i) = \epsilon(t_j),$ then $|\mathrm{tr} \rho (\gamma_{i,j}) |=  \displaystyle \left| \frac{\left(\epsilon(t_1)X_1+\epsilon(t_2)X_2+\epsilon(t_3)X_3+\epsilon(t_4)X_4 \right)^2 - 2 X_k X_l}{X_k X_l} \right|$.
Moreover, we have $|\mathrm{tr} \rho (\gamma_{i,j})) | \leqslant 2 $ if and only if $X_i, X_j , X_k , X_l$ satisfy the following set of inequalities:

\begin{equation} \label{sqGTI}
	\left\{ \begin{array}{ll}     
	\sqrt{X_k} &\leqslant  \sqrt{X_l} + \sqrt{X_i + X_j};\\
     \sqrt{X_l} &\leqslant  \sqrt{X_k} + \sqrt{X_i + X_j};\\          
     \sqrt{X_i + X_j} &\leqslant  \sqrt{X_k} + \sqrt{X_l}. \end{array} \right.
\end{equation}
  \end{enumerate}
\end{Lemma}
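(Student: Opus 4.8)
The plan is to prove both trace formulas by one direct application of the Sun--Yang trace formula (Theorem \ref{SunYang}), exactly as Lemma \ref{curves_e1} was obtained in the $e(\rho)=\pm1$ case, and then to extract the two ``moreover'' statements by elementary inequalities. First I would work in the balanced triangulation $\cT=\{t_1,t_2,t_3,t_4\}$ of Figure \ref{fig:triang} and, for $\{i,j,k,l\}=\{1,2,3,4\}$, put the curve $\gamma_{i,j}$ associated with the common edge $e_{i,j}$ of $t_i$ and $t_j$ into standard position. A short topological inspection (using that $N_{1,3}$ is non-orientable, so that the $2$--sided curve must traverse the crosscap region) shows that $\gamma_{i,j}$ is disjoint from $e_{i,j}$, crosses the opposite edge $e_{k,l}$ -- the common edge of the two remaining triangles $t_k$ and $t_l$ -- twice, and crosses each of the four remaining edges once; it therefore passes through $t_i$ and $t_j$ once each and through $t_k$ and $t_l$ twice each, and the denominator $\prod\lambda(e_{i_m})$ in \eqref{trace_formula} is exactly $X_kX_l$. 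By the symmetry of $\cT$ under permutations of its triangles it suffices to carry this out for one representative curve and relabel.

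The computational core is to multiply the six matrices $M(t)$ along $\gamma_{i,j}$, keeping track of the left/right turns and of the off-diagonal entries $\epsilon(t)\lambda(e_3)$, and to simplify the resulting trace using $X_m=\lambda(e_1)\lambda(e_2)\lambda(e_3)$. Setting $P=\epsilon(t_1)X_1+\epsilon(t_2)X_2+\epsilon(t_3)X_3+\epsilon(t_4)X_4$, the product collapses to $|\mathrm{tr}\rho(\gamma_{i,j})|=(P^2+2X_kX_l)/(X_kX_l)$ when $\epsilon(t_i)\neq\epsilon(t_j)$ and to $|\mathrm{tr}\rho(\gamma_{i,j})|=|P^2-2X_kX_l|/(X_kX_l)$ when $\epsilon(t_i)=\epsilon(t_j)$; the sign in front of $2X_kX_l$ is $-\epsilon(t_k)\epsilon(t_l)=-\epsilon(t_i)\epsilon(t_j)$ (these agree because $e(\rho)=0$ forces two $+1$'s and two $-1$'s), produced by the two off-diagonal contributions along the doubly-crossed edge $e_{k,l}$. \textbf{This six-fold matrix bookkeeping is the main obstacle}: the algebra is formal, but one must keep the turn pattern and the four signs straight, and it is exactly here that the $e(\rho)=0$ sign configuration enters and distinguishes the statement from Lemma \ref{curves_e1}.

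For the ``moreover'' in part (1) I would rewrite the trace as $P^2/(X_kX_l)+2$. Since $e(\rho)=0$ forces $\epsilon=\epsilon_{i,j}$ for some pair, a direct check against Lemma \ref{cusp} shows that in this sign pattern all three off-diagonal entries of the peripheral holonomies equal $\pm P$, so that the $\rho$--admissibility of $\cT$ is precisely the condition $P\neq0$ -- this is the description of $\Delta^{i,j}$ preceding Corollary \ref{corol_comp}. Hence $P^2/(X_kX_l)>0$ and $|\mathrm{tr}\rho(\gamma_{i,j})|>2$, as claimed.

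Finally, part (2) is purely algebraic. From $|\mathrm{tr}\rho(\gamma_{i,j})|=|P^2/(X_kX_l)-2|$ one has
\begin{equation*}
  |\mathrm{tr}\rho(\gamma_{i,j})|\leqslant2 \iff 0\leqslant P^2\leqslant4X_kX_l \iff |P|\leqslant2\sqrt{X_kX_l}.
\end{equation*}
Here $\epsilon(t_i)=\epsilon(t_j)$ and the remaining two signs are opposite, so $|P|=|X_i+X_j-X_k-X_l|$, and the inequality $|X_i+X_j-(X_k+X_l)|\leqslant2\sqrt{X_kX_l}$ is equivalent to $(\sqrt{X_k}-\sqrt{X_l})^2\leqslant X_i+X_j\leqslant(\sqrt{X_k}+\sqrt{X_l})^2$. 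Taking square roots turns the upper bound into the third inequality of \eqref{sqGTI} and the lower bound into the first two (the lower bound splitting in two according to the sign of $\sqrt{X_k}-\sqrt{X_l}$), which completes the characterisation.
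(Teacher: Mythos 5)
Your proposal is correct and takes essentially the same route as the paper: there too, both formulas are obtained as direct applications of Theorem \ref{SunYang} (the six-matrix bookkeeping you flag is simply asserted as a ``direct application''), the ``moreover'' in (1) is dismissed as straightforward (your observation via Lemma \ref{cusp} that all three parabolic off-diagonal entries equal $\pm P$, so type-preservation forces $P\neq 0$, is exactly the omitted one-liner), and the proof proper is devoted to the characterisation in (2). The only cosmetic difference is there: where you run the equivalence chain $|\mathrm{tr}\,\rho(\gamma_{i,j})|\leqslant 2 \iff |P|\leqslant 2\sqrt{X_kX_l} \iff (\sqrt{X_k}-\sqrt{X_l})^2\leqslant X_i+X_j\leqslant (\sqrt{X_k}+\sqrt{X_l})^2$ and take square roots, the paper sets $p=\sqrt{X_k}$, $q=\sqrt{X_l}$, $r=\sqrt{X_i+X_j}$ and uses the Heron-type factorisation $(p^2+q^2-r^2)^2-4p^2q^2=(p+q+r)(p-q-r)(q-p-r)(r-p-q)$ together with a sign analysis of the linear factors --- the same elementary algebra in a different package.
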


\begin{proof}
	The formulas for $|\mathrm{tr} \rho (\gamma_{i,j})|$ are direct applications of the Therem \ref{SunYang}, while the second part of $(1)$ is straightforward, so we only need to discuss the last statement in $(2),$ with $\epsilon(t_i)=\epsilon (t_j).$ Let $ p = \sqrt{X_k}, q = \sqrt{X_l}$ and $r = \sqrt{X_i + X_j}.$ We have
$$  -2 < \dfrac{(p^2+q^2-r^2)^2}{p^2q^2} -2 =\dfrac{(p+q+r)(p-q-r)(q-p-r)(r-p-q)}{p^2q^2}+2.$$

The absolute value of the above quantity is equal to $|\mathrm{tr} \rho (\gamma_{i,j})|.$

If $p>q+r,$ then $(p-q-r)(q-p-r)(r-p-q)>0$ and hence the left-hand side of the equation is greater than $2.$ The case $q > p+r$ and $r> p+q$ are similar.

If $ |\mathrm{tr} \rho (\gamma_{i,j}) | > 2,$ then $(p-q-r)(q-p-r)(r-p-q)>0$ and one of the inequalities in \eqref{sqGTI} is satisfied.
\end{proof}

	\begin{Remark} We don't need the actual sign of the new triangles but only their relative signs. 
	\end{Remark}

We can now prove the main theorem of this section. 

\begin{Theorem}\label{thm_eul_0}
  Every non-elementary type-preserving representation  $\rho\co\pi_1(N_{1,3})\to \PGL(2,\R)$ with relative Euler class $e(\rho)= 0$ sends some non-peripheral simple closed curve to an elliptic or parabolic element. More precisely, if $\rho \in \mathfrak{X}_0^{s_i^{\pm}} (N_{1,3} ),$ then all $2$-sided simple closed curve that separate the two punctures $v_j$ and $v_k,$ where $\{i,j,k \} = \{ 1, 2 , 3 \},$ are sent to hyperbolic elements, and there exists some $2$-sided simple closed curve that does not separate $v_j$ and $v_k$ which is sent to a non-hyperbolic element.
\end{Theorem}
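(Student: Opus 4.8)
The strategy is to combine the explicit trace formulas of Lemma~\ref{curves_e0} with a dynamical argument on the triangle coordinates, mirroring the trace-reduction scheme used in the $e(\rho)=\pm 1$ case. Fix $\rho \in \mathfrak{X}_0^{s_i^{\pm}}(N_{1,3})$ and a $\rho$--admissible balanced triangulation $\cT$. By the symmetry noted before Lemma~\ref{curves_e0}, we may arrange the labelling so that $\epsilon(t_i)=\epsilon(t_j)$ and $\epsilon(t_k)=\epsilon(t_l)$, with the two same-sign pairs corresponding to the two arcs not adjacent to $v_i$. The proof splits into two assertions: first, that the curves separating the other two punctures are always hyperbolic, and second, that at least one non-separating curve is non-hyperbolic.

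\textbf{The separating curves.} I would first identify, combinatorially, which of the six distinguished curves $\gamma_{i,j}$ separate the punctures $v_j$ and $v_k$ (in the statement's indexing); these are precisely the curves attached to edges whose two adjacent triangles carry \emph{opposite} signs, so that they fall under case~(1) of Lemma~\ref{curves_e0}. That lemma already gives $|\mathrm{tr}\,\rho(\gamma_{i,j})| = \bigl((\sum_m \epsilon(t_m)X_m)^2 + 2X_kX_l\bigr)/(X_kX_l) > 2$ unconditionally, since $X_kX_l>0$ and the numerator exceeds $2X_kX_l$. Thus these curves are hyperbolic, independently of the point in $\Delta$, and this half requires only the observation that they persist as opposite-sign curves under any triangle switch (since switches permute but do not merge the two sign-classes), so every such simple closed curve in $N_{1,3}$ is covered.

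\textbf{The non-separating curves.} This is the substantive half. Here the relevant curves fall under case~(2), where $|\mathrm{tr}\,\rho(\gamma_{i,j})|\leqslant 2$ holds exactly when $\sqrt{X_k},\sqrt{X_l},\sqrt{X_i+X_j}$ satisfy the triangle inequalities~\eqref{sqGTI}. I would argue by contradiction: suppose that for \emph{every} balanced triangulation the inequalities~\eqref{sqGTI} fail for every same-sign pair, so all non-separating curves are hyperbolic. I would then set up a trace-reduction algorithm as in Section~\ref{eul_1}, at each step performing the switch $S_l$ along the triangle realizing the maximum coordinate, and track the projectivized coordinates in the simplex using the change-of-coordinate formula of Lemma~\ref{ch_e0}, in particular the clean projective form $[X_i,X_j,X_k,(X_i+X_j-X_k)^2/X_l]$. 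The goal is to show this process cannot continue indefinitely while~\eqref{sqGTI} keeps failing.

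\textbf{The main obstacle.} The crux, as in the $e(\rho)=\pm 1$ case, is proving termination of the reduction: the dynamical system on the projective simplex must be shown to escape the ``bad'' region after finitely many steps. I would imitate the monotonicity argument at the end of Section~\ref{eul_1}, projecting the sequence into a two-dimensional simplex and isolating a quantity (analogous to the ratio $a_n/b_n$ there, but adapted to the $\sqrt{\phantom{X}}$-weighted inequalities~\eqref{sqGTI}) that grows geometrically while the coordinates remain in the bad region, forcing a coordinate past a fixed threshold and yielding the contradiction. The delicate point is choosing the correct monotone quantity for the square-root form of the inequalities and verifying the geometric growth estimate; once termination is established, the terminal triangulation is either non-$\rho$-admissible (giving a parabolic non-separating curve) or satisfies~\eqref{sqGTI} (giving an elliptic or parabolic non-separating curve by Lemma~\ref{curves_e0}(2)), which completes the proof. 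The non-elementarity hypothesis should be used to rule out the degenerate case where the candidate curve is peripheral or the representation is reducible, ensuring the curve produced is genuinely an essential non-peripheral simple closed curve.
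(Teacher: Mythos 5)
Your outline reproduces the paper's architecture for this theorem almost exactly: the same trace-reduction algorithm (switch along the triangle realizing the maximal coordinate, with the change-of-coordinates of Lemma~\ref{ch_e0}), the same two stopping conditions (a non-admissible edge gives a parabolic edge-curve; coordinates satisfying~\eqref{sqGTI} give an elliptic or parabolic same-sign curve via Lemma~\ref{curves_e0}(2)), and the same treatment of the separating curves through case~(1) of Lemma~\ref{curves_e0} together with the invariance of the sign pattern of the punctures under switches. (One small point on that half: strict hyperbolicity in case~(1) needs $\sum_m \epsilon(t_m)X_m \neq 0$, which holds because this quantity is, up to sign, an off-diagonal cusp entry of Lemma~\ref{cusp} and $\rho$ is type-preserving; it is worth saying explicitly.)

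The genuine gap is the termination lemma, which is the entire analytic content of the theorem, and the route you sketch for it points in the wrong direction. You propose importing the mechanism of Section~\ref{eul_1}: a ratio (the analogue of $a_n/b_n$) that grows geometrically while the orbit stays in the bad region, forcing a coordinate past a fixed threshold after finitely many steps. The $e(\rho)=0$ dynamics do not behave this way, and the paper's proof shows why: assuming the algorithm never stops, one normalizes the coordinates to sum to $1$ and shows that the quantity $u_n = |a_n + b_n - \tfrac{1}{2}|$ is strictly \emph{decreasing}; a fixed-point computation shows its only possible limits are $0$ and $\tfrac{1}{2}$, the latter is excluded, so $u_n \to 0$, whence $\max(a_n,b_n)$ and $\max(c_n,d_n)$ tend to $\tfrac{1}{2}$ while the two minimal coordinates tend to $0$; the contradiction is that the switch formulas simultaneously force those two minimal coordinates to be increasing. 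In other words, a hypothetical non-terminating orbit does not escape past any threshold by monotone growth --- it converges to a degenerate boundary configuration, and the contradiction is asymptotic, built from a decreasing Lyapunov-type quantity plus a monotonicity clash in the limit. The ``correct monotone quantity'' that you flag as the delicate point is therefore exactly the missing idea, and it is of the opposite character to the one you propose: for the square-root inequalities~\eqref{sqGTI} there is no evident coordinate or ratio with geometric growth in the bad region, whereas the decreasing $u_n$ and the subsequent Steps~1--4 of the paper's lemma are what actually carry the proof. As written, your proposal is a correct frame whose load-bearing lemma is unproven, and the suggested way of proving it would not go through by direct imitation of the $e(\rho)=\pm 1$ argument.
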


\begin{proof}
	It is sufficient to assume that $\rho$ is in $\mathfrak{X}_0^{s_1^+} (N_{1,3}).$ (The other cases can be proved similarly.) Choose an arbitrary decoration $d$ of $\rho,$ and let $\cT$ be a balanced triangulation of $N_{1,3}.$ To prove the first part of the theorem it suffices to find a triangulation $\cT',$ such that one of the $2$-sided curves associated to one of the edge of the triangulation, is sent to a non-hyperbolic element. The principle of the proof relies on a Trace reduction algorithm, similar to the one used in the proof of Theorem \ref{eul_pm1} (3). 

	\emph{Trace Reduction Algorithm:}
	Let $\cT_0 = \cT$ and suppose that $\cT_n$ is obtained. 
	\begin{itemize}
		\item If $\cT_n$ is not $\rho$-admissible, then there is an edge $e$ of $\cT_n$ that is not $\rho$-admissible, and the element of $\pi_1 (N_{1, 3} )$ represented by the 2-sided simple closed curve in $\cT_n$ associated to $e$ is sent by $\rho$ to a parabolic element of $\PGL (2 , \R).$ So in that case the algorithm stops. 
		\item If $\cT_n$ is $\rho$-admissible, then we let $(\lambda , \epsilon)$ be the coordinates of $(\rho , d)$ in $\cT_n.$ If the quantities $X_1 , X_2 , X_3 , X_4$ satisfy the  inequalities \eqref{sqGTI}, then by Lemma \ref{curves_e0}, an element of $\pi_1 (N_{1,3})$ is sent to a parabolic or elliptic element. So in that case the algorithm stops.
		\item Otherwise, there is a unique maximum among $X_1^{(n)}, X_2^{(n)},  X_3^{(n)}, X_4^{(n)},$ and in this case we let $\cT_{n+1}$ be the balanced triangulation obtained by doing a triangle switch along the triangle realizing the maximum. 
	\end{itemize}
	The result follows from the Lemma below.
\end{proof}
	
	\begin{Lemma}
	The trace reduction algorithm stops in finitely many steps
	\end{Lemma}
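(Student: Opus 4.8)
The plan is to assume, for contradiction, that the algorithm never halts, so that it produces an infinite sequence of $\rho$--admissible balanced triangulations $\cT_0, \cT_1, \cT_2, \ldots$ with triangle coordinates $X^{(n)} = (X_1^{(n)}, X_2^{(n)}, X_3^{(n)}, X_4^{(n)})$, none of which is stopped by the admissibility test or by the inequalities \eqref{sqGTI}. After relabelling according to the sign pattern (allowed by the symmetry remarked before Lemma \ref{curves_e0}), I may assume the two triangles of one sign form the pair $\{1,2\}$ and the other two the pair $\{3,4\}$, and, since $\rho\in\mathfrak{X}_0^{s_1^+}$, that the larger-sum pair is exactly the $+$ pair. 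By part (1) of Lemma \ref{curves_e0} the four curves associated to the different-sign edges always have trace of absolute value strictly greater than $2$, so throughout the run only the two curves associated to the same-sign edges can possibly fail to be hyperbolic, and these are governed by \eqref{sqGTI}.

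The first key step is to pin down the sign of the triangle carrying the maximal coordinate. I claim that \emph{whenever the algorithm actually performs a switch, the maximum lies in the larger-sum} ($+$) \emph{pair.} Indeed, suppose instead the global maximum sat in the smaller-sum pair $\{k,l\}$, so that the other pair $\{i,j\}$ satisfies $X_i+X_j>X_k+X_l$. For the edge between $t_k$ and $t_l$ the quantities entering \eqref{sqGTI} are $\sqrt{X_i},\sqrt{X_j},\sqrt{X_k+X_l}$, and using that (say) $X_k$ is the global maximum one checks directly that $\sqrt{X_k+X_l}\geq\sqrt{X_k}\geq\sqrt{X_i},\sqrt{X_j}$, while $\sqrt{X_i}+\sqrt{X_j}\geq\sqrt{X_i+X_j}>\sqrt{X_k+X_l}$. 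Hence all three inequalities of \eqref{sqGTI} hold, the algorithm would already have stopped, a contradiction. Thus I may assume the maximum is $X_4^{(n)}$ with $\epsilon(t_4)=+1$ and partner $t_3$, and the switch $S_4$ is governed by Lemma \ref{ch_e0} with $(i,j,k,l)=(1,2,3,4)$, giving in projective coordinates
$$[X_1', X_2', X_3', X_4'] = \left[X_1, X_2, X_3, \frac{(X_1+X_2-X_3)^2}{X_4}\right],$$
the signs being unchanged when $X_3>X_1+X_2$ and all flipped when $X_3<X_1+X_2$.

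Next I would run the case analysis on $X_1+X_2$ versus $X_3$, exactly parallel to the proof of Theorem \ref{eul_pm1} (3). Failure of \eqref{sqGTI} for both same-sign edges forces the strong domination $\sqrt{X_4}>\sqrt{X_3}+\sqrt{X_1+X_2}$ together with $\sqrt{X_3+X_4}>\sqrt{X_1}+\sqrt{X_2}$; in particular $X_4>X_1+X_2+X_3$, so that the new switched entry satisfies $X_4'=\tfrac{(X_1+X_2-X_3)^2}{X_4}<X_1+X_2+X_3$, i.e. the maximum drops out of the switched slot. In most resulting configurations one then verifies that the new coordinates satisfy \eqref{sqGTI} for one of the same-sign edges and the algorithm halts. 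The only way to keep running is to land repeatedly in a single recurring configuration, the analogue of Subcase 3 of the previous proof, in which the maximum keeps reappearing and the signs keep alternating between the two pairings.

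The main obstacle, and the heart of the argument, is to rule out this recurring configuration. Here I would project the sequence onto the $2$--simplex $\mathcal{D}=\{(a,b,c)\in(0,1)^3\mid a+b+c=1\}$ by normalizing $(X_1^{(n)}+X_2^{(n)},\,X_3^{(n)},\,X_4^{(n)})$, just as in the $e(\rho)=1$ case, and describe the two sub-regions of $\mathcal{D}$ in which the recurring subcase lives. The essential difference from the earlier computation is that \eqref{sqGTI} involves square roots, so these regions are curvilinear rather than bounded by straight lines, which is the principal technical complication to control. Nonetheless, the recursion estimate shows that a suitable ratio of the projected coordinates is strictly increasing with a multiplicative gap bounded below by a constant strictly greater than $1$ at each step, hence grows geometrically. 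This forces the projected point to leave the recurring region after finitely many steps, contradicting the assumption that the algorithm never stops and thereby proving the Lemma.
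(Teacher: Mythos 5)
Your preliminary reductions are sound, and they reproduce in substance Step 1 of the paper's argument: the check that if the unique maximum sat in the smaller-sum pair then \eqref{sqGTI} would hold for the edge of that pair (so the algorithm would already have stopped), and the resulting strong domination $\sqrt{X_4}>\sqrt{X_3}+\sqrt{X_1+X_2}$, $\sqrt{X_3+X_4}>\sqrt{X_1}+\sqrt{X_2}$, hence $X_4>X_1+X_2+X_3$ (the paper phrases this as $\max(a_n,b_n,c_n,d_n)>\tfrac12$ after normalizing), are all correct and verifiable. The genuine gap is at the heart of the lemma, where you argue purely by analogy with the $e(\rho)=\pm1$ case: you assert, without identifying the ratio or displaying the recursion estimate, that the run reduces to ``a single recurring configuration'' and that some ratio of the projected coordinates grows with a multiplicative gap bounded below by a constant strictly greater than $1$. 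Neither assertion is established, and the second is in fact false in Euler class $0$.

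The reason the analogy breaks is structural. By Lemma \ref{ch_e0} a switch changes only one projective coordinate, $X_l\mapsto (X_i+X_j-X_k)^2/X_l$, and the closure of the region where both tests \eqref{sqGTI} fail contains genuine neutral points of this dynamics: at the normalized configuration $(a,b,c,d)=(\tfrac12,0,0,\tfrac12)$ both square-root triples are degenerate (it lies on the common boundary of the two stopping conditions), and the switch along the maximal slot returns $(\tfrac12+0-0)^2/\tfrac12=\tfrac12$, so the map is the identity there. Hence the per-step growth factor of any natural ratio degenerates to $1$ along orbits approaching this locus, and no estimate of the form $a_{n+1}/a_n\geq 1/(1-2a_0)>1$ (which is what powers the $e=\pm1$ proof, where the regions $\mathcal{L}$ and $\mathcal{R}$ stay away from neutral behavior) can hold uniformly on the bad region. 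Worse for your plan, the paper's proof shows that a putative infinite orbit \emph{necessarily} converges to this neutral locus: the quantity $u_n=|c_n+d_n-\tfrac12|$ is strictly decreasing, its only fixed values are $0$ and $\tfrac12$, so $u_n\to 0$; the orbit creeps toward the boundary of the stopping region without ever entering it. The contradiction in the paper is therefore not a geometric escape at all, but the incompatibility between two facts: once $u_n$ is small the two never-switched coordinates $b_n,c_n$ must tend to $0$ (Step 4), while the switch formulas force them to be increasing. (Your intermediate claim that continuation confines the orbit to one recurring configuration is also unjustified: the maximum can migrate between the two sign-pairs along the run, which the paper only controls a posteriori, after $u_n<\varepsilon$.) To repair your argument you would have to abandon the uniform-gap mechanism and replace it by a Lyapunov-type monotone quantity with a fixed-point analysis --- which is precisely what the paper's proof does.
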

	
	\begin{proof}
		Assume by contradiction that we have an infinite sequence $\cT_n$ constructed by the trace reduction algorithm. Let $(a_n , b_n , c_n , d_n)$ be the normalized variables:
		$$(a_n, b_n, c_n , d_n) = \frac{1}{X_1^{(n)}+ X_2^{(n)}+  X_3^{(n)}+ X_4^{(n)} } (X_1^{(n)}, X_2^{(n)},  X_3^{(n)}, X_4^{(n)}),$$
		so that $a_n + b_n + c_n + d_n = 1.$ We let 
		\begin{align*}
			h_n & = \max \{ \sqrt{a_n} - \sqrt{b_n} - \sqrt{c_n+d_n} , \sqrt{b_n} - \sqrt{a_n} - \sqrt{c_n+d_n}, -\sqrt{a_n} - \sqrt{b_n} + \sqrt{c_n+d_n} \}, \text{ and}\\
			k_n & =  \max \{ \sqrt{c_n} - \sqrt{d_n} - \sqrt{a_n+b_n} , \sqrt{d_n} - \sqrt{c_n} - \sqrt{a_n+b_n}, -\sqrt{c_n} - \sqrt{d_n} + \sqrt{a_n+b_n} \}.
		\end{align*}
		By hypothesis, the quantities $(a_n , b_n , c_n , d_n)$ do not satisfy the inequalities \eqref{sqGTI}, so for all $n \in \N$ we have that $k_n > 0$ and $h_n> 0.$ We will derive a contradiction in several steps.
		
		\textbf{Step 1}: \textit{For all $n \in \N,$ $\max (a_n , b_n , c_n , d_n ) > \frac 12.$} 
		
		Assume without loss of generality that $d_n = \max (a_n , b_n , c_n , d_n ).$  
		
		In that case, $h_n = \sqrt{c_n+d_n}  -\sqrt{a_n} -\sqrt{b_n} > 0 ,$ by hypothesis. So $\sqrt{c_n+d_n} > \sqrt{a_n} + \sqrt{b_n},$ and hence $\sqrt{c_n} + \sqrt{d_n} > \sqrt{a_n + b_n}.$ From this we deduce that $k_n = \sqrt{d_n} - \sqrt{c_n} - \sqrt{a_n + b_n}.$  By hypothesis, we have that $k_n > 0$ so $\sqrt{d_n} > \sqrt{c_n} + \sqrt{a_n + b_n} .$ 
		
		Using this last inequality we get $d_n > a_n + b_n + c_n = 1-d_n$ and hence $d_n > \frac 12.$
		
		\textbf{Step 2}: \textit{The sequence defined by $u_n = |a_n + b_n - \frac 12 | = |c_n + d_n - \frac 12|$ is decreasing.}
		
		Assume without loss of generality that $d_n =\max (a_n , b_n , c_n , d_n ),$ so $c_n + d_n > \frac 12.$ It suffices to show that:
		$$ 1- (c_n + d_n) < c_{n+1} + d_{n+1} < c_n + d_n.$$
		
		We first prove the right inequality. From Lemma \ref{ch_e0}, we obtain:
		$$ (a_{n+1} , b_{n+1} , c_{n+1} , d_{n+1} ) = M_n \left( a_n , b_n , c_n , \dfrac{(a_n+b_n-c_n)^2}{d_n}, \right)$$
		with $M_n = \dfrac{1}{a_n + b_n + c_n + \frac{(a_n+b_n-c_n)^2}{d_n}} > 1.$ From this we obtain that $a_{n+1} > a_n,$ $b_{n+1} > b_n$ and $c_{n+1}> c_n.$ Hence in this case $c_n+d_n = 1-(a_n + b_n) < 1- (a_{n+1}+b_{n+1}) = c_{n+1} + d_{n+1}$ and the right inequality is proven.
		
		For the left inequality, we consider the quantity:
		\begin{align*} 
			a_n+b_n+a_{n+1}+b_{n+1} 
				& = a_n + b_n + \dfrac{a_n + b_n}{a_n + b_n + c_n + \frac{(a_n+b_n-c_n)^2}{d_n} } \\
				& = 1 - (c_n + d_n) + \dfrac{d_n ( 1- c_n - d_n)}{d_n(1-d_n) + (1-2c_n-d_n)^2} \\	
				& = 1 - \dfrac{4c_n^3+4c_n^2 (2d_n-1) + c_n (2d_n-1)^2}{d_n(1-d_n) + (1-2c_n-d_n)^2 } < 1 .
		\end{align*}
		The last term is lower than $1$ because $d_n > \frac 12.$
		
		Hence $c_{n+1} + d_{n+1} > 1 - (c_n + d_n)$ and the left inequality is also proven.
		
		\textbf{Step 3}: \textit{$\displaystyle \lim_{n\to \infty} u_n = 0.$}
		
		The sequence $(u_n)$ is decreasing and bounded below by $0$ so it converges towards a fixed point. 
		
		Assume that $u_{n+1} = u_n$ and that $d_n = \max (a_n , b_n, c_n, d_n).$ Then $|c_n + d_n - \frac 12| = |c_{n+1} + d_{n+1} - \frac 12 |.$ There are two cases:
		\begin{itemize}
			\item If $c_n + d_n - \frac 12= c_{n+1} + d_{n+1} - \frac 12$ then we obtain after computation
			$$(c_n+d_n-1)(1-2c_n)(1-2(c_n-d_n))=0.$$
			Hence we get $c_n + d_n = 1$  or $c_n + d_n = \frac 12.$
			\item if $c_n+d_n - \frac 12 = \frac 12 - (c_{n+1} + d_{n+1})$ then we obtain after computation
			$$c_n (1-2(c_n+d_n))^2  = 0.$$
			Hence we get $c_n = 0$ or $c_n + d_n = \frac 12.$
		\end{itemize}
		
		The only fixed points are $0$ and $\frac 12.$ But the sequence $(u_n)$ is decreasing and bounded above by $\frac 12$ so it's either constant and equal to $\frac 12$ or converging towards $0.$ If $u_n = \frac 12$ then it means that at least two among $a_n , b_n , c_n , d_n$ are zero which is excluded.

		\textbf{Step 4}: \textit{$\displaystyle \lim_{n\to \infty}  \max (a_n , b_n) = 1/2 = \lim_{n\to \infty} \max (c_n , d_n).$ Similarly, $\lim_{n\to \infty} \min (a_n , b_n) = 0 = \lim_{n\to \infty} \min (c_n , d_n).$}
		
		Let $\varepsilon >0.$ Then for $n$ large enough $u_n < \varepsilon.$ Suppose  without loss of generalities, that for such an $n$ we have $d_n = \max (a_n , b_n , c_n , d_n).$  As $d_n > \frac 12,$ we have easily that $d_n < \frac 12 + \varepsilon$ and that $c_n < \varepsilon.$ Similarly, as $\sqrt{c_n+d_n} > \sqrt{a_n}+\sqrt{b_n}$ we have that $\sqrt{a_n b_n}< \varepsilon$ so that $\min (a_n , b_n) < \varepsilon.$ This gives the desired result.
		
		\vspace{.5in}
		
		We can now derive the final contradiction from this last fact. Let $\epsilon >0$ be small enough and $n_0$ such that for all $n > n_0$ we have $u_n < \epsilon.$
				
		 Assume without loss of generality that $b_{n_0} < \varepsilon$ and $c_{n_0}< \varepsilon.$ Then we see that for all $n > n_0,$ $b_n$ and $c_n$ are always the minimum and hence are converging towards $0.$ On the other hand, the formulas show that the sequences $b_n$ and $c_n$ should be increasing which is a contradiction.	\end{proof}

     \section{Ergodicity}\label{sec:ergodicity}

     In this section, we consider the ergodicity of the action of the mapping class group on different components of $\mathfrak X(N_{1,3}).$

\subsection{Euler class  $e(\rho)=0.$}

In this section we will prove the following theorem.

     \begin{Theorem}
     	The pure mapping class group action is ergodic on each component of $\mathfrak{X}_0 (N_{1,3} ).$
     \end{Theorem}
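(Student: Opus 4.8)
The plan is to reduce the statement to an ergodicity property of the subgroup $H = \langle \theta_1, \theta_2, \theta_3, \theta_4 \rangle$ generated by the four triangle--switch involutions, which by Section \ref{back_tri} has finite index in $\mathrm{Mod}(N_{1,3})$. Since every $\mathrm{Mod}(N_{1,3})$--invariant measurable set is a fortiori $H$--invariant, it suffices to prove that $H$ acts ergodically on each component $\mathfrak{X}_0^{s}(N_{1,3})$. I would work throughout in the triangle coordinates $[X_1:X_2:X_3:X_4]$ of Lemma \ref{lem_phi}, in which, by Corollary \ref{corol_comp}, the chosen component is an open dense subset of the projectivized positive orthant cut out by the relevant sign inequalities, and in which each $\theta_l$ is computed from the change--of--coordinate formula of Lemma \ref{ch_e0} followed by the relabelling identifying $\cT'$ with $\cT$.

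First I would pin down the invariant measure. Passing to logarithmic coordinates $y_l = \log X_l$ on the projective slice, the projective switch, which replaces a single coordinate $X_l$ by $(X_i+X_j-X_k)^2/X_l$ while fixing the other three, becomes the affine reflection $y_l \mapsto 2\log|X_i+X_j-X_k| - y_l$ in that one coordinate, with Jacobian $-1$. Hence each $\theta_l$, and therefore all of $H$, preserves the Lebesgue measure $dy$; since $dy_l = dX_l/X_l$, this measure is mutually absolutely continuous with the smooth (symplectic) measure class on the component. Proving ergodicity for $dy$ therefore establishes the theorem, and I may freely discard the measure--zero set of representations for which some balanced triangulation fails to be $\rho$--admissible.

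The heart of the argument is the ergodicity of $(H, dy)$. The structural input I would exploit is that, as recorded in the proof of Proposition \ref{equiv_tri}, the switch $\theta_l$ fixes the three one--sided curves $\gamma_j$ with $j \neq l$ and alters only $\gamma_l$; consequently $\theta_l$ preserves each trace function $w_j = \mathrm{tr}\,\rho(\gamma_j)$ for $j \neq l$ and acts on $w_l$ as a Vieta involution, namely the flip to the second root of the single polynomial trace relation $F(w_1,w_2,w_3,w_4) = c$ that cuts out the three--dimensional variety $\mathfrak{X}(N_{1,3})$, where $F$ is quadratic in each variable and $c$ is fixed by the cusp signs $s$. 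This realizes the $H$--action as a Markov--type system, closely analogous to the one--holed torus and four--holed sphere cases treated by Goldman and by the third author in \cite{yan_ont}. I would then run a Fubini scheme: the subgroup $\langle \theta_3, \theta_4 \rangle$ preserves the fibration by common level sets of $(w_1, w_2)$, so if it acts ergodically on almost every fibre then any $H$--invariant $f$ depends only on $(w_1, w_2)$; symmetrically $\langle \theta_1, \theta_2 \rangle$ forces $f$ to depend only on $(w_3, w_4)$; since the relation $F=c$ couples the two pairs nondegenerately, this forces $f$ to be constant. The fibre dynamics is a low--dimensional two--involution system, whose ergodicity I would establish by hyperbolicity and distortion estimates together with a Hopf--type argument, paying particular attention to the fact that for $e(\rho)=0$ the relevant real level sets are non--compact.

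The main obstacle is precisely this last step. Because Theorem \ref{eul_pm1}(4) shows that Bowditch's question has a negative answer on $\mathfrak{X}_0(N_{1,3})$, the March\'e--Wolff strategy of transporting ergodicity from a topological conjugacy with the geodesic flow is unavailable, so the fibre ergodicity must be proved directly. The difficulties are to control the distortion of the genuinely nonlinear Vieta maps, to verify recurrence and expansion on the non--compact level sets arising in Euler class zero, and to confirm that the coupling through $F$ really rules out nonconstant invariants depending on a single pair of traces. I expect the explicit trace reduction algorithm of Section \ref{eul_0}, which already organizes the switch dynamics into a finite coding toward the region where \eqref{sqGTI} holds, to supply the symbolic model needed to carry out these estimates.
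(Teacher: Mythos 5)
Your reduction to the subgroup generated by the four switch involutions, the measure--class bookkeeping, and the identification of each switch as a Vieta--type involution fixing the other three trace/triangle coordinates are all sound, and they parallel the structure the paper actually uses. But the heart of your argument --- ergodicity of the two--involution system on almost every fibre --- is exactly the step you leave unproven, and the mechanism you propose for it is wrong. You plan to get fibre ergodicity ``by hyperbolicity and distortion estimates together with a Hopf--type argument,'' verifying ``recurrence and expansion on the non--compact level sets.'' On a fibre where the relevant trace lies outside $[-2,2]$, the invariant conic is non--compact and the composite twist $\tau_{i,j}=S_i\circ S_j$ acts on it as a translation--type (hyperbolic) map: every orbit is proper, there is no recurrence, and the action is \emph{not} ergodic on such fibres, so no Hopf argument can be run there. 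The paper's proof exploits ellipticity, not hyperbolicity: by Theorem \ref{thm_eul_0}, every class in a component of $\mathfrak{X}_0(N_{1,3})$ can be moved by the mapping class group into the region where a distinguished curve $\gamma_{1,2}$ has trace in $(-2,2)$ (the inequalities \eqref{sqGTI}); there the invariant conics $E_{a,k}$ with $k\in(-2,2)$ are compact ellipses, and $\tau_{3,4}=S_3\circ S_4$ acts on each as a rotation through an angle $\theta_k$ depending nonconstantly on $k$, hence is ergodic (an irrational rotation) on almost every ellipse. This is elementary and needs no distortion theory; your instinct that the trace reduction algorithm of Section \ref{eul_0} is the relevant input is correct, but its role is to certify ellipticity of one twist, not to provide a symbolic model for hyperbolic dynamics.

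There is a second gap in your Fubini scheme: the component is three--dimensional, so constancy along the fibres of two transverse one--dimensional foliations (your two pairs of involutions) kills only two directions, and your appeal to ``the coupling through $F$'' to rule out invariants depending on a single pair of traces is unsubstantiated as stated. The paper instead produces \emph{three} curve families through a generic point $p$ of the region $R$ whose gradients span the tangent space: besides the ellipses $E_{a,k}$, it conjugates by the switch $S_1$ to obtain curves $Q_1=S_1(E_{a',k'})$ invariant under $\tau_{1,3}\tau_{4,1}=S_1\tau_{3,4}S_1$ (and similarly $Q_2$), and it must \emph{verify} by the explicit computation
\begin{equation*}
k'+2 \;=\; (k+2)\,\frac{\bigl(c+d-(1-a)\bigr)^2}{a^2}
\end{equation*}
that the conjugated conics remain in the elliptic regime $k'\in(-2,2)$ on $R$, using $|2a-1|<c+d<1$. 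Any completion of your outline would need an analogous check that the auxiliary fibres stay elliptic after conjugation --- precisely the quantitative step your proposal omits, and the place where the restriction to the open region $R$ (reached via transversality of $E_{a,k}$ to the locus $c+d=1$) enters the paper's proof.
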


     \begin{proof}
     We prove that the action of the mapping class group is ergodic on $\mathfrak{X}_0^{s_1^+} (N_{1,3} ).$ The other five components can be done in the same way by symmetry. Choose a balanced triangulation $\cT.$ From Section \ref{sec:conn} we know that $ (\Delta^{1,2,-} \times \{ \epsilon_{1,2} \}) \cup (\Delta^{1,2,+} \times \{ \epsilon_{3,4} \} ) $ is diffeomorphic to an open dense subset of $\mathfrak{X}_0^{s_1^+} (N_{1,3} ).$ So we can identify 
     $$\Delta^{1,2} = \{(x_1,x_2,x_3,x_4) \in \Delta | x_1+x_2 \neq x_3+ x_4 \} $$ 
     as an  open  and dense subset of $\mathfrak{X}_0^{s_1^+} (N_{1,3} ).$  For convenience, we consider a different embedding $i : \Delta \rightarrow \R_{>0}^4$ given by 
     $$i((x_1 , x_2 , x_3 , x_4)) = (\frac{x_1}{x_1+x_2},\frac{x_2}{x_1+x_2}  , \frac{x_3}{x_1+x_2} , \frac{x_4}{x_1+x_2}).$$
     Denote by $\Omega_{1,2}$ the image $i(\Delta^{1,2}).$ Then 
     $$\Omega_{1,2} = \{ ( a , 1-a , c , d ) \in (0,1)^2\times \R_{>0}^2 \, | \, c+d \neq 1 \}.$$

     We prove ergodicity by proving that any measurable function $F\co \Omega_{1,2}\rightarrow \R$ which is invariant by the group generated by triangle switches is almost everywhere constant. Let $\rho \in \mathfrak{X}_0^{s_1^+} (N_{1,3}).$ Using Theorem \ref{thm_eul_0}, there exists a $2$-sided simple closed curve which is sent to a non-hyperbolic element. Up to the mapping class group action, we can assume that this curve is the curve $\gamma_{1,2}$ associated to the triangulation $\cT.$  So we have that 
     $$|\mathrm{tr} (\rho' (\gamma_{1,2} ) ) | = \dfrac{(a+b-(c+d))^2 - 2 cd}{cd} < 2.$$
 This implies that the image of $\rho$ in $\Delta_{1,2}$ is in the domain 
     $$L = \{ (a,1-a, c, d) \in \Omega_{1,2} , (c+d-1)^2 < 4 cd \}.$$
The action of the triangle switches $S_3$ and $S_4$ on $\Omega_{1,2}$ are given by 
     $$S_3 (a , 1-a, c , d) = \left( a , 1-a , \frac{(d-1)^2}{c} , d \right) \quad \mbox{ and } \quad S_4 (a , 1-a, c , d) = \left( a , 1-a , c, \frac{(c-1)^2}{d} \right).$$
     We can see that the sets 
     $$E_{a,k} = \{ ( a, 1-a , c , d ) \in \Omega_{1,2} | (c+d-1)^2=(k+2) cd \} $$
     are invariant by $S_3$ and $S_4.$  For $k \in (-2 , 2),$ such a set is an ellipse. If we denote by $\tau_{3,4} = S_3 \circ S_4,$ the Dehn twist along the curve $\gamma_{3,4};$ we get that $\tau_{3,4}$ acts on $E_{a,k}$ as a rotation of angle $\theta_k$ depending only on $k.$ Hence, for almost every $k \in (-2 , 2)$ the $\langle \tau_{3,4} \rangle$-action is ergodic on $E_{a,k}.$ 

     As the ellipse $E_{a, k}$ is transverse to the line $c+d=1$  at $(0,1)$ and $(1,0)$, there exists points in the $\tau_{3,4}$-orbit of $\rho$ in the domain $\{ ( a, 1-a , c , d ) \in \Omega_{1,2} | |2a-1|< c+d <  1 \}.$ Hence up to the action of the mapping class group, we can assume that $\rho$ satisfies this condition.
     
     Let $p = (a,1-a , c , d)$ be any point in the domain $R$ defined by
     $$R = \left\{ (a,1-a , c , d) \, | \, (c+d-1)^2 < 4 cd   \mbox{ and } |2a-1| <c+d< a+b \right\}.$$
     The point $S_1 (p)=  \displaystyle i \left( M \left( \frac{(c+d-(1-a))^2}{a} , 1-a , c , d \right) \right)$ belongs to a unique curve $E_{a', k'}$ which is invariant by the element $\tau_{3,4}.$ We denote by $Q_1 = S_1 (E_{a',k'} )$ and we note that $p \in Q_1.$ Moreover, the element $\tau_{1,3} \tau_{4,1} = S_1 \tau_{3,4} S_1$ preserves the curve $Q_1.$ So we infer that 
     \begin{align*} 
     	k'+2 & =\dfrac{\left(c'+d'-1 \right)^2}{c'd'}  \dfrac{\left(c+d-(\frac{(c+d-(1-a))^2}{a} + (1-a) )\right)^2}{cd} \\
     		&= \dfrac{(c+d-1)^2}{cd} \dfrac{(c+d-(1-a))^2}{a^2} = k \dfrac{(c+d-(1-a))^2}{a^2}.
     \end{align*}

     As $|2a-1| < c+d < 1$ because $p \in R$,  we have that $|(c+d-(1-a))| < a$ and hence $k' \in (-2, 2).$ This implies that the action of $\tau_{3,4}$ is ergodic on $E_{a', k'}$ for almost every $k'$ and consequently the action of $\langle \tau_{1,3} \tau_{4,1} \rangle$ is ergodic on $Q_1$ for almost every point $p \in R.$ With a similar argument, we prove that that the group generated by the element $ \tau_{2,3} \tau_{4,2} $ acts ergodically on a  curve $Q_2$ passing through $p$ for almost every $p \in R.$

     The gradients of the three curves $E_{a,k}, Q_1 , Q_2$ span the tangent space of $\Omega_{1,2}$ at $p.$ By ergodicity of the action on each of these three curves, we deduce that the restriction of $F$ to the domain $R$ is almost everywhere constant. This concludes the proof of ergodicity.
     \end{proof}

\subsection{Euler class $e(\rho) = \pm 1$}\label{ergodicity2}

In this section we will prove the following result.

     \begin{Theorem}
     	Let $s^+=(+1,+1,+1)$ and $s^+=(-1,-1,-1),$  then the mapping class group action is ergodic on the connected components $\mathfrak{X}_{1}^{s^+} (N_{1,3} )$ and $\mathfrak{X}_{-1}^{s^-} (N_{1,3} ).$
     \end{Theorem}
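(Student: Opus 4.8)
By symmetry it suffices to treat $\mathfrak{X}_1^{s^+}(N_{1,3})$; the component $\mathfrak{X}_{-1}^{s^-}(N_{1,3})$ is handled identically after reversing all signs and the sign of the Euler class. The plan is to follow the template of the Euler class zero case just treated. First I would fix a balanced triangulation $\cT$ and use Lemma \ref{lem_phi} together with Corollary \ref{corol_comp} to identify an open dense subset of $\mathfrak{X}_1^{s^+}(N_{1,3})$ with an explicit $3$--dimensional region $\Omega$ of normalized triangle coordinates carrying the sign pattern $\epsilon_i.$ The goal is then to show that every $\mathrm{Mod}(N_{1,3})$--invariant measurable function $F\co\Omega\to\R$ is almost everywhere constant. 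The first step is a normalization using Theorem \ref{eul_pm1} (3): every $\rho$ sends some non-peripheral $2$--sided simple closed curve to a non-hyperbolic element, so after acting by a suitable mapping class we may assume this curve is one of the distinguished curves $\gamma_{i,j}$ of $\cT.$ By Lemma \ref{curves_e1} this places the coordinates of $\rho$ in the explicit region where $|\mathrm{tr}\rho(\gamma_{i,j})|<2$, i.e.\ where the generalized triangle inequalities \eqref{GTI} hold.

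The engine of the argument is the action of the Dehn twists $\tau_{k,l}=S_k\circ S_l$ along the $2$--sided curves $\gamma_{k,l}$, read off in coordinates through the change of coordinate formulas of Lemma \ref{ch_e1}. Such a twist preserves the level sets of the trace function $\mathrm{tr}\rho(\gamma_{k,l})$ computed in Lemma \ref{curves_e1}; in the normalized coordinates these level sets are conics, and on the elliptic range they are ellipses $E_{a,k}$ on which $\tau_{k,l}$ acts as a rotation whose angle depends only on the level, exactly as in the Euler class zero argument. For almost every value of the level parameter this angle is irrational, so $\langle\tau_{k,l}\rangle$ acts ergodically on the corresponding ellipse, and hence $F$ is almost everywhere constant along almost every such leaf.

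To promote leafwise constancy to global constancy I would produce, through a generic point $p\in\Omega$, three transverse families of such invariant ellipses. As in the preceding proof, one family is furnished directly by a twist $\tau_{k,l}$, while two further families are obtained by conjugating it by a triangle switch $S_m$ (so that $S_m\tau_{k,l}S_m$ is again a product of Dehn twists preserving a conic through $p$), each carrying its own ergodic rotation action for almost every leaf. If the gradients of the three conic families are linearly independent at $p$, then the ergodicity of the three rotation actions forces $F$ to be constant along three directions spanning $T_p\Omega$, hence constant near $p$; as this holds for almost every $p$, the function $F$ is almost everywhere constant, which is the asserted ergodicity.

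The main obstacle, as in the Euler class zero case, is the bookkeeping that makes the three rotation actions genuinely ergodic and independent. Concretely, after applying the switch $S_m$ one must verify that the new elliptic parameter remains inside the open interval $(-2,2)$ on a positive measure set of leaves through $p$ — the analog of the estimate $k'\in(-2,2)$ used before — and here this is more delicate because the $e=+1$ switch formulas of Lemma \ref{ch_e1} are genuinely more complicated than the $e=0$ formulas of Lemma \ref{ch_e0}. One must also confirm the linear independence of the three conic gradients at a generic $p.$ Granting these two computations, the argument closes exactly as in the Euler class zero case.
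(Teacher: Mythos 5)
Your high-level skeleton (normalize via Theorem \ref{eul_pm1} (3) to reach an elliptic curve, Dehn twists acting as rotations on invariant ellipses, leafwise ergodicity for almost every leaf, three transverse families spanning the tangent space) is indeed the strategy of the paper, but your execution in normalized triangle coordinates has a genuine gap at its central step --- and it is precisely the step the paper avoids by changing coordinates. In the $e(\rho)=0$ proof the whole mechanism rests on the fact that, by Lemma \ref{ch_e0}, a triangle switch acts projectively as $[X_i,X_j,X_k,X_l]\mapsto[X_i,X_j,X_k,(X_i+X_j-X_k)^2/X_l]$, fixing three of the four projective coordinates; this is what makes the normalized slice $(a,1-a,c,d)$ invariant and confines the twist $\tau_{3,4}=S_3\circ S_4$ to the one-parameter conic family $E_{a,k}$. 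For $e(\rho)=\pm 1$ (Lemma \ref{ch_e1}) each $X_i$ with $i\neq l$ is multiplied by a \emph{different} factor $|-\epsilon(t_i)X_i+\epsilon(t_j)X_j+\epsilon(t_k)X_k|$, so no coordinate is preserved even projectively, and moreover the sign pattern $\epsilon$ itself can jump among the $\epsilon_i$'s under a switch, so your region $\Omega$ ``carrying the sign pattern $\epsilon_i$'' is not invariant. Consequently the level set of the single trace function of Lemma \ref{curves_e1} is a two-dimensional subset of the three-dimensional projectivized coordinate space, not a curve: to cut it down to an invariant ellipse you need a second twist-invariant function (the traces of the one-sided curves fixed by the twist), and in triangle coordinates these are complicated expressions obtained through formula \eqref{square}, not coordinate slices. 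So the claim that the twist acts ``as a rotation on ellipses \dots exactly as in the Euler class zero argument'' is exactly what fails as stated, and the two items you defer as bookkeeping are the actual content of the theorem.

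The paper sidesteps all of this by abandoning triangle coordinates for these two components. It lifts $\rho$ to $\mathrm{ISL}(2,\R)\subset\mathrm{SL}(2,\C)$, takes the purely imaginary traces $(ia,ib,ic,id)$ of the four one-sided generators together with the peripheral traces $x,y,z$, uses the Benedetto--Goldman sign analysis plus a choice of lift to normalize $x=y=z=2$, and lands on the quadric $(a+b-(c+d))^2=(ab+4)(cd+4)$ (and its two symmetric rewritings). In these coordinates each involution $\theta_i$ fixes three of the four trace coordinates outright, so the coordinate slices $E_{c,d}$ are manifestly invariant ellipses once $cd+4\in(0,4)$, which Theorem \ref{eul_pm1} (3) guarantees after a mapping class group move; density of the rotation orbit on this first ellipse is then used to reach an open region where $ab+4$ and $ac+4$ also lie in $(0,4)$, producing three ergodic ellipse families lying in three distinct coordinate $2$-planes, whose transversality is immediate. (Note also that the paper obtains the second and third families from three different twists directly, not by conjugating one twist by a switch as you propose.) If you tried to complete your version, you would in effect have to reconstruct this trace picture inside the $X_i$-coordinates, at which point you would have reproved the paper's change of variables.
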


     \begin{proof} We will use a different parametrization of the character variety, and we will work with trace coordinates of matrix representatives. Let $\cT$ be a triangulation. We choose a presentation $$\pi_1 (N_{1,3}) = \{ \g_1 , \g_2 , \g_3 , \g_4 | \alpha_1 \alpha_2 \alpha_3 \alpha_4 \},$$ where the generators $\g_1 , \g_2 , \g_3 , \g_4$ are the four one-sided curves associated to $\cT$, see Figure \ref{fig:onesided}. We can define a representative in $\mathrm{Aut} (\pi_1 (N_{1,2} ))$ of one of these involution as 
     $\theta_4 (\g_i) = \g_i^{-1}$ for $i \neq 4$ and $\theta_4 (\alpha_4) = \g_3 \g_2 \g_1.$ The other involutions are obtained by a cyclic permutation of indices $(1,2,3,4).$ The product of two of these involutions corresponds to a Dehn twist about the $2$--sided simple closed curves in $N$ associated to the pair of $1$--sided curves given by the two involutions. In addition, these involutions corresponds to the triangle switches discussed in Section \ref{back_tri}. 

     Let $\rho \co \pi_1 (N_{1 , 3}) \rightarrow \mathrm{PGL}(2, \R)$ be a type-preserving representation with Euler class $e (\rho) = 1.$ As $\pi_1 (N_{1,3})$ is a free group, one can choose a lift $\widetilde{\rho}$ of the representation into $\mathrm{ISL}(2 , \R) = \mathrm{SL}(2 , \R) \sqcup i \mathrm{SL}^-(2, \R) \subset \mathrm{SL}(2, \C)$
     where orientation reversing isometries are sent to elements of $i \mathrm{SL}^-(2, \R) .$ We denote by $A, B, C \in i \mathrm{SL}^-(2, \R)$ the elements $\widetilde{\rho}(\alpha) , \widetilde{\rho}(\beta) , \widetilde{\rho}(\gamma).$ The three boundary component of $N_{1,3}$ are represented by $\alpha\beta$, $\beta \gamma$ and $\gamma \delta$ in $\pi_1 (N_{1,3}).$ We denote by $X, Y , Z$ the matrices in $\mathrm{SL}(2 , \R)$ corresponding to $AB$, $BC$ and $CA.$

     We denote by $(ia,ib,ic,id, x, y, z) \in (i\R)^4 \times \R^3$ the traces of the matrices $A,B,C,D,X,Y,Z$, where $a,b,c,d,x,y,z$ are real numbers. As $\rho$ is a representation of the free group in three generators into $\mathrm{SL}(2,\C)$, the coordinates satisfy the following equation 

     \begin{equation}\label{eqn_trpgl}
     -a^2-b^2-c^2-d^2+x^2+y^2+z^2+(ab+cd)x+(ad+bc)y+(ac+bd)z+abcd+xyz-4=0.
     \end{equation}

     As $\rho$ is a type-preserving representation, we have that $x ,y ,z = \pm 2.$ To determine the relative sign of $x$, $y$ and $z$, we consider $S'$ the embedded four-holed sphere obtained by cutting the surface $N_{1,3}$ open along the curve $\alpha.$ This induces a representation $\rho'\co \pi_1 (S') \rightarrow \mathrm{PSL}(2 , \R)$, and we have the equality $e(\rho) = e(\rho').$  The boundary components of $S'$ are $\alpha^2 , \alpha\beta , \beta\gamma$, and $\gamma \alpha.$ The traces of the images of these boundary curves by the representation $\rho'$ are $(-a^2 -2 , x,y,z).$

     As $e(\rho') = \pm1$, following Benedetto-Goldman\,\cite{ben_the}, we have that $(-a^2-2)xyz < 0$, which implies that $xyz >0.$ The different lifts of $\rho$ differ by an action of the central character which multiply the image of one of the generator by $-I.$ For example, choosing $-A$ instead $A$ changes the trace coordinates as $(ia, ib, ic, id, x, y, z) \mapsto (-ia, ib, ic, -id, -x, y, -z).$

     Therefore, up to choosing a different lift, we can assume without loss of generalities that  $x = y = z = 2.$ Equation \eqref{eqn_trpgl} becomes 
     $$abcd+16= a^2+b^2+c^2+d^2-2(ab+ac+ad+bc+bd+cd).$$
     This equation can be written in one of the three equivalent ways that we will use :
     \begin{align*}
     	(a+b-(c+d))^2 & =(ab+4)(cd+4), \\
     	(a+c-(b+d))^2 & = (ac+4)(bd+4), \\
     	(a+d-(b+c))^2 & = (ad+4)(bc+4).
     \end{align*}

     Theorem \ref{eul_pm1} (4) implies that up to the action of the mapping class group, at least one of the six curves $\alpha \beta^{-1}, \beta \gamma^{-1} , \gamma \alpha^{-1}, \alpha^2 \beta \gamma, \beta \alpha \beta \gamma$ and $ \alpha \beta \gamma^2$ has its trace in the interval $(-2 , 2).$ The traces of these six $2$-sided curves are given by $-(ab+2), -(cd+2), -(bc+2), -(ad+2), -(ac+2)$ and $-(bc+2)$ respectively.

     By symmetry of the problem in the different generators, we can assume without loss of generality that $cd+4 \in (0,4)$, and so $c$ and $d$ have opposite signs. In that case, the set 
     $$E_{c,d} = \{ (x, y, z, w) \in \R^4 | (x+y-(z+w))^2=(xy+4)(zw+4), z= c, w=d \}$$
     is an ellipse that is invariant by the action of the involutions $\theta_a$ and $\theta_b.$ The composition $\theta_a \circ \theta_b$ is the Dehn twist along the curve $\alpha \beta \gamma^2$, and acts as a rotation of angle $\theta_{a,b}$ depending only on $a$ and $b.$ This means that the group generated by this Dehn twist acts ergodically on the ellipse $E_{c,d}$ for almost every value of $a, b.$ The orbit of the representation will be dense in $E_{c,d}.$

     This ellipse intersects the hyperplane defined by $a=0$ transversally. So, there is an open region intersecting the ellipse $E_{c,d}$ such that $a$ is small enough and of the desired sign, so that $ab +4 \in (0,4) .$ With the same reasoning, this means that the Dehn twist $\theta_c \circ \theta_d$ acts ergodically on the ellipse $E_{a,b}$ for almost every value of $c, d$ in this open region.

     As $c$ and $d$ are of opposite signs and $a$ is small enough, we can also assume that one of $ac+4$ or $ad+4$ is also in $(0,4).$ Without loss of generality, assume that $ac+4 \in (0,4)$ . Once again, it means that the Dehn twist $\theta_b \circ \theta_d$ acts ergodically on the ellipse $E_{a,c}$ for almost every value of $b, d$ in this open region. 

     As the orbit of the representation $\rho$ is dense in $E_{c,d}$, we can assume that $\rho$ belongs to this open region. So we have three different $2$-sided simple closed curves that are sent to elliptic element. For almost all representations, each of the three Dehn twist along these three curves will act ergodically on the corresponding ellipses passing through $\rho.$  These three ellipses are transverse at $\rho$, as they are located in three different two-dimensionnal planes of $\R^4.$ Hence the gradients of these ellipses generate the tangent space at $\rho.$ This implies ergodicity of the mapping class group action on the entire set.
     \end{proof}

     At this point we were not able to prove the ergodicity of the action of the mapping class group on the components $\mathfrak X_1^s(N_{1,3})$ where $s$ contains exactly two $+1$'s and $\mathfrak X_{-1}^s(N_{1,3})$ where $s$ contains exactly two $-1$'s, which we tend to believe is true and deserves further studies.

\end{document}